\newcommand{\im}{\mathrm{im}\,}
\newcommand{\R}{\mathbb{R}}
\newcommand{\persmod}{\mathbb{V}}
\newcommand{\PH}{\mathrm{PH}}
\newcommand{\fib}{\mathrm{PH}^{-1}(D)}
\newcommand{\fibf}{\mathrm{PH}_f^{-1}(D)}
\newcommand{\fibT}{\mathrm{MT}^{-1}(\cellMTree)}
\newcommand{\MT}{\mathrm{MT}}
\newcommand{\LocalMin}{M}
\newcommand{\cellMTree}{T}
\newcommand{\leftChild}{L}
\newcommand{\rightChild}{R}
\newcommand{\Tree}{T}
\newcommand{\tree}{X}
\newcommand{\subtree}{Y}
\newcommand{\epi}{\mathrm{epi}}
\newcommand{\LCA}{\mathrm{LCA}}
\newcommand{\conf}{\mathrm{Conf}}
\newcommand{\critconf}{\mathrm{Conf}_{\mathrm{Crit}}}
\newcommand{\minconf}{\mathrm{Conf}_{\mathrm{Min}}}
\newcommand{\Labelleaf}{l}
\newcommand{\Labelnode}{v}
\newcommand{\conv}{\mathrm{Conv}}
\newcommand{\ShortPath}{\mathrm{ShortPath}}
\newcommand{\ConnectedComp}{\Omega}
\newcommand{\homotopy}{\mathrm{H}}
\newcommand{\LCAmatrix}{\mathcal{M}}
\newcommand\isomto{\stackrel{\sim}{\smash{\longrightarrow}\rule{0pt}{0.4ex}}}
\newtheorem{theorem}{Theorem}
\newtheorem{definition}[theorem]{Definition}
\newtheorem{lemma}[theorem]{Lemma}
\newtheorem{proposition}[theorem]{Proposition}
\newtheorem{corollary}[theorem]{Corollary}
\newtheorem{remark}{Remark}
\newtheorem*{example}{Example}
\newtheorem*{proposition*}{Proposition}
\newtheorem*{rep@proposition}{\rep@title}
\newcommand{\newrepproposition}[2]{
\newenvironment{rep#1}[1]{
 \def\rep@title{#2 \ref*{##1}}
 \begin{rep@proposition}}
 {\end{rep@proposition}}}
\title{The fiber of persistent homology for trees}
\author{David Beers, Jacob Leygonie}
\begin{document}
\maketitle

\begin{abstract}
Consider the space of continuous functions on a geometric tree~$X$ whose persistent homology gives rise to a finite generic barcode~$D$. We show that there are exactly as many path connected components in this space as there are merge trees whose barcode is~$D$. We find that each component is homotopy equivalent to a configuration space on~$X$ with specialized constraints encoded by the merge tree. For barcodes~$D$ with either one or two intervals, our method also allows us to compute the homotopy type of this space of functions.
\end{abstract}

\section{Introduction}
\subsection{Motivation}
Persistent Homology ($\PH$) is a computable descriptor from Topological Data Analysis (TDA) which summarises complex geometric data. More precisely, the persistence map, denoted~$\PH$, takes as input a topological space~$X$ equipped with a real valued function~$f:X\to \R$ and returns a multiset of intervals in the real line called a {\em barcode}, which encodes the topological variations across the sublevel sets of~$f$. In a wide range of situations, persistent homology is robust to perturbations of the input data~\cite{cohen2007stability}, which is one of the key reasons for its successful application to problems in data science, e.g. in neuroscience~\cite{bendich2016persistent}, material sciences~\cite{hiraoka2016hierarchical}, shape recognition~\cite{li2014persistence}, and machine learning~\cite{chen2019topological}.

Complementarily, it is natural to ask how decisively~$\PH$ distinguishes distinct input functions~$f$. Equivalently, we may ask which functions give rise to the same barcode~$D = \PH(f)$. This inverse problem formally translates into studying the fiber~$\fib$ over a target barcode~$D$. The topological and geometric properties of~$\fib$ strongly depend on the underlying space~$X$ and on the space~$\mathcal{F}$ of functions~$f:X\rightarrow \R$ on which persistent homology is defined, which can be for instance the space of {\em filter functions} (or one of its subspaces) when~$X$ is a simplicial or CW complex, the space of Morse functions when~$X$ is a smooth manifold, or simply of continuous functions when~$X$ is merely a topological space.  

For filter functions on a simplicial complex, it was observed in~\cite{leygonie2022fiber} that~$\fib$ has the structure of a finite polyhedral complex. This polyhedral structure was exploited in~\cite{leygonie2021algorithmic} to design an algorithm for computing the homology groups of~$\fib$, and this algorithm was demonstrated on a menagerie of small examples. When~$\mathcal{F}$ is the subspace of filter functions determined by their values on vertices, it was shown that every connected component of~$\fib$ is contractible when~$X$ is a simplicial decomposition of the unit interval~\cite{cyranka2020contractibility}, and homotopy equivalent to a circle when~$X$ is instead a simplicial decomposition of the circle~\cite{mischaikow2021persistent}. 

Cases where~$X$ is non-discrete have also been investigated. For {\em Morse-like} continuous functions on the unit interval, the number of path components of~$\fib$ was computed for generic barcodes~\cite{curry2018fiber}. For Morse functions on the~$2$-sphere~$\mathbb{S}^2$ obtained by composing an embedding of~$\mathbb{S}^2$ in~$\R^3$ with the vertical projection, the tools developed in~\cite{catanzaro2020moduli} motivated conjectures on the number of connected components of~$\fib$. For general Morse functions on an arbitrary smooth compact manifold, it was proven in~\cite{leygonie2022fiber} that the groups of diffeomorphisms of~$X$ isotopic to the identity defines an action on~$\fib$ which is transitive on each connected component. This allowed computing the homotopy type of path components of~$\fib$ for Morse functions on~$1$-dimensional and~$2$-dimensional oriented manifolds.

However, the tools developed in the above literature do not adapt easily to continuous functions on a topological space~$X$ that is not a manifold. In~\cite{mischaikow2021persistent}, it was observed that when~$X$ is a star-like tree and~$D$ is the specific barcode that has only one finite interval, then the path connected components of~$\fib$ are wedges of circles. In this work, we analyze~$\fib$ in the case of an arbitrary generic barcode~$D$, for continuous functions on an arbitrary geometric tree. 

The case of a tree is of particular interest as it is frequently encountered in applications of persistent homology to neuroscience, e.g. for analyzing neuronal morphologies~\cite{kanari2018topological, kanari2019objective} and brain functionalities~\cite{bendich2016persistent}. In fact, a few other related inverse problems for topological descriptors on a tree have already been studied. For instance, statistical and algorithmic inverses of the Topological Morphology Descriptor (TMD) have been described in~\cite{curry2021trees, kanari2020trees}. Another example is the study of the realization problem for barcodes of functions on a tree, which have been investigated in~\cite{johnson2022merge,liu2020realization}.

\subsection{Contributions and outline of contents}
In this work we study the case when~$X$ is the geometric realization of a tree (geometric tree for short),~$\mathcal{F}$ is the space of continuous functions on~$X$, and~$D$ is a finite generic barcode. For this reason,~$X$ denotes any geometric tree for the remainder of the introduction. Our analysis relies upon the fact that for functions~$f$ on~$X$, the persistence map factors in the following way:
$$\xymatrix@1{\PH:  f \ar@{|->}[rr]^-{\MT} && \cellMTree \ar@{|->}[rr] && D }.$$

Here, the intermediate object~$\cellMTree$ is a topological space called the \emph{merge tree} of~$f$, which describes how the connected components of the sublevel sets~$f^{-1}(-\infty,t]$ appear and join together as~$t$ varies. Hence, to characterize the fiber of persistent homology in this setting, we can instead characterize the space of functions with a given merge tree and the space of merge trees that map to~$D$. The main contributions of this work are:
\begin{itemize}
    \item In Theorem~\ref{theorem_merge_tree_are_cellulars}, we provide sufficient conditions for a merge tree derived from a function~$f$ on a topological space to have a cellular structure.
    
    \item In Theorem~\ref{theorem_homotopy_equivalence_configuration_spaces}, we show that~$\MT^{-1}(\cellMTree)$ is homotopy equivalent to a constrained version of the configuration space of~$n$ points on~$X$, denoted~$\conf(X,\cellMTree)$, where the points must satisfy additional constraints determined by~$\cellMTree$.
    \item In Theorem \ref{theorem_fiber_merge_tree_connected}, we show that~$\conf(X,\cellMTree)$, and hence~$\MT^{-1}(\cellMTree)$, is path connected when~$X$ has a branch point. We deduce a 1-1 correspondence between path connected components in the fiber~$\fib$ and non-isomorphic merge trees with barcode~$D$.
    \item We derive two important consequences of the above results for when~$X$ has at least one branch point: (i) in Corollary~\ref{corollary_counting_functions_in_the_fiber}, we find a lower-bound on the distance between the path connected components in~$\fib$, and (ii) in Corollary~\ref{corollary_distance_connected_comp}, we count the number of such components using existing work on merge trees~\cite{curry2018fiber, kanari2020trees}.
\end{itemize}

The paper is organised as follows. 

In Section \ref{sec:background} we formally define the notions of trees, geometric trees, and merge trees. Additionally, we define the notion of a cellular merge tree, a merge tree equipped with a suitable cellular structure. We also formally define persistent homology, describe the relationship between the local minima of a function and its zero dimensional barcode, and detail how the persistence map factors for functions on geometric trees.

In Section \ref{section_when_merge_trees_are_trees_and_metric}, we show that a function on a compact connected space has a cellular merge tree if and only if it has finitely many local minima. Then we define the interleaving distance between merge trees and show that it is a true metric on the subspace of cellular merge trees. Section \ref{section_functions_with_given_merge_tree} is devoted to providing necessary and sufficient conditions for when a function on a geometric tree~$X$ has a given cellular merge tree.

In Section \ref{section_characterizing_MT_inverse} we define the space~$\conf(X,\cellMTree)$ and a few other intermediary configuration spaces constrained by rules determined by~$\cellMTree$. By a series of consecutive homotopy equivalences between these configuration spaces, the section culminates in a proof of Theorem \ref{theorem_homotopy_equivalence_configuration_spaces}, showing that~$\MT^{-1}(\cellMTree)$, the space of continuous functions on~$X$ with merge tree~$\cellMTree$, is homotopy equivalent to~$\conf(X,\cellMTree)$. 

Section \ref{section_topological_consequences} exploits this homotopy equivalence to deduce topological properties of~$\MT^{-1}(\cellMTree)$ and $\fib$ for generic barcodes~$D$. The main result of this section, Theorem~\ref{theorem_fiber_merge_tree_connected}, says that~$\conf(X,\cellMTree)$ and hence~$\MT^{-1}(\cellMTree)$ are connected when~$X$ has at least one branch point, i.e.~$X$ is not homeomorphic to an interval. This then allows us to provide a lower bound on the distance between any two path connected components in~$\fib$ (Corollary~\ref{corollary_distance_connected_comp}), which depends only on the barcode~$D$. In addition, combining Theorem~\ref{theorem_fiber_merge_tree_connected} with existing work enumerating the number of merge trees with a given barcode \cite{curry2018fiber, kanari2020trees}, we deduce in Corollary \ref{corollary_counting_functions_in_the_fiber} that
\[\# \pi_0(\fib)=\prod_{[b,d)\in D} \# \big \{ [b',d') \in D \mid  [b,d) \subset [b',d')\big\}.\]
We conclude by computing the homotopy type of~$\PH^{-1}(D)$ via~$\conf(X,\cellMTree)$ when~$D$ has either one or two intervals. When~$D$ has one interval, we deduce that~$\PH^{-1}(D)$ is contractible (Corollary \ref{corollary_fiber_barcode_1_interval}). When~$D$ has two intervals, Corollary \ref{corollary_fiber_barcode_2_intervals} shows that~$\PH^{-1}(D)$ is homotopic to a wedge of
\begin{equation*}
    -1 + \sum_{v\in N(X)} (\eta(v) - 1)(\eta(v) - 2)
\end{equation*}
circles, where~$N(X)$ is the set of vertices in any triangulation of~$X$, and~$\eta(v)$ is the degree of vertex~$v$.

\section{Background}
\label{sec:background}
\subsection{Trees, merge trees and cellular merge trees}
\label{section_merge_trees}

A {\em tree} is a finite connected acyclic graph. It is {\em binary} if each vertex is the endpoint of at most~3 edges. The {\em geometric realization} of a tree~$\Tree$ is a topological space given by a copy of the interval~$[0,1]$ for each edge in~$\Tree$ with pairs of endpoints quotiented whenever they correspond to the same vertex of~$\Tree$. A {\em geometric tree} is the geometric realization of a tree.

Between any two points on a tree it is well known that there is a unique non self-intersecting path. We refer to this path as the \textit{shortest path} between a given two points. Indeed any other path connecting a given two points contains the shortest path in its image. When~$\Tree$ is geometric, the discussion extends to disjoint closed connected nonempty subsets~$A,B\subseteq \Tree$: there is a unique shortest path~$\ShortPath(A,B)$ connecting them. 

It follows that a subset~$S\subseteq \Tree$ of a geometric tree is path-connected if and only if it is connected. Namely, if~$S$ is not path connected, then the shortest path between~$a$ and~$b$ in~$S$ is not contained in~$S$. Taking~$U_1$ and~$U_2$ to be the connected components of~$T$ minus a point in this shortest path, but not in~$S$, we induce a disjoint open cover of~$S$.

We define the \textit{convex hull} of a collection~$\mathcal{C}$ of closed subsets, denoted~$\conv(\mathcal{C})$, as the union of points on shortest paths between elements of sets in~$C$. Clearly, convex hulls are connected.

A rooted tree~$(\Tree,r)$ is a tree~$\Tree$ with a distinguished vertex~$r$. A \textit{leaf} in a rooted tree is a vertex not equal to~$r$ adjacent to exactly one other vertex. A \textit{branch point} is a vertex adjacent to three or more vertices. If the root~$r$ is adjacent to two or more vertices, then we say that~$r$ is a branch point as well. A choice of root induces an orientation on the edges of any tree~$\Tree$ by the following procedure. We start by directing edges of~$\Tree$ adjacent to~$r$ away from~$r$. Inductively, if an edge between~$v$ and~$v'$ has not yet been oriented but an edge incident to~$v$ has been oriented, we orient the edge between~$v$ and~$v'$ from~$v$ to~$v'$. Whenever there is a directed edge from~$v$ to~$v'$ we say that~$v'$ is a \textit{child} of~$v$. 

In a rooted tree say that a vertex~$v'$ is a \textit{descendant} of~$v$ if there is a directed path from~$v$ to~$v'$, where potentially~$v = v'$. For rooted trees, we denote by~$\LCA(v,v')$ the least common ancestor of~$v$ and~$v'$.

Next, we introduce merge trees. We will use two distinct definitions of merge trees from the literature, which are both instances of~{\em gauged spaces}:

\begin{definition}
\label{definition_gauged_space}
A \textbf{gauged space} is a topological space~$X$ equipped with a continuous map~$\pi:X\rightarrow \R$. 

A \textbf{morphism} between two gauged spaces~$(X_1,\pi_1)$ and~$(X_2,\pi_2)$ is a continuous map~$\phi:X_1 \to X_2$ satisfying~$\pi_1 = \pi_2 \circ \phi$. An \textbf{isomorphism} of gauged space is a morphism that is also a homeomorphism.
\end{definition}

A continuous function~$f:X\rightarrow \R$ yields a merge tree as defined in~\cite{morozov2013interleaving}, which is an instance of gauged space:
\begin{definition}
For a topological space~$X$ with a continuous function~$f$, the associated \textbf{merge tree} $\MT(f)$ is the quotient of the space
\begin{equation*}
    \epi (f) := \{ (x,t)\in X\times \mathbb{R} :t \geq f(x) \}
\end{equation*}
by the relation~$(x,t) \sim (y,t)$ whenever~$x$ and~$y$ are in the same connected component of~$f^{-1}(-\infty,t]$.
\label{def:mergetree}
\end{definition}

Since merge trees inherit a map~$\pi_f$ from the second coordinate projection map on~$\epi(f)$, they are naturally viewed as gauged spaces. We illustrate the construction of a merge tree in Figure~\ref{fig:exmt}.

\begin{figure}[htbp]
\centering
\resizebox{\textwidth}{!}{
\includegraphics{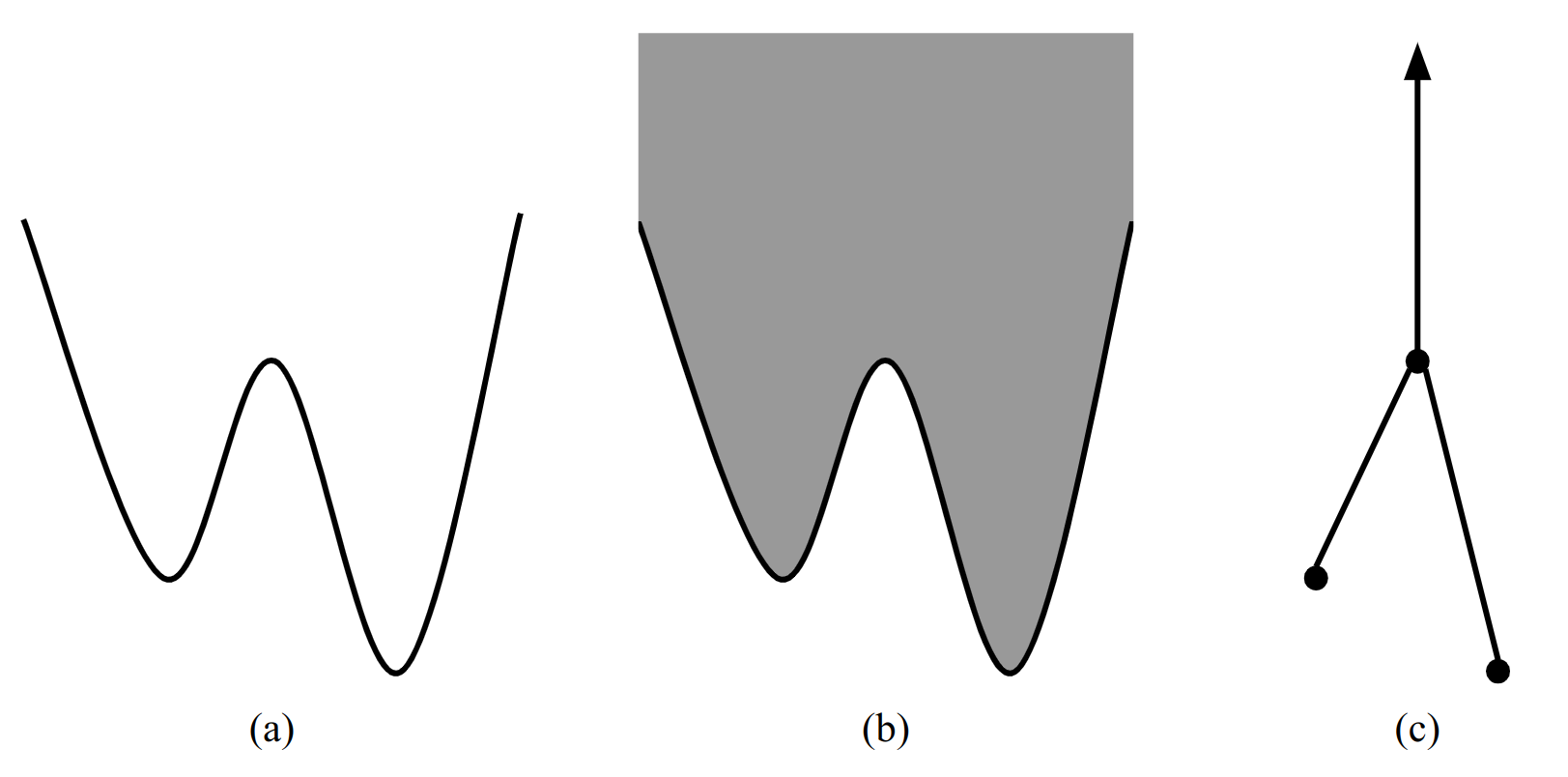}
}

    \caption{The construction of a merge tree from a function. (a) The graph of a function~$f$ on an interval. (b) Shaded is the content of~$\epi(f)$ strictly above the graph of~$f$. (c) By sending connected components of horizontal slices to points we obtain~$\MT(f)$. It happens that this merge tree has a cellular structure, although in general this may not be the case.}
    \label{fig:exmt}
\end{figure}

The other definition of merge trees that we will use appears for example in \cite{curry2018fiber}. We provide an analogous definition here, along with the notion of a labeling from~\cite{gasparovic2019intrinsic}.

\begin{definition}
\label{definition_cellular_merge_trees}
A \textbf{cellular merge tree}~$(\cellMTree, \pi)$ is the quotient space of a geometric rooted tree~$(\Tree',r)$ and a half open interval:
\begin{equation*}
    \Tree'\sqcup[0,1)/(r\sim 0),
\end{equation*}
equipped with a real-valued map~$\pi$ satisfying
\begin{itemize}
    \item~$\pi$ is strictly decreasing along edges in~$\Tree'$ oriented from the root~$r$.
    \item~$\pi$ is strictly increasing to infinity along the half open interval~$[0,1)$.
\end{itemize}
\end{definition}

The nodes of a merge tree~$\cellMTree$ are endowed with a partial order~$\preceq$ where~$\Labelnode \preceq \Labelnode'$ whenever~$\Labelnode$ is a descendent of~$\Labelnode'$.

Given a cellular merge tree~$(\cellMTree,f)$, the map from~$x \in f^{-1}(t)$ to the connected component of $f^{-1}(-\infty,t]$ containing~$x$ is a bijection. Further, if there is a path from~$x$ to~$y$ in~$\cellMTree$ along which~$f$ is increasing, then for~$t\leq f(y)$, the connected component containing~$x$ in~$f^{-1}(-\infty,t]$ is contained in the connected component containing~$y$ in~$f^{-1}(-\infty,f(y)]$. From this it follows that~$\cellMTree = \MT(f)$ and~$f = \pi_f$. Therefore, through the continuous injection~$(\cellMTree,f)\mapsto (\MT(f),\pi_f)$, cellular merge trees form a subspace of regular merge trees.

For notational convenience we will sometimes refer to the subset~$(0,1)$ of the half open interval of a cellular merge tree as~$e_\infty$. For indexing convenience, we will often work with {\em labelled} cellular merge trees, for which an arbitrary ordering of the leaves~$\Labelleaf_1,\cdots, \Labelleaf_n$ and of the nodes~$\Labelnode_1,\cdots,\Labelnode_m$ has been fixed. Although by default we assume that these orderings do not allow repetitions of the leaves and of the nodes, we will sometimes explicitly allow repetitions to make use of more general definitions and results from~\cite{gasparovic2019intrinsic}. For instance, the following definition allows labels with repetitions:
\begin{definition}
\label{def_matrix_merge_tree}
The \textbf{induced matrix} of a labelled cellular merge tree~$(\cellMTree,\pi)$ is given by
\begin{equation*}
   \LCAmatrix(\cellMTree)_{ij} := \pi(\LCA(\Labelleaf_i,\Labelleaf_j)).
\end{equation*}
\end{definition}

To simplify notations, when the context leaves is clear, we will write~$\MT(f)$ given a function~$f$ to designate the gauged space~$(\MT(f),\pi_f)$, and similarly~$\cellMTree$ to designate the cellular merge tree~$(\cellMTree,\pi)$.

\subsection{Persistent homology}
Fix a topological space~$X$ and a continuous function~$f:X\to \mathbb{R}$. The function~$f$ gives rise to a sequence of topological spaces~$f^{-1}(-\infty,t]$, nested by inclusion maps. Applying~$i^\mathrm{th}$ homology over a field~$\mathbb{F}$ to the sequence of spaces induces a sequence of vector spaces indexed by~$\R$. This sequence~$\persmod_i(f)$ of vector spaces is called the~$i^\mathrm{th}$ \textit{persistence module}~$f$. The persistence modules~$f$ can also be thought of as functors from~$(\R, \leq)$ to the category of vector spaces. If~$\persmod_i(f)$ is {\em pointwise finite dimensional} ({\em pfd} for short), i.e.~$\dim H_i(f^{-1}(-\infty,t])<\infty$ for all~$t$, then the~$i^{\text{th}}$ persistence module decomposes into a direct sum of modules \cite{crawley2015decomposition} indexed by a multiset~$D$,
\begin{equation}
\label{eq_decomposition_theorem}
   \persmod_i(f) \cong  \bigoplus_{I\in D} M_I,
\end{equation}
where each~$I\subseteq \R$ is an interval of the real line, and~$M_I$ is defined to be the sequence of vector spaces
\begin{equation*}
    M_I(t) = \begin{cases}
            \mathbb{F} & t\in I \\
            0 & \textrm{else,}
        \end{cases}
\end{equation*}
with associated maps
\begin{equation*}
    M_I(s,t) = \begin{cases}
            id & s,t\in I \\
            0 & \textrm{else.} 
        \end{cases}
\end{equation*}
The sequences of vector spaces~$M_I$ are called interval modules.

The multiset of intervals~$D$ is called the \textit{barcode} in dimension~$i$ of~$f$. We say that~$D$ is {\em finite} if it is a finite collection of intervals. We say that a function~$f$ is {\em pfd} if all its persistence modules~$\persmod_i(f)$ are pfd themselves. In this case, the collection of barcodes~$\{\PH_i(f)\}_{i\geq 0}$ associated to a function~$f$, abbreviated~$\PH(f)$, is well-defined and referred to as its {\em persistent homology}.

\subsection{Persistent homology and local minima}
\label{sec:persistent_homology_tree}

In this section we show some relations between the zero dimensional barcode~$\PH_0(f)$ and the number of local minima of a function~$f$. 

\begin{definition}
Given a topological space~$X$ and a map~$f:X\to\mathbb{R}$, a subset~$M\subseteq X$ is a \textbf{local minimum of~$f$} if~$M$ is connected,~$f$ is constant on~$M$, and any connected~$M'$ containing~$M$ also contains a point~$x$ satisfying~$f(x) > f(M)$.
\end{definition}

Note that if~$f$ is continuous then its local minima are each closed. Hence, if~$X$ is also compact, then the local minima of~$f$ are compact. In particular the local minima are compact when~$f$ is continuous and~$X$ is a geometric tree.

\begin{lemma}
\label{lemma_finitely_many_local_minima_weaker_version}
Let~$X$ be any topological space,~$D$ be a finite barcode, and~$f:X\to \mathbb{R}$ with~$\PH_0(f) = D$. Then~$f$ has finitely many local minima.
\end{lemma}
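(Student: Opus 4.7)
The plan is to show that each local minimum of~$f$ contributes an independent birth event in the zero-dimensional persistence module~$\persmod_0(f)$, so that the finiteness of~$D$ forces finiteness of the number of local minima. The first step is a geometric observation: any local minimum~$M$ at level~$c := f(M)$ is a connected component of the sublevel set~$f^{-1}(-\infty, c]$, since $M$ is connected with $f \equiv c$ by hypothesis, and any strictly larger connected subset of~$X$ contains a point where $f > c$ by the defining property, hence cannot lie inside~$f^{-1}(-\infty, c]$.

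Next I would show that the class $[M] \in H_0(f^{-1}(-\infty, c])$ is not in the image of $H_0(f^{-1}(-\infty, c']) \to H_0(f^{-1}(-\infty, c])$ for any $c' < c$: any such image class must involve some nonempty component~$N$ of~$f^{-1}(-\infty, c']$ mapping to~$M$, so $N \subseteq M$, yet the points of~$N$ satisfy $f \leq c' < c$, contradicting $f \equiv c$ on~$M$. Combined with the fact that distinct local minima at the same level~$c$ define distinct components of~$f^{-1}(-\infty, c]$, hence linearly independent classes in~$H_0(f^{-1}(-\infty, c])$, this bounds the number of local minima at level~$c$ by the dimension of the quotient
\[ Q_c := H_0(f^{-1}(-\infty, c]) \,\Big/\, \sum_{c' < c} \mathrm{image}\bigl( H_0(f^{-1}(-\infty, c']) \to H_0(f^{-1}(-\infty, c]) \bigr). \]

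The final step is to invoke the decomposition~\eqref{eq_decomposition_theorem}: the image from any $c' < c$ is spanned by those summands~$M_I$ with $c', c \in I$, so the sum over $c' < c$ is spanned by those~$I$ with $b_I < c$ and $c \in I$, and~$Q_c$ is therefore spanned by the remaining summands at~$c$, namely those with $b_I = c$. Hence $\dim Q_c = |\{I \in D : b_I = c\}|$, and summing over all levels~$c$ bounds the total number of local minima by $|D| < \infty$. The main obstacle is the dimension computation for~$Q_c$: it requires $\persmod_0(f)$ to be pointwise finite-dimensional so that the structure theorem~\eqref{eq_decomposition_theorem} applies, but this is automatically guaranteed by the hypothesis that~$D$ is finite.
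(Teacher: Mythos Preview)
Your argument is correct and follows essentially the same approach as the paper: each local minimum at level~$c$ contributes a class in~$H_0(f^{-1}(-\infty,c])$ that is not in the image from any lower level, and the interval decomposition bounds the number of such classes by the number of bars with left endpoint~$c$. One small point to tighten is that singular~$H_0$ is generated by path-components rather than connected components, so the symbol~$[M]$ should be read as the class of a chosen point (equivalently, a path-component) of~$M$; the paper makes this explicit by passing to a path-connected piece~$M_0\subseteq M$ before invoking surjectivity, whereas your cleaner observation that~$M$ is an entire connected component of~$f^{-1}(-\infty,c]$ lets you conclude~$N\subseteq M$ directly once this identification is made.
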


\begin{proof}
Let~$\LocalMin$ be a local minimum of~$f$, and~$m=f(\LocalMin)$. Assume, seeking contradiction, that no interval of~$D$ starts at~$m$. Then we can find a range~$[m-\epsilon, m]$ where no interval of~$D$ starts. Using the decomposition~\eqref{eq_decomposition_theorem} we see that the internal morphism~$\persmod_0(f)(m-\epsilon)\rightarrow \persmod_0(f)(m)$ is surjective. 

Note that~$\LocalMin\subseteq f^{-1}(-\infty,m]$ is connected in~$X$ and hence is a disjoint union of path connected subspaces. Pick one of these subspaces~$\LocalMin_0$. Therefore there is a path-connected component~$\ConnectedComp_{m-\epsilon}$ of~$f^{-1}(-\infty,m-\epsilon]$ which lies in the same path-connected component as~$\LocalMin_0$ in~$f^{-1}(-\infty,m]$. Given a path~$\gamma$ from~$\ConnectedComp_{m-\epsilon}$ to~$\LocalMin_0$ in~$f^{-1}(-\infty,m]$, the set~$\LocalMin'=\LocalMin\cup \im \gamma$ contradicts that~$\LocalMin$ is a local minimum.  

The same reasoning, working locally around local minima, shows that there are at least as many intervals in~$D$ starting at~$m$ as there are local minima with value~$m$. Since~$D$ is finite, this implies that~$f$ has finitely many local minima.
\end{proof}

The following result also ensures that the barcode of a continuous function on a tree with finitely many local minima is well-defined. 

\begin{lemma}
\label{lemma_finitely_many_local_minima_implies_pfd}
Let~$X$ be a tree and~$f:\tree\rightarrow \R$ be a continuous function with finitely many local minima. Then~$f$ is pfd. 
\end{lemma}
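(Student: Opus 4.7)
The plan is to bound the Betti numbers of each sublevel set $f^{-1}(-\infty, t]$ by showing it has only finitely many connected components, each of which is contractible. Since $X$ is a geometric tree (hence one-dimensional) this will suffice: $H_i$ vanishes automatically for $i \geq 2$ by dimension, vanishes for $i = 1$ by contractibility of components, and in degree $0$ simply counts connected components, which will be finite.

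For the contractibility, I would argue that a closed connected subset $S$ of $X$ is geodesically convex, meaning that for any $x, y \in S$ the unique shortest path from $x$ to $y$ in $X$ already lies inside $S$. Indeed, if it did not, removing a point of that path not in $S$ would partition $S$ into two nonempty relatively open pieces via the connected components of $X$ minus that point, exactly as in the paragraph about connectedness versus path-connectedness in Section~\ref{section_merge_trees}. Fixing any basepoint $p \in S$, the radial contraction of $S$ onto $p$ along these shortest paths is then a continuous deformation, so $S$ is contractible.

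To bound the number of components, let $\Omega$ be a connected component of $f^{-1}(-\infty, t]$. Since $X$ is compact and $\Omega$ is closed, $\Omega$ is compact, so $f|_\Omega$ attains its minimum value $m_\Omega$. Let $N = f^{-1}(m_\Omega) \cap \Omega$ and pick any connected component $M$ of $N$. I claim $M$ is a local minimum of $f$. The set $M$ is connected and $f$ is constant on $M$ by construction. Given any connected $M' \subseteq X$ with $M \subsetneq M'$, there are two cases: if $M' \subseteq \Omega$, then $M' \not\subseteq N$ (else $M$ would not be maximal in $N$), so $M'$ contains a point with $f$-value strictly above $m_\Omega$; otherwise $M'$ exits $\Omega$, and since $\Omega$ is a full connected component of $f^{-1}(-\infty, t]$, $M'$ must in fact exit $f^{-1}(-\infty, t]$ altogether, yielding a point with $f > t \geq m_\Omega$. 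Either way $M$ meets the definition of local minimum. Since distinct components of $f^{-1}(-\infty, t]$ host disjoint local minima (a local minimum is connected, hence lies in exactly one component), the finite number of local minima of $f$ bounds the number of components of $f^{-1}(-\infty, t]$ uniformly in $t$.

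The main obstacle is this last verification that $M$ qualifies as a local minimum in the sense of the paper, since the definition quantifies over arbitrary connected supersets $M' \subseteq X$ rather than only those lying within $f^{-1}(-\infty, t]$. The key is that the maximality of $\Omega$ among connected subsets of $f^{-1}(-\infty, t]$ lets us translate an escape from $\Omega$ into an escape from $f^{-1}(-\infty, t]$ entirely, which is what forces the value of $f$ above $m_\Omega$ in the second case. The rest of the argument is essentially formal.
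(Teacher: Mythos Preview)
Your proof is correct and follows essentially the same approach as the paper: each connected component of a sublevel set contains a local minimum of $f$ (hence there are only finitely many components), and each component, being a connected subset of a tree, is contractible (so higher homology vanishes). You supply more detail than the paper does---in particular, you spell out the geodesic-convexity argument for contractibility and the case analysis verifying that $M$ is a local minimum, whereas the paper simply asserts both facts---but the underlying strategy is the same.
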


\begin{proof}
    Let~$t\in \R$. Let~$\ConnectedComp$ be a path-connected component of~$f^{-1}(-\infty,t]$. Consider the following alternative:
    \begin{enumerate}
    \item Either~$f$ has constant value~$t$ over~$\ConnectedComp$. Then~$M:=\ConnectedComp$ is a local minimum of~$f$, because any connected strict superset~$M'$ will be included in some~$f^{-1}(-\infty,t']$ with~$t'>t$, but not in~$f^{-1}(-\infty,t]$.
    \item Or~$f$ attains a minimum~$t'<t$ over one maximal connected subset~$M\subseteq \ConnectedComp$. Then~$M$ is also a local minimum of~$f$ in the whole~$\tree$. 
    \end{enumerate}
    In both cases, we can find a local minimum of~$f$ inside~$\ConnectedComp$, and since~$f$ has finitely many local minima, we deduce that~$f^{-1}(-\infty,t]$ has finitely many path-connected components, i.e.~$\dim H_0(f^{-1}(-\infty,t])<+\infty$. This is because a subset of a geometric tree is connected if and only if it is path connected. 
    
    Finally, since~$X$ is a tree, its subsets are component-wise contractible, hence~$\dim H_i(f^{-1}(-\infty,t])=0$ for all~$i>0$ and the result follows.
\end{proof}

\subsection{The fiber of persistent homology on a tree}
\label{sec:fromMTtoPH}
In this section we assume that~$\tree$ is a tree. Then all of its subsets are component-wise contractible, so its~$i^\mathrm{th}$ persistence modules are trivial for all~$i>0$. Hence, we refer to the zero dimensional barcode of~$f$ simply as~$\PH(f)$, the persistent homology of~$f$.

In this paper, we study the space of pfd continuous functions~$f:\tree\rightarrow \R$ giving rise to a fixed barcode~$D$:
\[\fib := \bigg\{ f:\tree\rightarrow \R \text{ pfd continuous} \mid \PH(f)=D\bigg\}.\]
We consider the topology on~$\fib$ induced by the supremum norm on continuous functions.

\begin{remark}
\label{remark_extend_fiber_problem_to_all_continuous_function}
We can also consider this inverse problem more generally in the space of all continuous functions by working directly at the level of persistence modules: studying the space of continuous functions~$f:\tree\rightarrow \R$ satisfying~$ \persmod_0(f) \cong  \bigoplus_{I\in D} M_I$. Our analysis could be conducted in this setting without substantial modifications, in particular because we will assume~$D$ to be finite. But to keep the exposition simple, this work assumes functions are pfd so that their barcodes are always defined.
\end{remark}

As observed in \cite{morozov2013interleaving}, the zero dimensional persistent homology of~$(X,f)$, for~$f:X\rightarrow \R$ a pfd function, is also the zero dimensional persistent homology of~$(\MT(f),\pi_f)$. Indeed, the dimension of~$H_0(f^{-1}(-\infty,t])$ is exactly the number of path components of~$f^{-1}(-\infty,t]$, which, being a subset of a tree, is the number of connected components of the same set. This is exactly~$|\pi_f^{-1}(t)|$. However, the set~$\pi_f^{-1}(-\infty,t]$ retracts onto~$\pi_f^{-1}(t)$ via the homotopy
\begin{equation*}
    h_u: (x,s) \longmapsto (x,s(1-u) + tu).
\end{equation*}
Hence for each~$t$ the map~$x \mapsto (x,t)$ induces pointwise isomorphisms between~$H_0(f^{-1}(-\infty,t])$ and $H_0(\pi_f^{-1}(-\infty,t])$. Further, these isomorphisms commute with inclusions arising from inequalities~$s \leq t$. Hence the persistence modules of~$(X,f)$ are completely determined by~$\MT(f)$. In other words the map~$\PH$ factors as a composite of maps
$$\xymatrix@1{\PH:  f \ar@{|->}[rr]^-{\MT} && \MT(f) \ar@{|->}[rr]&& D. }$$

We thus refer to the {\em persistent homology} on merge trees as the second of these maps, and its value on a given merge tree~$\MT(f)$ as the {\em barcode} of~$\MT(f)$. 

These observations naturally organise the problem of computing the fiber~$\fib$ into two consecutive steps: we will first study which functions have a given merge tree, and second, which merge trees have a given barcode. 

\section{Tree structure and metric for merge trees}
\label{section_when_merge_trees_are_trees_and_metric}
\subsection{When Merge trees are trees}
\label{subsection_when_merge_trees_are_trees}
It is tempting to assume that~$(\MT(f),\pi_f)$ is always a cellular merge tree, however this is not the case, even when~$X$ is very simple.

\begin{example}
\label{example_merge_tree_not_necessarily_cellular}
 If~$X = (-\infty, 0]$ and~$f(x) = e^x$,~$\MT(f)$ is an interval with two open endpoints. This cannot be a cellular merge tree since cellular merge trees have at most one open endpoint.
\end{example}

The following result gives conditions under which~$\MT(f)$ is indeed a cellular merge tree.

\begin{theorem}
\label{theorem_merge_tree_are_cellulars}
Let~$X$ be a compact connected space and~$f:X \to \R$ be a continuous function. Then~$(\MT(f),\pi_f)$ is a cellular merge tree if and only if~$f$ has finitely many local minima. More precisely,~$(\MT(f),\pi_f)$ is isomorphic to the labelled cellular merge tree~$(\cellMTree, \pi)$ with leaves~$\Labelleaf_1,\ldots,\Labelleaf_n$ if and only if the following conditions on~$f$ are satisfied:
\begin{enumerate}
    \item The function~$f$ has finitely many local minima~$X_1,\cdots,X_n$, with values~$\pi(\Labelleaf_1),\cdots ,\pi(\Labelleaf_n)$.
    \item For any~$1\leq i<j \leq n$, let~$t_{ij}$ denote the infimum of values~$t$ where~$X_i$ and~$X_j$ are in the same connected component of~$f^{-1}(-\infty,t]$, i.e.~$t_{ij}=\inf\{t \mid (X_i,t)\sim (X_j,t)\}$. Then
    \[t_{ij}=   \LCAmatrix(\cellMTree)_{ij}.\]
\end{enumerate}
\label{prop:mtcell}
\end{theorem}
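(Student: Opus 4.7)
The plan is to dispatch the forward implication quickly and then focus on the backward one. If $(\MT(f), \pi_f)$ is isomorphic to a cellular merge tree $(\cellMTree, \pi)$, then its barcode is finite (reading off one interval per leaf of the underlying finite tree, with the final one extending to infinity along $[0,1)$), so Lemma \ref{lemma_finitely_many_local_minima_weaker_version} gives condition (1), with each leaf $\Labelleaf_i$ corresponding to the class $[(X_i, v_i)]$ where $v_i = f(X_i) = \pi(\Labelleaf_i)$. Condition (2) then follows because any isomorphism of gauged spaces sends the least common ancestor of $\Labelleaf_i$ and $\Labelleaf_j$ in $\cellMTree$, sitting at height $\LCAmatrix(\cellMTree)_{ij}$, to the lowest point where the upward paths starting from $[(X_i, v_i)]$ and $[(X_j, v_j)]$ meet in $\MT(f)$, which is exactly $t_{ij}$.

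For the converse, suppose $f$ has finitely many local minima $X_1, \ldots, X_n$ with values $v_i$. The first key step is to show that every connected component $C$ of every sublevel set $f^{-1}(-\infty, t]$ contains at least one $X_i$. Compactness gives $m' = \min_C f \leq t$; applying Zorn's Lemma to the connected subsets of $\{x \in C : f(x) = m'\}$ ordered by inclusion yields a maximal such set $\LocalMin$, which one verifies is a local minimum of $f$ in $X$: for any connected $\LocalMin' \supsetneq \LocalMin$, either $\LocalMin'$ leaves $C$, in which case connectedness of $\LocalMin'$ forces it to pick up a value exceeding $t \geq m'$, or else $\LocalMin' \subseteq \{x \in C : f(x) = m'\}$, contradicting maximality.

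Consequently $f^{-1}(-\infty, t]$ has at most $n$ components for every $t$, indexed by the equivalence classes of $\{X_1, \ldots, X_n\}$ under the relation $i \sim_t j \iff (X_i, t) \sim (X_j, t)$. Nestedness of sublevel sets shows that the values $t_{ij}$ of condition (2) satisfy the ultrametric inequality $t_{ik} \leq \max(t_{ij}, t_{jk})$, so the set $\{t_{ij}\} \cup \{v_i\}$ is finite and combinatorially describes the same rooted tree as $\cellMTree$ under the LCA identification. The maps $\sigma_i : [v_i, \infty) \to \MT(f)$, $t \mapsto [(X_i, t)]$, are continuous sections of $\pi_f$, cover $\MT(f)$ by the first key step, and agree precisely as dictated by the $t_{ij}$; gluing the $\sigma_i$ along these identifications yields a candidate isomorphism $\cellMTree \to \MT(f)$.

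The main obstacle will be promoting this set-theoretic bijection to a homeomorphism of gauged spaces. Continuity from $\cellMTree$ into $\MT(f)$ is immediate as a composition of continuous maps, while the reverse direction requires checking that, between consecutive critical heights in the finite set $\{t_{ij}\} \cup \{v_i\}$, the equivalence classes of $\epi(f)$ depend continuously on the height parameter. This reduces to the observation that on such intervals the partition of $\{X_1,\ldots,X_n\}$ is locally constant, so the quotient of $\epi(f)$ by the merge relation restricts to a disjoint union of intervals traced out by the $\sigma_i$. The open ray $e_\infty$ of $\cellMTree$ matches the common image of the $\sigma_i$ above the root height, completing the identification of $\MT(f)$ and $\cellMTree$ as gauged spaces.
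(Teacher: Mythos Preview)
Your overall strategy matches the paper's: establish that every sublevel-set component contains some local minimum $X_i$ (your ``first key step'' is the paper's Lemma~\ref{lem:minrep}), then glue the sections $\sigma_i:[v_i,\infty)\to\MT(f)$ along the merging data $t_{ij}$ to produce the isomorphism with $\cellMTree$. Two comments.

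First, a genuine gap. Your gluing requires $\sigma_i(t)=\sigma_j(t)$ for all $t\geq t_{ij}$, but $t_{ij}$ is only defined as an \emph{infimum}. For $t>t_{ij}$ this is automatic; at $t=t_{ij}$ itself you need that $X_i$ and $X_j$ already lie in the same component of $f^{-1}(-\infty,t_{ij}]$. This is not obvious and is exactly the content of the paper's Lemma~\ref{lemma_infimum_same_components_is_minimum}, proved via a separation argument using compactness and connectedness of $X$: if the components $\Omega_i\ni X_i$ and $\Omega_j\ni X_j$ at level $t_{ij}$ were still disjoint, then $X\setminus(\Omega_i\cup\Omega_j)$ would be a nonempty compact set on which $f$ strictly exceeds $t_{ij}$, forcing a disconnection of $X$. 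Without this step your ``candidate isomorphism'' is not well-defined on the branch points of $\cellMTree$, and the map you describe is not even a set-theoretic bijection (two distinct points of $\MT(f)$ at height $t_{ij}$ would sit over a single point of $\cellMTree$). This, rather than the continuity issue you flag in your final paragraph, is the main obstacle.

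Second, a minor stylistic remark: invoking Zorn's lemma in your first key step is unnecessary. The paper simply takes $\LocalMin$ to be a connected component of $\{x\in C: f(x)=m'\}$; connected components are automatically maximal connected subsets, so the verification that $\LocalMin$ is a local minimum goes through directly.
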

We fix the function~$f:X \to \R$ and the cellular merge tree~$(\cellMTree, \pi)$. We will use two lemmas.

\begin{lemma}
\label{lem:minrep}
Let~$(y, t) \in\MT(f)$. There exists a local minimum~$\LocalMin$ of~$f$ such that~$(x,t)$ is also a representative of~$(y,t)$ for any~$x$ in~$\LocalMin$. In particular,~$\LocalMin$ can be chosen such that~$f(M) \leq f(y)$. As a result, for any~$t$, there are at most as many connected components of~$f^{-1}(-\infty,t]$ as local minima of~$f$.
\end{lemma}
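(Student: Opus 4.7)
The plan is to represent the equivalence class of $(y,t)$ in $\MT(f)$ by the connected component $C$ of $f^{-1}(-\infty,t]$ containing $y$, and then to mine a local minimum of $f$ from inside $C$. Since connected components are always closed and $f^{-1}(-\infty,t]$ is closed in the compact space $X$, the set $C$ is compact, so $f|_C$ attains a minimum value $m$ at some point of $C$; note $m \leq f(y)$ automatically. Let $S := C \cap f^{-1}(m)$ and choose $\LocalMin$ to be any connected component of $S$. I claim $\LocalMin$ is a local minimum of $f$ in $X$ satisfying $\LocalMin \subseteq C$ and $f(\LocalMin)=m\leq f(y)$. Once this is established, every $x \in \LocalMin$ satisfies $x\in C$, so $(x,t)\sim(y,t)$ by the definition of $\MT(f)$.

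The main step is verifying the defining property of a local minimum: any connected $\LocalMin' \supsetneq \LocalMin$ in $X$ must contain a point where $f$ exceeds $m$. Assume otherwise, so $\LocalMin' \subseteq f^{-1}(-\infty,m] \subseteq f^{-1}(-\infty,t]$. Since $\LocalMin'$ is connected in $X$ and intersects $C$ (because it contains $\LocalMin$), and $C$ is a connected component of $f^{-1}(-\infty,t]$, we deduce $\LocalMin' \subseteq C$. Then $\LocalMin'\subseteq C\cap f^{-1}(m)=S$, giving a connected subset of $S$ strictly containing $\LocalMin$, contradicting the choice of $\LocalMin$ as a connected component of $S$.

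The final consequence is then immediate: assigning to each connected component $C$ of $f^{-1}(-\infty,t]$ a local minimum $\LocalMin_C \subseteq C$ produced by the construction above gives an injection from components to local minima of $f$, since any local minimum, being connected and contained in $f^{-1}(-\infty,t]$, lies in exactly one such component. Hence $|\pi_0(f^{-1}(-\infty,t])|$ is bounded by the number of local minima of $f$.

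The one subtle point I anticipate is the step forcing $\LocalMin'\subseteq C$ from connectedness in the ambient space $X$, which uses the observation that a connected subset of $f^{-1}(-\infty,t]$ meeting the connected component $C$ must be contained in $C$; everything else is a direct application of compactness, continuity, and the definitions.
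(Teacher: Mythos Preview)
Your proof is correct and follows essentially the same route as the paper: take the connected component $C$ of $y$ in $f^{-1}(-\infty,t]$, use compactness to obtain the minimum value $m$ of $f|_C$, pick a connected component $\LocalMin$ of $C\cap f^{-1}(m)$, and verify the local-minimum property via the containment argument $\LocalMin'\subseteq C$. The only step you leave implicit is that $\LocalMin'\subseteq C$ together with $m=\min_C f$ forces $\LocalMin'\subseteq f^{-1}(m)$ (not merely $f^{-1}(-\infty,m]$), which the paper spells out; otherwise the arguments coincide.
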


\begin{proof}
\newcommand{\ConnectedCompY}{\Omega_y}

Fix~$y$ and~$t$. Let~$\ConnectedCompY$ denote the connected component of~$y$ in~$f^{-1}(-\infty,t]$. The set~$\ConnectedCompY$ is a closed subset of a compact set, and therefore is compact. Let~$m:=\min f_{|\ConnectedCompY}$, and let~$\LocalMin\subseteq\ConnectedCompY$ be a connected component of~$f^{-1}(m)\cap \ConnectedCompY$. We claim~$\LocalMin$ is a local minimum of~$f$ in~$X$.

Suppose~$\LocalMin'\subseteq X$ is a connected set containing~$\LocalMin$ on which~$f$ is never greater than~$m$. Since~$\LocalMin'\subseteq f^{-1}(-\infty,m] \subseteq f^{-1}(-\infty,t]$, we have~$\LocalMin'\subseteq\ConnectedCompY$, and therefore~$\LocalMin'\subseteq f^{-1}(m)$  because~$m=\min f_{|\ConnectedCompY}$. Thus~$M=M'$. It follows that~$\LocalMin$ is a local minimum of~$f$ in~$X$. Meanwhile, since~$\LocalMin$ minimizes~$f$ on~$\ConnectedCompY$, it must be the case that~$f(\LocalMin) \leq f(y)$.

To each connected component~$\ConnectedCompY\subseteq f^{-1}(-\infty,t]$, we associate one of its local minima~$\LocalMin\subseteq \ConnectedCompY$, and the last part of the lemma follows.
\end{proof}

\begin{lemma}
\label{lemma_infimum_same_components_is_minimum}
For any~$1\leq i<j \leq n$, local minima~$X_i$ and~$X_j$ are connected in~$f^{-1}(-\infty,t_{ij}]$.
\end{lemma}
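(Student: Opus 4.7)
The plan is to show that the infimum defining~$t_{ij}$ is actually attained via a nested-intersection compactness argument. First, I would pick a strictly decreasing sequence~$t_k \downarrow t_{ij}$, and for each~$k$ let~$\Omega_k \subseteq X$ denote the connected component of~$X_i$ in~$f^{-1}(-\infty, t_k]$. Since~$t_{ij}$ is the infimum of values~$t$ at which~$X_i$ and~$X_j$ lie in the same connected component of~$f^{-1}(-\infty,t]$, for every~$k$ there exists some~$s \in (t_{ij}, t_k]$ with~$X_i$ and~$X_j$ in a common component of~$f^{-1}(-\infty, s]$; since~$f^{-1}(-\infty,s]\subseteq f^{-1}(-\infty,t_k]$, this forces~$X_j \subseteq \Omega_k$. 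The sets~$\Omega_k$ are nested decreasing because~$\Omega_{k+1}$ is a connected subset of~$f^{-1}(-\infty,t_k]$ containing~$X_i$. Each~$\Omega_k$ is closed in~$X$ (the closure of a connected subset of a closed set is again connected and still contained in that set), hence compact.

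Next I would set~$\Omega := \bigcap_k \Omega_k$ and argue that it is the desired connecting set. The standard point-set topology fact that nested intersections of nonempty compact connected subsets of a Hausdorff space are themselves nonempty and connected ensures~$\Omega$ is a nonempty connected subset of~$X$. By construction~$X_i \subseteq \Omega$, and~$X_j \subseteq \Omega$ since~$X_j \subseteq \Omega_k$ for every~$k$. Finally, any~$x \in \Omega$ satisfies~$f(x) \leq t_k$ for all~$k$, hence~$f(x) \leq t_{ij}$, showing~$\Omega \subseteq f^{-1}(-\infty, t_{ij}]$. Thus~$\Omega$ is a connected subset of~$f^{-1}(-\infty, t_{ij}]$ containing both~$X_i$ and~$X_j$, which is precisely what the lemma asks for.

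The only nontrivial ingredient is the classical fact that nested intersections of compact connected sets in a Hausdorff space remain connected. If I needed to justify it in place rather than cite it, I would suppose for contradiction that~$\Omega$ admits a separation into two disjoint nonempty closed subsets~$A, B$, invoke normality of~$X$ (compact Hausdorff) to find disjoint open neighborhoods~$U \supseteq A$ and~$V \supseteq B$, and then use the finite intersection property on the compacts~$\Omega_k \setminus (U\cup V)$ to conclude that some~$\Omega_k \subseteq U \cup V$, contradicting the connectedness of~$\Omega_k$ since both~$U$ and~$V$ meet it. This is the main (but routine) technical obstacle; the rest of the argument is the clean nested-compactness setup described above.
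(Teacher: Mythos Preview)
Your argument is correct and follows a genuinely different route from the paper's. The paper proceeds by contradiction: assuming~$X_i$ and~$X_j$ are \emph{not} connected in~$f^{-1}(-\infty,t_{ij}]$, it invokes Lemma~\ref{lem:minrep} to conclude that this sublevel set has only finitely many connected components, so the component~$\Omega_i$ of~$X_i$ is clopen there; it then argues that~$f$ attains a minimum value~$m>t_{ij}$ on~$X\setminus f^{-1}(-\infty,t_{ij}]$ and deduces that~$X$ itself is disconnected, contradicting the standing hypothesis.

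Your nested-intersection approach is more direct and, notably, never uses Lemma~\ref{lem:minrep} or the finiteness of local minima---compactness of~$X$ and continuity of~$f$ are all you need. In that sense your proof is slightly more general and more self-contained. The trade-off is that you import the classical point-set fact that a nested intersection of compact connected sets is connected, whose proof (as you sketch) uses normality of~$X$. The paper's stated hypothesis is only ``compact connected space,'' with no explicit Hausdorff assumption, so strictly speaking you are adding one; in the paper's intended setting (geometric trees, metric spaces) this is of course automatic and not a genuine gap. One minor phrasing point: your justification that~$\Omega_k$ is closed is a bit roundabout---it is cleaner to say that connected components are always closed in their ambient space, and~$f^{-1}(-\infty,t_k]$ is closed in~$X$.
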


\newcommand{\ConnectedCompXi}{\Omega_i}
\newcommand{\OutsideConnectedCompXi}{{\Omega}_j}
\newcommand{\NbghCompXi}{U_i}
\newcommand{\NbghOutsideCompXi}{U_j}
\newcommand{\minimumOutside}{m}
\begin{proof}
Suppose the opposite. Let~$\ConnectedCompXi$ be the connected component of~$X_i$ in~$f^{-1}(-\infty,t_{ij}]$. By Lemma~\ref{lem:minrep} there are finitely many connected components in~$f^{-1}(-\infty,t_{ij}]$, so~$\ConnectedCompXi$ is both open and closed. Thus~$\ConnectedCompXi$ and~$\OutsideConnectedCompXi:=f^{-1}(-\infty,t_{ij}]-\ConnectedCompXi$ are disjoint sets in~$X$ that are both open and closed. In particular,~$\OutsideConnectedCompXi$ is nonempty as it contains~$X_j$.  Since~$\ConnectedCompXi$ and~$\OutsideConnectedCompXi$ are each open,~$X - \ConnectedCompXi\cup \OutsideConnectedCompXi$ is closed and thus compact in~$X$. If~$X-\ConnectedCompXi\cup \OutsideConnectedCompXi$ is empty, this would imply~$X = \ConnectedCompXi\cup \OutsideConnectedCompXi$ is disconnected, contradicting the hypotheses of the theorem. Hence~$X-\ConnectedCompXi\cup \OutsideConnectedCompXi$ is nonempty and we may let~$\minimumOutside:=\min f_{|X - \ConnectedCompXi\cup \OutsideConnectedCompXi}$. 

The value~$\minimumOutside$ is greater than~$t_{ij}$ since~$f^{-1}(-\infty,t_{ij}]$ is contained in~$\ConnectedCompXi\cup \OutsideConnectedCompXi$. Therefore~$X=f^{-1}(-\infty,t_{ij}] \cup f^{-1}[m,+\infty)$ is disconnected, contradicting the hypotheses of the theorem.
\end{proof}

We next turn to the heart of the proof and find the actual tree structure of the merge tree~$\MT(f)$.

\newcommand{\GlobalMax}{A}
\newcommand{\ConnectedComponent}{\Omega}

\begin{proof}[Proof of Theorem \ref{prop:mtcell}]

Let~$x_1,\ldots,x_n$ be points in each of the local minima of~$X$ achieving the values~$m_1,\cdots,m_n$ under~$f$, and let:
\[\cellMTree:=  \bigg( \bigcup_{i=1}^n \{x_i\}\times [m_i,+\infty) \bigg)/ \bigg\{ (x_i,t)\sim (x_j,t) \mid t\geq t_{ij} \bigg\}. \]
So~$\cellMTree$ is a disjoint union of the~$n$ right-open intervals~$\{x_i\}\times [m_i,+\infty)$, and the~$i$-th interval is identified with the~$j$-th one at and beyond the threshold~$t_{ij}$. In particular,~$\cellMTree$ is a tree. 

We have a continuous map~$(x_i,t)\in \cellMTree \mapsto (x_i,t)\in \MT(f)$ which is well-defined by Lemma~\ref{lemma_infimum_same_components_is_minimum}. We also have a continuous map in the other direction, namely~$(x,t)\mapsto (x_i,t)$ where~$x_i$ is provided by Lemma~\ref{lem:minrep} to ensure~$(x,t)\sim (x_i,t)$ in~$\epi(f)$. Therefore~$\cellMTree$ and~$\MT(f)$ are isomorphic.
\end{proof}

\begin{remark}
If we defined merge trees using the equivalence relation~$(x,t) \sim (y,t)$ whenever~$x$ and~$y$ are in the same path component of~$f^{-1}(-\infty,t]$ instead of the same connected component, Theorem~\ref{prop:mtcell} would not hold. For a counterexample, consider the so-called topologist's sine curve
\begin{equation*}
    S = \{(0,t)\in \mathbb{R}^2 : t\in[-1,1]\}\cup\{(x,\sin\frac{1}{x})\in\mathbb{R}^2:x\in(0,1]\}.
\end{equation*}
It is well known that~$S$ is closed and connected but has two path components. Taking~$B$ to be a closed disk covering~$S$ and~$f:B\to\mathbb{R}$ the Euclidean distance from~$S$, we see that~$f^{-1}(-\infty,0]=S$ is not path-connected. Thus~$\MT(f)$ cannot be a cellular merge tree as it is not Hausdorff: any neighborhood of either point~$p$ with~$\pi_f(p)=0$ contains both such points.
\end{remark}

\begin{remark}
\label{remark_descendant_merge_tree}
In the conditions of Theorem~\ref{theorem_merge_tree_are_cellulars}, implying that~$\MT(f)$ is a cellular merge tree, a node~$\Labelnode$ is a descendant of a node~$\Labelnode'$ if and only if, when viewed as connected components of~$f^{-1}(-\infty, \pi(\Labelnode)]$ and~$f^{-1}(-\infty, \pi(\Labelnode')]$ respectively,~$\Labelnode$ is a subset of~$\Labelnode'$.
\end{remark}

\subsection{Interleaving distance on cellular merge trees}
\label{sec:cellular_interleaving}

In this section we analyze the interleaving distance, a pseudo-distance on merge trees~\cite[Lemma 1]{morozov2013interleaving}. We show that it is in fact a genuine distance on the subspace of cellular merge trees.

Aside from the map~$\pi_f$, merge trees also come equipped with {\em~$\epsilon$-shift} maps, for~$\epsilon \geq 0$:
\[i_f^\epsilon:(x,t)\in \MT(f) \longmapsto (x,t+\epsilon )\in \MT(f). \,\]
We will often omit subscripts, writing~$i_f^\epsilon$ as~$i^\epsilon$.

\begin{definition}
\label{definition_interleaving}
Let~$f$ and~$g$ be two continuous functions on~$X$. An \textbf{$\epsilon$-interleaving} between~$\MT(f)$ and~$\MT(g)$ is a pair of continuous functions~$\alpha^\epsilon:\MT(f) \rightarrow \MT(g)$,~$\beta^\epsilon : \MT(g) \rightarrow \MT(f)$ satisfying the following equations
\begin{align*}
    &\beta^\epsilon \circ \alpha^\epsilon = i^{2\epsilon} \qquad \pi_g(\alpha^\epsilon(x)) = \pi_f(x) + \epsilon \\
    &\alpha^\epsilon \circ \beta^\epsilon = i^{2\epsilon} \qquad \pi_f(\beta^\epsilon(y)) = \pi_g(y) + \epsilon.
\end{align*}

The \textbf{interleaving distance}~$d_I(\MT(f),\MT(g))$ is defined as the infimum of values~$\epsilon$ such that $\MT(f)$ and~$\MT(g)$ are~$\epsilon$-interleaved.
\end{definition}

We illustrate an interleaving between two merge trees in Figure \ref{fig:interleaving}.

\begin{figure}[htbp]
\centering
\resizebox{.5\textwidth}{!}{
\includegraphics{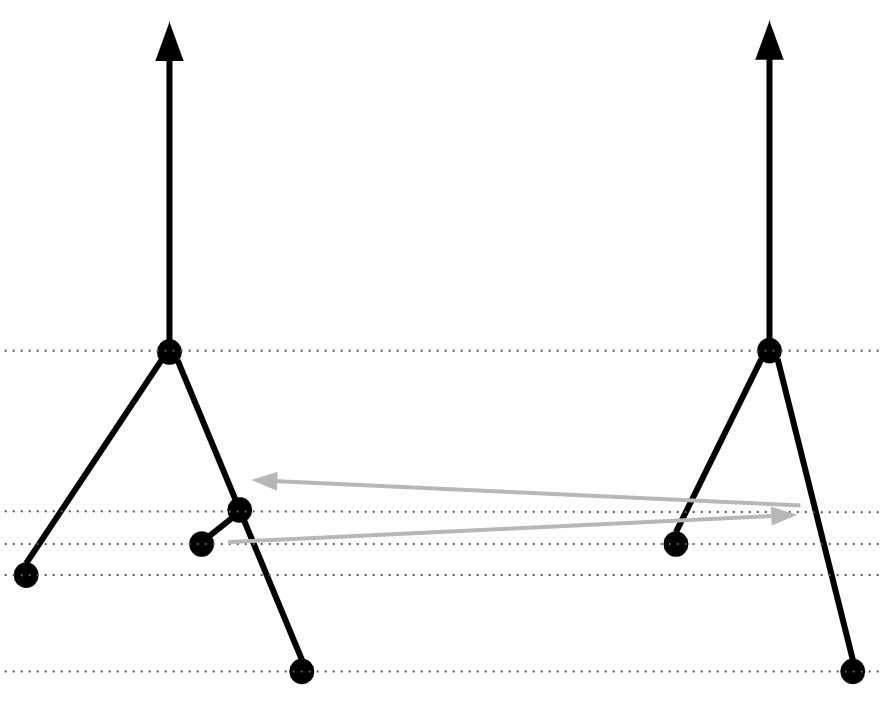}
}

    \caption{An interleaving between two merge trees.}
    \label{fig:interleaving}
\end{figure}
\begin{proposition}
\label{prop:intdist}
The interleaving distance is a metric on the subspace of cellular merge trees (up to isomorphism).
\end{proposition}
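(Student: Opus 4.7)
The three standard pseudo-metric properties follow directly from Definition~\ref{definition_interleaving} and are established in \cite{morozov2013interleaving}: non-negativity is immediate, symmetry follows by swapping the roles of $\alpha^\epsilon$ and $\beta^\epsilon$, and the triangle inequality follows by composing an $\epsilon_1$-interleaving with an $\epsilon_2$-interleaving to obtain an $(\epsilon_1+\epsilon_2)$-interleaving. The genuine content of the proposition is therefore the coincidence axiom: if $d_I(\cellMTree_1, \cellMTree_2)=0$ for two cellular merge trees, then $\cellMTree_1 \cong \cellMTree_2$ as gauged spaces.

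My plan for the coincidence axiom is to use the characterization of labeled cellular merge trees (up to isomorphism) by their induced matrices $\LCAmatrix$, as recalled in Definition~\ref{def_matrix_merge_tree} and drawn from \cite{gasparovic2019intrinsic}. It suffices to produce compatible labelings of $\cellMTree_1$ and $\cellMTree_2$ under which the induced matrices coincide exactly. I would start from a sequence of $\epsilon_k$-interleavings $(\alpha^{\epsilon_k}, \beta^{\epsilon_k})$ with $\epsilon_k\to 0$ and take $\epsilon_k$ smaller than half the minimum nonzero gap between the finitely many critical heights (leaves and branch points) of $\cellMTree_1$ and $\cellMTree_2$ combined. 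Under this smallness, since $\alpha^{\epsilon_k}$ shifts heights by exactly $\epsilon_k$, the image $\alpha^{\epsilon_k}(\Labelleaf_i)$ of a leaf $\Labelleaf_i$ of $\cellMTree_1$ must lie on a cellular edge of $\cellMTree_2$ descending to a unique leaf, giving a candidate leaf matching. The relation $\beta^{\epsilon_k}\circ\alpha^{\epsilon_k}=i^{2\epsilon_k}$ together with the analogous argument for $\beta^{\epsilon_k}$ forces this matching to be a bijection, so in particular $\cellMTree_1$ and $\cellMTree_2$ have the same number of leaves, and matched leaves have heights differing by at most $2\epsilon_k$. A parallel tracing of merging heights yields $|\LCAmatrix(\cellMTree_1)_{ij}-\LCAmatrix(\cellMTree_2)_{ij}|\leq 2\epsilon_k$ under the matched labeling. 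Letting $k\to\infty$ gives exact equality of leaf heights and of induced matrices.

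The main obstacle is justifying the local leaf-matching step: interleaving maps are only required to be continuous and height-shifting, with no prescribed combinatorial behaviour, so extracting a leaf-to-leaf assignment requires exploiting the finite cellular structure of the target. For $\epsilon$ smaller than the combined critical-height gap, the slice of $\cellMTree_2$ at heights in $[\pi(\Labelleaf_i),\pi(\Labelleaf_i)+2\epsilon]$ decomposes into disjoint small intervals each lying in a unique edge, and the composite identity $\beta^\epsilon\circ\alpha^\epsilon=i^{2\epsilon}$, together with the fact that $\Labelleaf_i$ is a local minimum of $\pi$, pins down the image to the unique component of this slice that descends to a leaf. An analogous local analysis near branch points, combined with the observation from Remark~\ref{remark_descendant_merge_tree} that LCAs correspond to the heights at which descendent sets first agree, then controls the $\LCAmatrix$ entries. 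Once these local arguments are in place, the passage to the limit $\epsilon_k\to 0$ is automatic because the data to be matched is finite.
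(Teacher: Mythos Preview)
Your overall strategy is sound and your endpoint---equal induced matrices under a suitable labeling, then invoke \cite{gasparovic2019intrinsic} to conclude isomorphism---is exactly the paper's endpoint. The route, however, differs. The paper does not construct the leaf matching by hand: it invokes Corollary~4.4 of \cite{gasparovic2019intrinsic} (resting on \cite{touli2018fpt}), which states that for cellular merge trees there exist labellings (possibly with repetitions) such that $d_I(\cellMTree_1,\cellMTree_2)=\max_{i,j}|\LCAmatrix(\cellMTree_1)_{ij}-\LCAmatrix(\cellMTree_2)_{ij}|$. From this, both finiteness of $d_I$ and the implication $d_I=0\Rightarrow\LCAmatrix(\cellMTree_1)=\LCAmatrix(\cellMTree_2)$ are immediate. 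So the paper's proof is essentially two citations; yours attempts to redo the combinatorial content of that corollary from first principles.

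Your sketch of the leaf-matching is the right idea but not yet a proof. Two points need real work. First, you assert that $\alpha^{\epsilon}(\Labelleaf_i)$ lies on an edge of $\cellMTree_2$ ``descending to a unique leaf'', and later that the relevant slice has a ``unique component that descends to a leaf''. Neither uniqueness nor the exclusion of landing above a branch point follows from the height-gap hypothesis alone; you must actually use $\beta^\epsilon\circ\alpha^\epsilon=i^{2\epsilon}$ together with the fact that nothing in $\cellMTree_1$ lies below $\Labelleaf_i$ to rule out those cases, and you should say how (e.g.\ if the first vertex below $\alpha^\epsilon(\Labelleaf_i)$ were a branch point with two leaves beneath it, track where $\beta^\epsilon$ sends those leaves and derive a contradiction with the minimality of $\Labelleaf_i$). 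Second, the proposition covers all cellular merge trees, not just generic ones, so leaves and branch points may share heights; your ``minimum nonzero gap'' could be zero, and the slice argument then needs adjustment. Finally, you never address finiteness of $d_I$; the paper treats this as a separate obligation (calling the Morozov result only an \emph{extended} pseudometric), and your proposal should too---for cellular merge trees it is easy (shift far enough that both trees become rays), but it should be said.
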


\begin{proof}
By~\cite[Lemma 1]{morozov2013interleaving} the interleaving distance is an extended pseudometric on cellular merge trees, so it remains to show that the interleaving distance is real valued on cellular merge trees and that two cellular merge trees~$(\cellMTree_1, \pi_1)$ and~$(\cellMTree_2,\pi_2)$ are isomorphic if they have interleaving distance zero.

By Corollary 4.4 of~\cite{gasparovic2019intrinsic} (which in turn depends upon \cite[Theorem 1]{touli2018fpt}), there exists two labellings  (possibly with repetitions) of~$\cellMTree_1$ and~$\cellMTree_2$ with the same number of indices of leaves, such that:
\begin{equation*}
    d_I(\cellMTree_1,\cellMTree_2) = \min_{1\leq i,j \leq N} |\LCAmatrix(\cellMTree_1)_{ij} - \LCAmatrix(\cellMTree_2)_{ij}|.
\end{equation*}
Therefore~$d_I$ is real-valued. In addition, by Lemma 2.9 of~\cite{gasparovic2019intrinsic}, if~$\LCAmatrix(\cellMTree_1)=\LCAmatrix(\cellMTree_2)$ then~$\cellMTree_1\cong \cellMTree_2$, and so~$d_I$ is a metric on cellular merge trees up to isomorphism.
\end{proof}

\begin{proposition}
\label{prop_small_distance_functions_implies_same_merge_trees}
Let~$X$ be a compact connected topological space, and let~$f,g:X\to \R$ be continuous pfd functions with the same finite barcode~$D$ in dimension zero. Let~$\delta_L$ be the minimum distance between pairs of non-equal interval left endpoints in~$D$ and~$\delta_R$ be the minimum distance between pairs of non-equal right endpoints. If
\begin{equation*}
    \|f-g\|_\infty < \min(\delta_L,\delta_R),
\end{equation*}
then~$\MT(f)$ and~$\MT(g)$ are isomorphic.
\end{proposition}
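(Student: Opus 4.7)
The plan is to combine the stability of the merge tree construction with the LCA matrix characterization of the interleaving distance used in the proof of Proposition \ref{prop:intdist}. First, since $f$ and $g$ both have the finite barcode $D$, Lemma \ref{lemma_finitely_many_local_minima_weaker_version} shows they each have finitely many local minima, so Theorem \ref{theorem_merge_tree_are_cellulars} yields that $\MT(f)$ and $\MT(g)$ are cellular merge trees. Since the persistence map factors through the merge tree (Section \ref{sec:fromMTtoPH}), both trees have the same barcode $D$, hence the same number of leaves $|D|$. By the standard stability of merge trees \cite[Lemma 1]{morozov2013interleaving}, we obtain
\[ d_I(\MT(f),\MT(g)) \leq \|f-g\|_\infty < \delta, \]
where $\delta := \min(\delta_L,\delta_R)$.

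Next, by Corollary 4.4 of \cite{gasparovic2019intrinsic}, there exist labellings of the leaves of $\MT(f)$ and $\MT(g)$ realizing $d_I(\MT(f),\MT(g))$ as the entry-wise sup norm of $\LCAmatrix(\MT(f)) - \LCAmatrix(\MT(g))$; this sup norm is therefore strictly less than $\delta$. Since both trees have the same number of leaves, one expects to take these labellings to be bijective on leaves, with no repetitions. Under such labellings, every diagonal entry is a leaf height, hence a left endpoint of $D$, while every off-diagonal entry is an LCA height; for distinct leaves in a cellular merge tree this is always the height of an internal branch point (or the root) of the finite tree part, hence a finite right endpoint of $D$.

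The entry-wise equality of the two matrices then follows directly from the gap hypothesis: two diagonal entries differing by less than $\delta_L$ while being distinct left endpoints of $D$ would contradict the definition of $\delta_L$, so diagonal entries coincide, and the same reasoning with $\delta_R$ handles off-diagonal entries. Consequently $\LCAmatrix(\MT(f)) = \LCAmatrix(\MT(g))$, and Lemma 2.9 of \cite{gasparovic2019intrinsic} gives $\MT(f) \cong \MT(g)$. The main obstacle is justifying that the optimal labellings from Gasparovic's result can be taken to be bijective on leaves; the framework of \cite{gasparovic2019intrinsic} in principle permits repetitions, and a repeated leaf label would produce an off-diagonal entry equal to a birth value rather than a death, breaking the clean separation by $\delta_L$ versus $\delta_R$. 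Since both trees have the same multiset of leaf heights, a short combinatorial reduction within Gasparovic's framework, removing matched repetitions on both sides without increasing the entry-wise maximum, should suffice to close this gap.
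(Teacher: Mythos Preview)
Your approach is essentially identical to the paper's: establish cellularity via Lemma~\ref{lemma_finitely_many_local_minima_weaker_version} and Theorem~\ref{theorem_merge_tree_are_cellulars}, bound $d_I$ by $\|f-g\|_\infty$ via stability, invoke Corollary~4.4 of~\cite{gasparovic2019intrinsic} to realize $d_I$ as an entry-wise LCA-matrix discrepancy, observe that diagonal entries are leaf heights (left endpoints of $D$) and off-diagonal entries are branch-point heights (right endpoints of $D$), and conclude equality of matrices and hence isomorphism via Lemma~2.9 of~\cite{gasparovic2019intrinsic} (which the paper wraps into Proposition~\ref{prop:intdist}). The repetition issue you flag is genuine, but the paper's own proof glosses over exactly the same point---it simply asserts that non-diagonal entries are projected values of branch points without addressing repeated labels---so your scrupulousness here is a virtue rather than a deficiency relative to the paper.
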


\begin{proof}
Suppose~$(\cellMTree,\pi)$ is a cellular merge tree with barcode~$D$. Then for any branch point~$v$ in~$\cellMTree$, the map~$H_0(\pi^{-1}(-\infty,\pi(v)-\epsilon]) \to H_0(\pi^{-1}(-\infty,\pi(v)])$ must have a nontrivial kernel for all sufficiently small~$\epsilon$. Therefore,~$\pi(v)$ must be the right endpoint of an interval of~$D$. Similarly, for any leaf~$l$ in~$\cellMTree$, the map~$H_0(\pi^{-1}(-\infty,\pi(l)-\epsilon]) \to H_0(\pi^{-1}(-\infty,\pi(l)])$ does not have full image for all sufficiently small~$\epsilon$, and so~$\pi(l)$ must be the left endpoint of an interval of~$D$.

By Lemma \ref{lemma_finitely_many_local_minima_weaker_version} and Theorem \ref{theorem_merge_tree_are_cellulars},~$\MT(f)$ and~$\MT(g)$ are both cellular. By Corollary 4.4 of~\cite{gasparovic2019intrinsic} (which in turn depends upon \cite[Theorem 1]{touli2018fpt}), there exist two labellings  (possibly with repetitions) of the cellular merge trees~$\MT(f)$ and~$\MT(g)$ with the same number of indices of leaves, such that:
\begin{equation*}
    d_I(\MT(f),\MT(g)) = \min_{1\leq i,j \leq N} |\LCAmatrix(\MT(f))_{ij} - \LCAmatrix(\MT(g))_{ij}|.
\end{equation*}
 The diagonal entries of~$\LCAmatrix(\MT(f))$ are projected values of leaves of~$\MT(f)$, and non-diagonal entries are projected values of branch points. Similarly for~$\LCAmatrix(\MT(g))$. However, from the first paragraph, the projected values of leaves and branch points of~$\MT(f)$ and~$\MT(g)$ are the values of left and right interval endpoints of~$D$ respectively. Thus if we assume~$d_I(\MT(f),\MT(g))$ is positive, then in fact, by our above equation,
\begin{equation*}
    d_I(\MT(f),\MT(g)) \geq \min(\delta_L,\delta_R).
\end{equation*}
Hence the stability theorem for the interleaving distance \cite[Theorem 2]{morozov2013interleaving} gives us that
\begin{equation*}
    \min(\delta_L,\delta_R) \leq d_I(\MT(f),\MT(g)) \leq \|f-g\|_\infty < \min(\delta_L,\delta_R),
\end{equation*}
a contradiction. So~$d_I(\MT(f),\MT(g)) = 0$. Since~$\MT(f)$ and~$\MT(g)$ are both cellular, Proposition~\ref{prop:intdist} implies that~$\MT(f)$ and~$\MT(g)$ are isomorphic.
\end{proof}

\section{Functions on a tree with a given merge tree}
\label{section_functions_with_given_merge_tree}
In this section we work with cellular merge trees whose branch points are non-degenerate:
\begin{definition}
\label{definition_non_degenerate_merge_tree}
A cellular merge tree~$(\cellMTree,\pi)$ is {\em generic} if it is a binary tree (each internal node has two children), and all leaves have distinct projection values.
\end{definition}

In particular if~$(\cellMTree,\pi)$ has~$n$ leaves then it has~$(n-1)$ internal nodes. For the rest of the section we fix a generic labelled cellular merge tree~$\cellMTree$, and for each internal node~$\Labelnode\in \cellMTree$, we fix an arbitrary labelling~$\leftChild \Labelnode$ and~$\rightChild \Labelnode$ of its two children. In particular~$\leftChild\Labelnode \preceq \Labelnode$, where as a reminder this notation means that~$\leftChild \Labelnode$ is a descendant of~$\Labelnode$, and likewise~$\rightChild\Labelnode \preceq \Labelnode$. For~$1\leq i \leq n$ we denote by~$m_i:=\pi(\Labelleaf_i)$ the value of a leaf. Let~$\tree$ be a geometric tree. 

\begin{proposition}
\label{proposition_functions_with_given_merge_tree_on_tree}
Let~$f:X\rightarrow \R$ be a continuous function with finitely many local minima. Then $\MT(f)$ is isomorphic to~$\cellMTree$ if and only if both the following conditions are satisfied:
\begin{enumerate}
    \item The function~$f$ has~$n$ local minima~$X_1,\cdots,X_n$ with values~$m_1,\cdots,m_n$.
    \item For any~$i\neq j$, the maximum of~$f$ restricted on~$\ShortPath(X_i,X_j)$ equals~$\LCAmatrix(\cellMTree)_{ij}$.
\end{enumerate}
\end{proposition}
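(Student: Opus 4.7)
The plan is to reduce the claim to Theorem~\ref{theorem_merge_tree_are_cellulars} by showing that condition~(2) of the proposition is equivalent to condition~2 of that theorem, namely $t_{ij} = \LCAmatrix(\cellMTree)_{ij}$ where $t_{ij} = \inf\{t \mid (X_i,t)\sim (X_j,t)\}$. Since condition~(1) of the proposition is identical to condition~1 of the theorem, the only substantive task is to establish the identity
\[
t_{ij} \;=\; \max f|_{\ShortPath(X_i, X_j)},
\]
which is a property specific to the tree structure of $\tree$. Note that $\tree$ is compact and connected, so Theorem~\ref{theorem_merge_tree_are_cellulars} applies once this identity is verified.

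First I would show $t_{ij} \leq \max f|_{\ShortPath(X_i, X_j)}$. Writing $M := \max f|_{\ShortPath(X_i, X_j)}$, the entire shortest path lies in $f^{-1}(-\infty, M]$, so this path connects $X_i$ to $X_j$ inside that sublevel set. Hence $M$ belongs to $\{t \mid (X_i,t)\sim (X_j,t)\}$, giving $t_{ij} \leq M$.

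Second I would prove the reverse inequality $t_{ij} \geq M$ by contradiction. Suppose $t < M$, and assume $X_i$ and $X_j$ lie in the same connected component $C$ of $f^{-1}(-\infty, t]$. Since $C$ is a connected subset of a geometric tree, it is path-connected (as recalled in Section~\ref{section_merge_trees}), so we may choose a path $\gamma$ in $C$ from a point of $X_i$ to a point of $X_j$. By the tree property that any path between two disjoint closed connected subsets contains their shortest path in its image, we obtain $\ShortPath(X_i, X_j) \subseteq \im \gamma \subseteq C$. But then $f \leq t$ everywhere on $\ShortPath(X_i, X_j)$, contradicting $t < M = \max f|_{\ShortPath(X_i, X_j)}$. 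Hence no $t < M$ lies in $\{t \mid (X_i,t)\sim (X_j,t)\}$, so $t_{ij} \geq M$.

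With the identity $t_{ij} = \max f|_{\ShortPath(X_i, X_j)}$ in hand, both directions of the proposition follow immediately from Theorem~\ref{theorem_merge_tree_are_cellulars}: assuming~(1) and~(2), condition~2 of the theorem holds since $t_{ij} = M = \LCAmatrix(\cellMTree)_{ij}$, so $\MT(f) \cong \cellMTree$; conversely, if $\MT(f) \cong \cellMTree$ then condition~2 of the theorem gives $t_{ij} = \LCAmatrix(\cellMTree)_{ij}$, and combined with the identity this yields~(2). I do not anticipate a serious obstacle here; the only delicate point is the appeal to the fact that connected subsets of a geometric tree are path-connected and that any such path between disjoint closed connected subsets contains the canonical shortest path, both of which are explicitly noted in the background section.
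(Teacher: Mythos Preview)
Your proposal is correct and follows essentially the same route as the paper: both reduce to Theorem~\ref{theorem_merge_tree_are_cellulars} and then establish the identity $t_{ij}=\max f|_{\ShortPath(X_i,X_j)}$. The only cosmetic difference is that the paper invokes Lemma~\ref{lemma_infimum_same_components_is_minimum} to replace the infimum by a minimum and then declares the identity ``clear,'' whereas you prove the two inequalities directly; your argument is self-contained but not genuinely different in strategy.
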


\begin{proof}
By Theorem~\ref{theorem_merge_tree_are_cellulars},~$\MT(f)$ is a cellular merge tree, and it is isomorphic to~$\cellMTree$ if and only if:
\begin{enumerate}
    \item The function~$f$ has~$n$ local minima~$X_1,\cdots,X_n$ with value~$m_1,\cdots,m_n$.
    \item For any~$1\leq i<j \leq n$, denoting~$t_{ij}=\inf\{t \mid (X_i,t)\sim (X_j,t)\}$, we have~$t_{ij}=   \LCAmatrix(\cellMTree)_{ij}$.
    \end{enumerate}
Furthermore by~Lemma~\ref{lemma_infimum_same_components_is_minimum} we can replace the infimum by a minimum in the definition of~$t_{ij}$. Since~$X$ is a geometric tree it is then clear that
\[\min\{t\mid X_i \text{ and } X_j \text{ are connected in } f^{-1}(-\infty,t]\}= \max f_{|\ShortPath(X_i,X_j) } \qedhere\]
\end{proof}

We arrive at our most useful characterisation of functions with a given merge tree. 

\begin{proposition}
\label{proposition_functions_with_given_merge_tree_on_tree_convex_hulls}
Let~$f:X\rightarrow \R$ be a continuous function with finitely many local minima. Then $\MT(f)$ is isomorphic to~$\cellMTree$ if and only if both the following conditions are satisfied:
\begin{enumerate}
    \item The function~$f$ has~$n$ local minima~$X_1,\cdots,X_n$ with values~$m_1,\cdots,m_n$.
    \item Given a node~$\Labelnode$ (possibly a leaf), let $X_f = (X_1,\ldots,X_m)$ and
\[\conv_{X_f}(\Labelnode):=\conv\bigg\{X_i \mid \text{ leaf } \Labelleaf_i \text{ is a descendent of } \Labelnode \text{ in }\MT(f) \bigg\}\subseteq X.\]
Then, for any~$1\leq k \leq n-1$, we have:
\begin{equation}
    \label{eq_max_f_shortest_path_conv_hulls}
    \max \big\{f(x) \mid x \in \ShortPath(\conv_{X_f}(\leftChild\Labelnode_k),\conv_{X_f}(\rightChild\Labelnode_k))\big\}=\pi(\Labelnode_k),
\end{equation}
and the maximum is attained at a unique connected subset~$Y_k$ of the shortest path.
\end{enumerate}
\end{proposition}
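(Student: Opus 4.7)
My plan is to reduce both directions of the equivalence to Proposition~\ref{proposition_functions_with_given_merge_tree_on_tree}, which characterizes $\MT(f) \cong \cellMTree$ via the pairwise condition $\max f|_{\ShortPath(X_i, X_j)} = \pi(\LCA(\Labelleaf_i, \Labelleaf_j))$. Condition~1 is identical in both statements, so it suffices to translate between the pairwise shortest-path condition and the convex-hull condition at each internal node. The tree-combinatorial observation used throughout is that whenever the two subtree hulls $\conv_{X_f}(\leftChild \Labelnode)$ and $\conv_{X_f}(\rightChild \Labelnode)$ are disjoint, the shortest path $\ShortPath(X_i, X_j)$ between a left-descendant minimum $X_i$ and a right-descendant minimum $X_j$ decomposes uniquely into a path in $\conv_{X_f}(\leftChild \Labelnode)$, the path $P_\Labelnode := \ShortPath(\conv_{X_f}(\leftChild \Labelnode), \conv_{X_f}(\rightChild \Labelnode))$, and a path in $\conv_{X_f}(\rightChild \Labelnode)$.

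For the ``if'' direction, I induct on the nodes of $\cellMTree$ from leaves to root, proving $\max f|_{\conv_{X_f}(\Labelnode)} = \pi(\Labelnode)$. At an internal node $\Labelnode$ with children $c_1, c_2$, the inductive hypothesis gives $\max f|_{\conv_{X_f}(c_l)} = \pi(c_l) < \pi(\Labelnode)$ for $l = 1, 2$, which forces the two subhulls to be disjoint (else $\ShortPath$ in condition~2 would only meet values strictly below $\pi(\Labelnode)$). Condition~2 then supplies $\max f|_{P_\Labelnode} = \pi(\Labelnode)$, closing the step since $\conv_{X_f}(\Labelnode)$ equals $\conv_{X_f}(c_1) \cup P_\Labelnode \cup \conv_{X_f}(c_2)$. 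Combined with the decomposition above, this yields $\max f|_{\ShortPath(X_i, X_j)} = \pi(\Labelnode)$ whenever $\Labelnode = \LCA(\Labelleaf_i, \Labelleaf_j)$, so Proposition~\ref{proposition_functions_with_given_merge_tree_on_tree} concludes $\MT(f) \cong \cellMTree$. For the ``only if'' direction, I first use the pairwise condition directly to compute $\max f|_{\conv_{X_f}(\Labelnode)} = \pi(\Labelnode)$: the hull is a union of the sets $X_i$ and paths $\ShortPath(X_i, X_j)$ for leaves descended from $\Labelnode$, whose respective maxima are $m_i$ and $\pi(\LCA(\Labelleaf_i, \Labelleaf_j))$, all bounded by $\pi(\Labelnode)$ with equality achieved by any pair split across the two children. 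Applied to $\leftChild \Labelnode_k$ and $\rightChild \Labelnode_k$, this forces the subhulls to be disjoint (any common point would yield a sub-maximal path across children, contradicting the pairwise condition) and, via the decomposition, gives $\max f|_{P_{\Labelnode_k}} = \pi(\Labelnode_k)$.

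The main obstacle is proving that $Y_k := P_{\Labelnode_k} \cap f^{-1}(\pi(\Labelnode_k))$ is connected. I argue by contradiction: parametrize $P_{\Labelnode_k}$ by $\gamma:[0,1] \to X$ and suppose there exist $s_1 < s' < s_2$ with $\gamma(s_1), \gamma(s_2) \in Y_k$ and $f(\gamma(s')) < \pi(\Labelnode_k)$. Let $K$ be the connected component of $\gamma(s')$ in $f^{-1}(-\infty, f(\gamma(s'))]$. No local minimum $X_i$ descended from $\Labelnode_k$ can lie in $K$: such an $X_i$ sits in $\conv_{X_f}(\leftChild \Labelnode_k)$ or $\conv_{X_f}(\rightChild \Labelnode_k)$, and the shortest path in $X$ from $X_i$ to $\gamma(s')$ must traverse $\gamma(s_1)$ or $\gamma(s_2)$, both of value $\pi(\Labelnode_k)$, which exceeds the cutoff. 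By Lemma~\ref{lem:minrep}, $K$ nevertheless contains some local minimum $X_l$, whose corresponding leaf $\Labelleaf_l$ in $\cellMTree$ is therefore not a descendant of $\Labelnode_k$. On the other hand, the connected set $\conv_{X_f}(\leftChild \Labelnode_k) \cup P_{\Labelnode_k} \cup \conv_{X_f}(\rightChild \Labelnode_k)$ lies in $f^{-1}(-\infty, \pi(\Labelnode_k)]$, so $\gamma(s')$ belongs to the merged component $\Omega$ corresponding to $\Labelnode_k$ under $\MT(f) \cong \cellMTree$; by nesting of sublevel sets, $X_l \in \Omega$ as well. Since the isomorphism identifies local minima lying in $\Omega$ with the leaves of $\cellMTree$ descended from $\Labelnode_k$ (via Remark~\ref{remark_descendant_merge_tree}), this contradicts the previous conclusion and forces $Y_k$ to be connected.
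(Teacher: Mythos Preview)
Your proof is correct and follows essentially the same architecture as the paper: both directions are reduced to Proposition~\ref{proposition_functions_with_given_merge_tree_on_tree} via an induction on the nodes of~$\cellMTree$, using the decomposition $\conv_{X_f}(\Labelnode) = \conv_{X_f}(\leftChild\Labelnode) \cup P_{\Labelnode} \cup \conv_{X_f}(\rightChild\Labelnode)$ together with the bound $\max f|_{\conv_{X_f}(\Labelnode)} = \pi(\Labelnode)$.

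The one substantive difference is in the uniqueness of~$Y_k$. The paper argues that two separated maximal sets on $P_{\Labelnode_k}$ would create, for $t$ just below $\pi(\Labelnode_k)$, a third connected component (the segment trapped between them) distinct from those of $\conv_{X_f}(\leftChild\Labelnode_k)$ and $\conv_{X_f}(\rightChild\Labelnode_k)$; at $t=\pi(\Labelnode_k)$ these three components merge simultaneously, contradicting that~$\cellMTree$ is binary. Your argument instead tracks a local minimum inside that trapped component via Lemma~\ref{lem:minrep} and derives a contradiction from Remark~\ref{remark_descendant_merge_tree}: the minimum must correspond to a leaf descending from~$\Labelnode_k$ (since it lies in~$\Omega$) yet cannot be any of the $X_i$ with $\Labelleaf_i \preceq \Labelnode_k$ (since those are cut off by $\gamma(s_1)$ or $\gamma(s_2)$). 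The paper's version is shorter and appeals directly to genericity of~$\cellMTree$; yours is more explicit about which structural hypothesis is violated and does not invoke the binary condition per se, only the labelling bijection between leaves and local minima. Both are valid.
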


\begin{proof}
Let~$f$ satisfy the conditions of the statement. One is condition~1 of Proposition~\ref{proposition_functions_with_given_merge_tree_on_tree}, and an induction on nodes of~$\cellMTree$ in increasing order of~$\pi$-value immediately yields condition 2 as well, so~$\MT(f)\sim \cellMTree$. Conversely, condition 2 in Proposition~\ref{proposition_functions_with_given_merge_tree_on_tree} is equivalent to
\begin{equation*}
   \forall \Labelnode_k\in \cellMTree, \forall \Labelleaf_i\preceq \leftChild\Labelnode_k \text{ and } \Labelleaf_j \preceq \rightChild\Labelnode_k, \,  \max f_{|\ShortPath(X_i,X_j) }= \pi(\Labelnode_k).
\end{equation*}
From this, an immediate induction yields that, for each node~$\Labelnode_k\in \cellMTree$, the value~$\max f_{|\conv_{X_f}(\Labelnode_k)}$ equals~$\pi(\Labelnode_k)$ and is attained on~$\ShortPath(\conv_{X_f}(\leftChild\Labelnode_k), \conv_{X_f}(\rightChild\Labelnode_k))$.

Assume, seeking contradiction, that the maximum~$\pi(\Labelnode_k)$ is attained at two distinct connected subsets~$Y_1$ and~$Y_2$ of the shortest path. Then, inside~$f^{-1}(-\infty, t]$ for~$t<\pi(\Labelnode_k)$, the connected component of elements between~$Y_1$ and~$Y_2$ is distinct from that of elements of~$\conv_{X_f}(\leftChild\Labelnode_k)$ and~$\conv_{X_f}(\rightChild\Labelnode_k)$, and at~$t=\pi(\Labelnode_k)$ we thus have three or more connected components of the sublevel-sets of~$f$ which are identified, contradicting that~$\cellMTree$ is generic. 
\end{proof}

\section{Retraction of the fiber to configuration space on a tree}
\label{section_characterizing_MT_inverse}
Let~$\tree$ be a geometric tree. We metrize spaces of functions on~$X$ via the supremum norm. In this section we fix a generic labelled cellular merge tree~$\cellMTree$ and we analyze the subspace of functions~$f$ in the fiber:
\[\fibT= \bigg\{f:X\rightarrow \R, \, \MT(f)=\cellMTree \bigg\}.\]

We assume without loss of generality that~$\cellMTree$ has only branch points and leaves as nodes. We will simplify~$\fibT$ by means of a series of homotopy equivalences
\begin{equation*}
    \fibT \isomto \critconf(X,\cellMTree) \isomto \minconf(X,\cellMTree) \isomto \conf(X,\cellMTree),
\end{equation*}
where the spaces~$\critconf(X,\cellMTree)$,~$\minconf(X,\cellMTree)$, and~$\conf(X,\cellMTree)$ are configuration spaces tracking the local minima and saddles of a function~$f\in \fibT$, detailed hereafter.

Consider~$\tilde{X} = (X_1,\ldots,X_n) \subseteq X^n$. Motivated by the definition we made in Proposition \ref{proposition_functions_with_given_merge_tree_on_tree_convex_hulls}, for a node~$\Labelnode$ of~$\cellMTree$, possibly a leaf, we define:
\[\conv_{\tilde{X}}(\Labelnode):=\conv\Big\{X_i \mid \text{ leaf } \Labelleaf_i \text{ is a descendent of }\Labelnode \Big\}\subseteq X.\]
We define~$\conv_x(\Labelnode)$ for~$x = (x_1,\ldots,x_n)\in X^n$ similarly. We illustrate this construction in Figure~\ref{fig:convv}.

\begin{figure}[htbp]
\centering
\resizebox{.9\textwidth}{!}{
\includegraphics{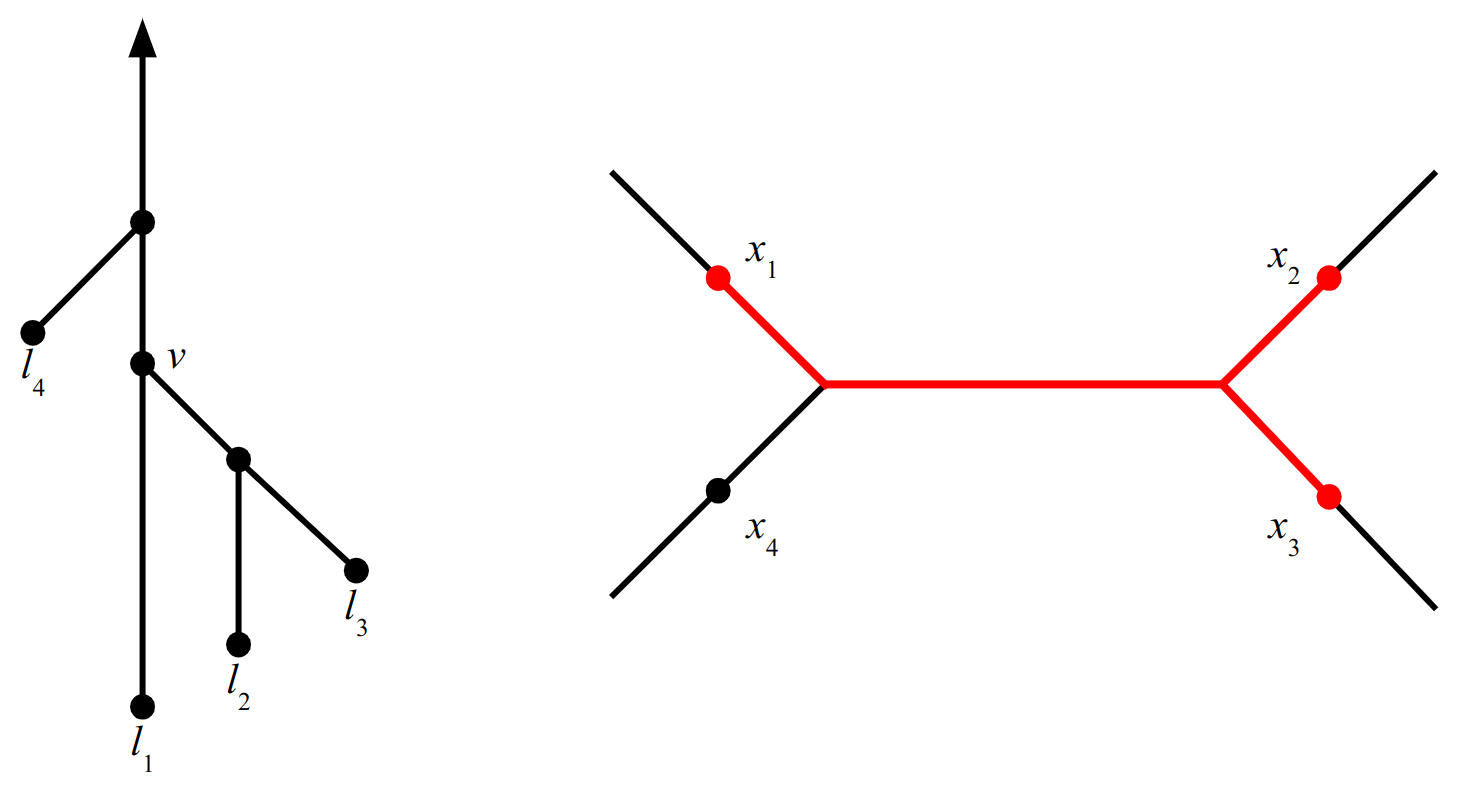}
}

    \caption{(Left) A cellular merge tree~$\cellMTree$ with four leaves. (Right) An element $x = (x_1,x_2,x_3,x_4)$ of~$\conf_4(X)$, for~$\tree$ a tree. Here, the set $\conv_x(\Labelnode)$ associated to a node~$\Labelnode\in \cellMTree$ is highlighted.}
    \label{fig:convv}
\end{figure}

Denote the usual ordered configuration space on~$n$ points by~$\conf_n(X)$. The space~$\conf(X,\cellMTree)$ is given by
\[\conf(X,\cellMTree):=\bigg\{x = (x_1,\cdots,x_n) \in \conf_n(X) \mid \conv_x( \Labelnode) \cap \conv_x( \Labelnode') \neq \emptyset \Rightarrow \Labelnode\preceq  \Labelnode' \text{ or }  \Labelnode'\preceq  \Labelnode \bigg\}.\]
A configuration~$(x_1,\cdots,x_n)\in \conf(X,\cellMTree)$ should be thought of as the points where a function~$f$ with merge tree~$\cellMTree$ achieves its local minima~$m_1,\cdots,m_n$. Because in general a function~$f$ with merge tree~$\cellMTree$ could achieve its minima on arbitrary closed sets rather than points, it is natural to extend~$\conf(X,\cellMTree)$ to the following configuration space of closed sets:
\begin{align*}
    \minconf(X,\cellMTree) := & \bigg\{\tilde{X}=(X_1,\cdots,X_n) \subseteq X^n \text{ disjoint connected closed sets}\mid\\ 
    &\conv_{\tilde{X}}( \Labelnode) \cap \conv_{\tilde{X}}( \Labelnode') \neq \emptyset \Rightarrow  \Labelnode\preceq  \Labelnode' \text{ or }  \Labelnode'\preceq  \Labelnode \bigg\}.
\end{align*}
Note that a set configuration~$\tilde{X}=(X_1,\cdots,X_n)\in \minconf(\tree,\cellMTree)$ induces convex hulls~$\conv_{\tilde{X}}(\Labelnode)$ for any node~$\Labelnode\in \cellMTree$. For~$\Labelnode$ an internal node with children~$\leftChild\Labelnode$ and~$\rightChild\Labelnode$, we will also consider the following subset of~$X$:
\[\ShortPath_{\tilde{X}}(v) := \ShortPath\big(\conv_{\tilde{X}}(\leftChild\Labelnode),\conv_{\tilde{X}}(\rightChild\Labelnode)\big). \]
We are now ready introduce our last configuration space~$\critconf(X,\cellMTree)$ of closed sets where we also record saddles of a function~$g\in \fibT$:
\begin{align*}
    \critconf(X,\cellMTree) := & \bigg\{(\tilde{X},\tilde{Y})=(X_1,\cdots,X_n,Y_1,\cdots, Y_{n-1}) \subseteq X^{2n-1} \text{ disjoint connected closed sets}\mid \\
    &(X_1,\cdots,X_n)\in \minconf(\tree,\cellMTree),\\
    & \forall j, \, Y_j \cong [0,1], \, Y_j \text{ subset of the interior of } \ShortPath_{\tilde{X}}(\Labelnode_j) \bigg\}.
\end{align*}
Note that the Hausdorff distance inherited from the ground metric on~$X$ induces topologies on configuration spaces of closed sets, hence subset topologies for our spaces~$\critconf(X,\cellMTree),\minconf(X,\cellMTree)$ and~$\conf(X,\cellMTree)$.

\begin{theorem}
\label{theorem_homotopy_equivalence_configuration_spaces}
The spaces~$\fibT, \critconf(X,\cellMTree), \minconf(X,\cellMTree)$, and~$\conf(X,\cellMTree)$ are homotopy equivalent.
\end{theorem}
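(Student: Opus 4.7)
The plan is to establish the three claimed homotopy equivalences in the suggested sequence, progressively stripping structure from $\fibT$ down to the bare point configuration space $\conf(\tree,\cellMTree)$.

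For the first equivalence $\fibT \simeq \critconf(\tree,\cellMTree)$, I would define $\Phi: \fibT \to \critconf(\tree,\cellMTree)$ by sending $f$ to the tuple $(X_1,\ldots,X_n,Y_1,\ldots,Y_{n-1})$, where the $X_i$ are the local minima of $f$ and $Y_j$ is the unique connected subset of the shortest path $\ShortPath(\conv_{X_f}(\leftChild\Labelnode_j),\conv_{X_f}(\rightChild\Labelnode_j))$ on which $f$ attains the value $\pi(\Labelnode_j)$. Proposition~\ref{proposition_functions_with_given_merge_tree_on_tree_convex_hulls} guarantees that $\Phi$ is well defined. To produce a homotopy inverse, for each configuration $(\tilde{X},\tilde{Y})$ I would construct a canonical "pagoda" function $g_{(\tilde{X},\tilde{Y})}$ that equals $m_i$ on $X_i$, equals $\pi(\Labelnode_j)$ on $Y_j$, and interpolates affinely and monotonically along each edge of the natural subdivision of $\tree$ cut out by the $X_i$ and $Y_j$. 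By construction $g_{(\tilde{X},\tilde{Y})}$ satisfies the two conditions of Proposition~\ref{proposition_functions_with_given_merge_tree_on_tree_convex_hulls}, so the assignment defines a continuous section $\Psi : \critconf(\tree,\cellMTree) \to \fibT$. To show $\Psi \circ \Phi \simeq \mathrm{id}$, I would use the straight-line homotopy $H_s(f) := (1-s)f + s\,g_{\Phi(f)}$. The key observation is that $f$ and $g_{\Phi(f)}$ agree on all critical data of~$f$ (the values on the $X_i$ and $Y_j$), so each intermediate $H_s(f)$ has exactly these same local minima and saddle sets, and condition 2 of Proposition~\ref{proposition_functions_with_given_merge_tree_on_tree_convex_hulls} is preserved under convex combinations when both endpoints satisfy it.

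For the second equivalence $\critconf(\tree,\cellMTree) \simeq \minconf(\tree,\cellMTree)$, I would use the forgetful map $(\tilde{X},\tilde{Y}) \mapsto \tilde{X}$. Its fiber over a fixed $\tilde{X}\in \minconf(\tree,\cellMTree)$ is the space of tuples $(Y_1,\ldots,Y_{n-1})$ of pairwise disjoint closed subintervals, one in the interior of each open arc $\ShortPath_{\tilde{X}}(\Labelnode_j)$. The disjointness between $Y_j$ and $Y_{j'}$ for $j\neq j'$ is automatic: the arcs $\ShortPath_{\tilde{X}}(\Labelnode_j)$ for distinct internal nodes meet only at endpoints of one another or are disjoint, by the partial-order condition defining $\minconf(\tree,\cellMTree)$. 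Each factor is therefore parametrised by an open triangle $\{(a,b) : a < b\}$ inside an open interval, hence contractible, and the full fiber is a product of contractible spaces. I would then conclude via a standard argument: either by showing the forgetful map is a Serre fibration with contractible fibers, or, more concretely, by producing a continuous section that picks the midpoint subinterval of each arc and contracts the ambient space to the section via a straight-line deformation in the interval parameters.

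For the third equivalence $\minconf(\tree,\cellMTree) \simeq \conf(\tree,\cellMTree)$, I would exhibit the inclusion $\iota$ sending $(x_1,\ldots,x_n)$ to $(\{x_1\},\ldots,\{x_n\})$ as a deformation retract. Every $X_i \in \minconf(\tree,\cellMTree)$ is a connected closed subset of a geometric tree, i.e.\ a subtree, and admits a canonical center point $c(X_i) \in X_i$ (the unique point minimizing the maximal geodesic distance to any other point of $X_i$), which depends continuously on $X_i$ in the Hausdorff metric. The retraction $r : \minconf \to \conf$ would send $\tilde{X} \mapsto (c(X_1),\ldots,c(X_n))$, and the homotopy $\iota \circ r \simeq \mathrm{id}$ would be constructed by linearly shrinking each $X_i$ inside itself toward $c(X_i)$ along the tree metric; the convex hulls $\conv_{\tilde{X}}(\Labelnode)$ shrink monotonically during this deformation, so their intersection pattern and the encoded partial order $\preceq$ are preserved throughout.

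The main obstacle will be the first equivalence. Defining $g_{(\tilde{X},\tilde{Y})}$ so that it is genuinely continuous in the Hausdorff topology on $\critconf(\tree,\cellMTree)$ is delicate because the combinatorics of the natural subdivision of $\tree$ can change as the configuration moves, and proving that the straight-line homotopy $H_s$ never leaves $\fibT$ amounts to ruling out spurious new local minima or saddles at intermediate times—this will require a careful case analysis of how $f$ and $g_{\Phi(f)}$ compare on each edge of the subdivision, exploiting the rigidity of the values they are forced to share on the $X_i$ and $Y_j$.
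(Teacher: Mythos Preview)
Your proposal follows essentially the same three-step strategy as the paper, with only cosmetic differences: the paper presents the steps in reverse order and, for the retraction $\minconf\to\conf$, shrinks each $X_i$ toward its point closest to a fixed root leaf of $\tree$ rather than toward its metric center. Your identification of the straight-line homotopy in the $\fibT\simeq\critconf$ step as the main technical hurdle is apt; the paper dispatches it by the same appeal to Proposition~\ref{proposition_functions_with_given_merge_tree_on_tree_convex_hulls}.
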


The proof of Theorem~\ref{theorem_homotopy_equivalence_configuration_spaces} decomposes as the construction of three consecutive homotopy equivalences:
\begin{equation*}
    \fibT \isomto \critconf(X,\cellMTree) \isomto \minconf(X,\cellMTree) \isomto \conf(X,\cellMTree).
\end{equation*}
\begin{proof}
For the proof we will fix a metric~$d$ on~$X$.

\subsubsection*{Step 1:~$\minconf(X,\cellMTree)\isomto \conf(X,\cellMTree)$}

We choose an arbitrary leaf~$\tau\in X$ as the root of the tree, hence for any connected closed subset~$A\subseteq X$, the point~$\tau(A)$ which is closest to the root is uniquely defined. We can continuously contract~$A$ to~$\tau(A)$ with a family~$(A_t)_{0\leq t \leq 1}$ of enclosed subsets:
\[A_t:= \big\{ x\in A \mid d(a,\tau(A))\leq (1-t)\mathrm{diam}(A)\big\}.\]

Given a configuration of connected closed sets~$(X_1,\cdots, X_n)$, the continuous contraction of each~$X_i$ to~$\tau(X_i)$ defines a deformation retract of~$\minconf(\tree,\cellMTree)$ to~$\conf(\tree,\cellMTree)$:
\[\homotopy: \big(t,(X_1,\cdots,X_n)\big)\in [0,1]\times \minconf(X,\cellMTree)\longmapsto ((X_1)_t,\cdots,(X_n)_t)\in \minconf(X,\cellMTree)\]
Indeed, under this map, any~$\tilde{X}\in \minconf(X,\cellMTree)$ is mapped to an element~$\homotopy(t,\tilde{X})$ that satisfies the condition for being in~$\minconf(X,\cellMTree)$: for any nodes~$\Labelnode,\Labelnode'\in \cellMTree$, if~$\conv_{\homotopy(t,\tilde{X})}(\Labelnode)\cap \conv_{\homotopy(t,\tilde{X})}(\Labelnode')\neq \emptyset$, since~$\conv_{\homotopy(t,\tilde{X})}(\Labelnode)\subseteq \conv_{\tilde{X}}(\Labelnode)$ and~$\conv_{\homotopy(t,\tilde{X})}(\Labelnode')\subseteq \conv_{\tilde{X}}(\Labelnode')$, then we also have~$\conv_{\tilde{X}}(\Labelnode)\cap \conv_{\tilde{X}}(\Labelnode')\neq \emptyset$, and so~$\Labelnode\preceq \Labelnode'$ or~$\Labelnode'\preceq \Labelnode$, as desired.

\subsubsection*{Step 2:~$\critconf(X,\cellMTree) \isomto \minconf(X,\cellMTree)$}

We continue to use the root~$\tau\in X$. Let~$\tilde{X}=(X_1,\cdots,X_n)\in \minconf(X,\cellMTree)$ be a configuration of closed sets. Given~$1\leq j \leq (n-1)$, we have that~$\ShortPath_{\tilde{X}}(\Labelnode_j)$ is a closed segment~$[a_j,b_j]$ in~$X$, where by convention the extreme~$a_j$ is the closest to the root~$\tau$.

Let~$(s,t)$ be an element of the open standard simplex~$\Delta_2:=\{(s,t)\mid 0<s\leq t<1\}$. Define
\[Y_j^{s,t}:=\bigg\{ x\in [a_j,b_j] \mid  s \leq \frac{d(a_j,x)}{d(a_j,b_j)} \leq t   \bigg\}.\]
Note that, for varying~$1\leq j \leq (n-1)$, the shortest paths~$\ShortPath_{\tilde{X}}(\Labelnode_j)$ are disjoint from each other. Indeed, let's assume, seeking contradiction, that~$\ShortPath_{\tilde{X}}(\Labelnode)$ intersects~$\ShortPath_{\tilde{X}}(\Labelnode')$ for some distinct nodes~$\Labelnode,\Labelnode'\in \cellMTree$. We then have~$\conv_{\tilde{X}}(\Labelnode)\cap \conv_{\tilde{X}}(\Labelnode')\neq \emptyset$, and since~$\tilde{X}\in \minconf(X,\cellMTree)$, we can assume without loss of generality that~$\Labelnode$ is a descendant of~$\Labelnode'$. But then~$\conv_{\tilde{X}}(\Labelnode)$ is disjoint from~$\ShortPath_{\tilde{X}}(\Labelnode')$, and since~$\ShortPath_{\tilde{X}}(\Labelnode)\subseteq \conv_{\tilde{X}}(\Labelnode)$, we reach the contradiction $\ShortPath_{\tilde{X}}(\Labelnode)\cap \ShortPath_{\tilde{X}}(\Labelnode')=\emptyset$. Therefore the sets~$Y_j^{s_j,t_j}$ are disjoints from each other and from the sets~$X_i$, and we have the homeomorphism:
\[[(X_i)_{i=1}^n, (s_j,t_j)_{j=1}^{n-1}]\in \minconf(X,\cellMTree) \times (\Delta_2)^{n-1} \longmapsto [(X_i)_{i=1}^n, (Y_j^{s_j,t_j})_{j=1}^{n-1}] \in \critconf(X,\cellMTree).\]
The deformation retract of each copy of~$\Delta_2$ to a point gives us the homotopy equivalence from $\critconf(X,\cellMTree)$ to~$\minconf(X,\cellMTree)$.

\subsubsection*{Step 3:~$\fibT \isomto \critconf(X,\cellMTree)$}
To show that there is a homotopy equivalence between these two spaces, we will need to define a map which sends a function~$f$ to its local minima~$X_i(f)$ and saddles~$Y_j(f)$, see Figure 
\ref{fig:exsaddlemin}.

\begin{figure}[htbp]
\centering
\resizebox{.75\textwidth}{!}{
\includegraphics{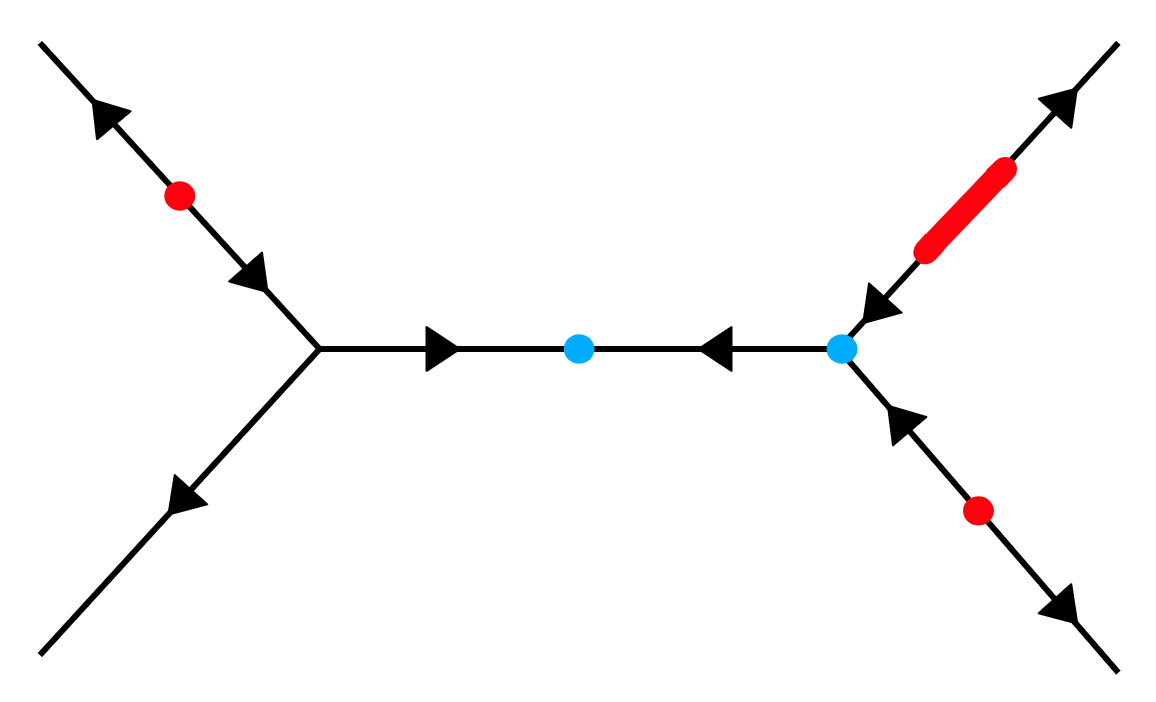}
}
    \caption{The local minima and saddles of a function on a tree. Directions of arrows on the depicted tree~$X$ indicate where the function is increasing. The three red regions indicate the locations of local minima~$X_i(f)$ while the two blue regions indicate the locations of saddles~$Y_j(f)$. Saddles do not need to be local maxima: in this example, one saddle is a local maximum while the other is not.}
    \label{fig:exsaddlemin}
\end{figure}

Let~$f\in \fibT$. For~$1\leq i \leq n$, let~$X_i(f)$ denote the connected subset of~$X$ where~$f$ achieves the minimum~$m_i$ corresponding to leaf~$\Labelleaf_i$ of~$\cellMTree$. The map~$f\mapsto X_i(f)$ is continuous.

Let~$\Labelnode,\Labelnode'\in \cellMTree$ be two nodes such that~$A:=\conv_{\tilde{X}}(\Labelnode)\cap \conv_{\tilde{X}}(\Labelnode')\neq \emptyset$. We assume without loss of generality that~$\pi(\Labelnode)\leq\pi(\Labelnode')$ and show that~$\Labelnode$ is a descendant of~$\Labelnode'$, i.e.~$\Labelnode\preceq\Labelnode'$. The node~$\Labelnode$, viewed as a connected component in~$f^{-1}(-\infty, \pi(\Labelnode)]$, contains~$A$. Since the connected component of~$f^{-1}(-\infty, \pi(\Labelnode')]$ represented by~$\Labelnode'$ also contains~$A$, viewing nodes once again as connected components of sublevel sets,~$\Labelnode$ is a subset of~$\Labelnode'$. So~$\Labelnode\preceq\Labelnode'$ (see Remark~\ref{remark_descendant_merge_tree}). Therefore~$\tilde{X}:= (X_1(f),\ldots,X_n(f))  \in \minconf(\tree,\cellMTree)$.

By Proposition~\ref{proposition_functions_with_given_merge_tree_on_tree_convex_hulls}, for~$1\leq j \leq n-1$, the restriction of~$f$ to~$\ShortPath_{\tilde{X}}(\Labelnode_j)$ attains its maximum~$\pi(\Labelnode_j)$ on a unique connected closed set~$Y_j(f)$. Since local minima~$X_i(f)$ vary continuously with~$f$, so do the convex hulls~$\conv_{\tilde{X}}(\Labelnode_j)$ and the shortest paths~$\ShortPath_{\tilde{X}}(\Labelnode_j)$ between them, and therefore the maps~$f\mapsto Y_j(f)$ are continuous, and so we have defined a continuous map:
\[F: f\in \fibT \longmapsto \big(X_1(f),\cdots,X_n(f),Y_1(f),\cdots,Y_{n-1}(f)\big) \in \critconf(X,\cellMTree).\]
To show that~$F$ is a homotopy equivalence, we define a map in the other direction. Let~$Z=(\tilde{X},\tilde{Y})=(X_1,\cdots,X_n,Y_1,\cdots,Y_{n-1})\in \critconf(X,\cellMTree)$. We construct a function~$f_Z$ 
 by induction on the nodes of~$\cellMTree$. To begin with, we define~$f_Z(X_i):= m_i$. Next, let~$\Labelnode_j\in \cellMTree$ be a node such that~$f_Z$ is already defined on~$\conv_{\tilde{X}}(\leftChild\Labelnode_j)$ and~$\conv_{\tilde{X}}(\rightChild\Labelnode_j)$. We extend~$f_Z$ on
\[\conv_{\tilde{X}}(\Labelnode_j)=\conv_{\tilde{X}}(\leftChild\Labelnode_j)\cup \conv_{\tilde{X}}(\rightChild\Labelnode_j)\cup \ShortPath_{\tilde{X}}(\Labelnode_j),\]
by letting~$f_Z(Y_j):=s_j$ and by linear interpolation on the rest of~$\ShortPath(\Labelnode_j)$. At the end of this process,~$f_Z$ is defined on the convex hull of all the~$X_i$, outside of which we let~$f_Z$ increase in all directions:
\[\forall x\in X\setminus \conv(X_1,\cdots,X_n),\qquad f_Z(x):= f_Z(\mathrm{proj}_{\conv(X_1,\cdots,X_n)}(x))+ d(x,\conv(X_1,\cdots,X_n)).\]
By Proposition~\ref{proposition_functions_with_given_merge_tree_on_tree_convex_hulls}~$\MT(f_Z)=T$ as desired.

This gives us a continuous map:
\[G: Z\in \critconf(X,\cellMTree) \longmapsto f_Z\in \fibT,\]
and clearly~$F\circ G= \mathrm{Id}$. Conversely, by Proposition~\ref{proposition_functions_with_given_merge_tree_on_tree_convex_hulls}, the straight-line interpolation
\[(t,f)\in [0,1]\times \fibT\longmapsto (1-t)f +t G\circ F(f)\]
is valued in~$\fibT$, hence it defines a homotopy equivalence~$G\circ F\sim \mathrm{Id}$.
\end{proof}

\section{The topology of functions on a tree with a given barcode}
\label{section_topological_consequences}
Let~$\tree$ be a geometric tree. In this section we fix a barcode~$D$ and analyze the space~$\fib$ of continuous pfd functions with barcode~$D$. For~$f\in \fib$ we let~$\fibf\subseteq \fib$ be the connected component of the fiber that contains~$f$. In the rest of the section we assume that~$D$ is generic in the following sense:

\begin{definition}
\label{definition_generic_barcode}
A barcode~$D$ is {\em generic} if it is finite and all its interval endpoints are distinct.
\end{definition}

Computing the fiber~$\fib$ can be solved by computing the fibers of two composite maps:
$$\xymatrix@1{\PH:  f \ar@{|->}[rr]^-{\MT} && \cellMTree \ar@{|->}[rr] && D }$$
The fiber of the second map is known from~\cite{curry2018fiber}: the number of merge trees giving rise to a generic barcode~$D$ is finite and computed computed in~\cite[Theorem~4.8]{curry2018fiber}. Therefore it remains to analyze functions with a given merge tree~$\cellMTree$. 

We first identify~$\fibT$ with a connected component of~$\fib$ in subsection~\ref{subsection_counting_connected_components}, and then, in subsection~\ref{subsection_topology_connected_components}, we derive some topological properties of these components. Before we begin we first record the following two useful properties.

\begin{proposition}
\label{prop_generic_barcode_implies_generic_merge_tree}
If~$D$ is a generic barcode and~$f\in \fib$, then the merge tree~$\MT(f)$ is a generic cellular merge tree.
\end{proposition}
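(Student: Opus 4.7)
The plan is to show that $\MT(f)$ is cellular and then verify the two defining properties of genericity, namely binarity and distinct leaf values, for the resulting cellular merge tree.

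First I would establish cellularity. Since $D=\PH(f)$ is finite, Lemma~\ref{lemma_finitely_many_local_minima_weaker_version} gives that $f$ has finitely many local minima. As $X$ is a geometric tree it is compact and connected, so Theorem~\ref{theorem_merge_tree_are_cellulars} applies and $(\MT(f),\pi_f)$ is isomorphic to some cellular merge tree $(\cellMTree,\pi)$, which by the convention adopted at the start of Section~\ref{section_characterizing_MT_inverse} I take to have only branch points and leaves as nodes.

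Next I would identify the $\pi$-values of leaves and branch points of $\cellMTree$ with endpoints of $D$, refining the argument from the first paragraph of the proof of Proposition~\ref{prop_small_distance_functions_implies_same_merge_trees}. By the structure theorem~\eqref{eq_decomposition_theorem}, for small $\epsilon > 0$ the cokernel of $H_0(\pi^{-1}(-\infty, t - \epsilon]) \to H_0(\pi^{-1}(-\infty, t])$ has dimension equal to the number of intervals of $D$ with left endpoint $t$, while its kernel has dimension equal to the number of intervals of $D$ with right endpoint $t$. On the other hand, from the cellular structure of $\cellMTree$ the same cokernel dimension equals the number of leaves at height $t$; and if $v$ is the unique branch point (if any) at height $t$, with $k$ children, then $k$ sublevel components merge into one at $t$, so the kernel dimension equals $k - 1$.

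Since $D$ is generic, its left endpoints are pairwise distinct, so at each value $t$ the cokernel above has dimension at most one, meaning at most one leaf sits at height $t$; hence all leaves of $\cellMTree$ have distinct projection values. Its right endpoints are also pairwise distinct, so at each branch point $v$ the kernel above has dimension exactly one, forcing $k - 1 = 1$, i.e., $v$ is binary. This establishes that $\cellMTree$ is generic.

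The main obstacle will be the bookkeeping linking the local combinatorics at a branch point (the number $k$ of children) to the algebraic rank-drop of the $H_0$ map at that level. This count is standard for $\PH_0$ of a cellular merge tree, but one must be careful about which cellular representative of $\MT(f)$ is chosen: degree-two internal vertices would be invisible to the sublevel set homology and so would silently violate binarity, which is precisely why the WLOG convention of Section~\ref{section_characterizing_MT_inverse} is invoked at the outset.
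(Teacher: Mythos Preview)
Your argument is correct and follows the paper's proof essentially verbatim: cellularity from Lemma~\ref{lemma_finitely_many_local_minima_weaker_version} and Theorem~\ref{theorem_merge_tree_are_cellulars}, then distinct leaf values from distinct left endpoints of~$D$ and binarity from distinct right endpoints, read off from the rank behaviour of the $H_0$ maps. One small wrinkle in your write-up: you compute the kernel dimension as $k-1$ only under the hypothesis that there is a \emph{unique} branch point at height~$t$, but you never rule out several branch points sharing a height; the fix is immediate (multiple branch points would force kernel dimension $\geq 2$, already excluded by genericity), and indeed the paper glosses over the same point.
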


\begin{proof}
Since~$D$ is finite, by Lemma~\ref{lemma_finitely_many_local_minima_weaker_version}~$f$ has finitely many local minima, and in turn~$(\MT(f),\pi_f)$ is cellular by Theorem~\ref{theorem_merge_tree_are_cellulars}. Since~$\PH(\pi_f)=\PH(f)$ has distinct interval endpoints, no more than two connected subsets can merge at a time in the sublevel-sets of~$\pi_f$, hence~$\MT(f)$ is a binary tree. Similarly, no two leaves of~$\MT(f)$ can have the same value of~$\pi_f$, as this would force~$\PH(f)$ to have a repeated left endpoint.
\end{proof}

\begin{proposition}
\label{prop_fiber_merge_tree_non_empty}
Given a geometric tree~$\tree\neq \emptyset$ and a cellular merge tree~$\cellMTree$, the fiber~$\fibT$ and the configuration space~$\conf(\tree,\cellMTree)$ are non-empty.
\end{proposition}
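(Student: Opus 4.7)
The plan is to invoke Theorem~\ref{theorem_homotopy_equivalence_configuration_spaces}, which gives $\fibT \simeq \conf(X,\cellMTree)$; in particular both spaces are empty iff one of them is. It therefore suffices to exhibit a single configuration in $\conf(X,\cellMTree)$, which I would construct by embedding the $n$ leaves of $\cellMTree$ into a single edge of $X$ in a planar-compatible order.

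Concretely, I first fix, at each internal node of $\cellMTree$, an arbitrary ordering of its two children, turning $\cellMTree$ into a plane tree. A depth-first traversal then yields a permutation $\sigma$ of $\{1,\ldots,n\}$ with the property that, for every node $v \in \cellMTree$, the set $D_v := \{i : \Labelleaf_i \preceq v\}$ of descendant-leaf indices has image $\sigma^{-1}(D_v)$ a contiguous block of integers in $\{1,\ldots,n\}$. This contiguity follows by an easy induction on the depth of $v$: at an internal node with children $\leftChild v, \rightChild v$ one has $D_v = D_{\leftChild v} \sqcup D_{\rightChild v}$, and the traversal orders the two sub-blocks so that they sit adjacently, hence concatenate into a single contiguous block. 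I then pick an edge $e \subseteq X$ homeomorphic to $[0,1]$ and place $x_i := \sigma^{-1}(i)/(n+1) \in e$.

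To verify that $x := (x_1,\ldots,x_n) \in \conf(X,\cellMTree)$, I would use that the shortest path in $X$ between any two points of $e$ is entirely contained in $e$, so $\conv_x(v)$ coincides with the closed subinterval of $e$ spanned by the positions $\sigma^{-1}(i)/(n+1)$ for $i\in D_v$. The contiguity property of $\sigma$ makes the index blocks $\sigma^{-1}(D_v)$, and hence the subintervals $\conv_x(v) \subseteq e$, pairwise either disjoint or nested as $v$ ranges over nodes of $\cellMTree$. Therefore $\conv_x(v) \cap \conv_x(v') \neq \emptyset$ forces $D_v \subseteq D_{v'}$ or $D_{v'} \subseteq D_v$, which in the rooted tree $\cellMTree$ is equivalent to $v \preceq v'$ or $v' \preceq v$, as required.

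The only degenerate case is when $X$ has no edges, i.e.\ is a single vertex; then the statement is only substantive if $\cellMTree$ has a single leaf, and we have $\conf(X,\cellMTree) = X \neq \emptyset$ directly. I expect the main bookkeeping step to be cleanly formalising the contiguity property of the depth-first permutation $\sigma$; beyond that the argument is elementary, and it has the pleasant feature of exhibiting an explicit configuration which, via the map $G$ from the proof of Theorem~\ref{theorem_homotopy_equivalence_configuration_spaces}, produces an explicit function $f \in \fibT$.
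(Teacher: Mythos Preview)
Your argument is correct, and it runs in the opposite direction from the paper's. The paper first exhibits a function in $\fibT$ by citing \cite[Proposition~6.8]{curry2018fiber}, which constructs a function on the unit interval with prescribed merge tree (then implicitly extends it to any $X$ containing an edge), and only then invokes Theorem~\ref{theorem_homotopy_equivalence_configuration_spaces} to conclude $\conf(X,\cellMTree)\neq\emptyset$. You instead construct an explicit point of $\conf(X,\cellMTree)$ directly and use the homotopy equivalence in the reverse direction. Your construction is more self-contained and, pleasantly, anticipates material that appears later in the paper: the depth-first ordering you use is exactly the data of a chiral merge tree (Definition~\ref{definition_chiral_merge_tree}), and the contiguity property you isolate is the content of Lemma~\ref{lem:lineconfigrule} read backwards. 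One small clean-up: the sentence ``the contiguity property of $\sigma$ makes the index blocks $\sigma^{-1}(D_v)$ \ldots\ pairwise either disjoint or nested'' slightly inverts the logic---the $D_v$ are already pairwise disjoint or nested because $\cellMTree$ is a tree, and contiguity is what transfers this dichotomy from the integer blocks to the real subintervals of $e$. Also, the map $G$ you cite at the end has domain $\critconf(X,\cellMTree)$, not $\conf(X,\cellMTree)$; to produce an explicit $f\in\fibT$ you should first lift your configuration through the homeomorphism of Step~2 by choosing, say, midpoints in each $\Delta_2$ factor.
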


\begin{proof}
A function on the unit interval (hence more generally on any non-empty tree~$\tree$) with merge tree~$\cellMTree$ can be constructed e.g. as in~\cite[Proposition 6.8]{curry2018fiber}. Therefore~$\fibT\neq \emptyset$, and by Theorem~\ref{theorem_homotopy_equivalence_configuration_spaces},~$\conf(\tree,\cellMTree)\neq \emptyset$ as well.
\end{proof}

\subsection{Counting connected components in the fiber over binary trees}
\label{subsection_counting_connected_components}
Recall that the barcode~$D$ is generic. To avoid trivial cases where~$\fib=\emptyset$, we further assume that~$D$ has no interval in degree greater than~$0$, and only one unbounded interval~$[b,\infty)$ in degree~$0$ that contains all other intervals. 

\begin{proposition}
\label{prop_merge_tree_locally_constant}
Given~$\cellMTree$ a generic cellular merge tree with barcode~$D$,~$\fibT$ is a non-empty union of connected components in~$\fib$.
\end{proposition}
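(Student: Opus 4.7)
The plan is to show that $\fibT$ is both open and closed in $\fib$, whence $\fibT$ is a union of connected components. Non-emptiness is immediate from Proposition~\ref{prop_fiber_merge_tree_non_empty}, so the real content is these two topological properties.

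First, I would establish openness using Proposition~\ref{prop_small_distance_functions_implies_same_merge_trees}. Because $D$ is generic, all its interval endpoints are distinct, so the quantities $\delta_L$ and $\delta_R$ (minimum gaps between distinct left and right endpoints of $D$) are strictly positive. Given $f\in \fibT$, set $\epsilon := \min(\delta_L,\delta_R) > 0$. For any $g\in \fib$ with $\|f-g\|_\infty < \epsilon$, both $f$ and $g$ have barcode $D$, so Proposition~\ref{prop_small_distance_functions_implies_same_merge_trees} gives $\MT(g)\cong \MT(f)=\cellMTree$, hence $g\in \fibT$. Thus the open ball of radius $\epsilon$ around $f$ in $\fib$ is contained in $\fibT$, proving that $\fibT$ is open.

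Next, I would deduce closedness by a complementation argument. The cited result~\cite[Theorem~4.8]{curry2018fiber} enumerates the (finitely many) generic cellular merge trees with barcode $D$; call them $\cellMTree_1,\ldots,\cellMTree_N$ with $\cellMTree = \cellMTree_1$. By Proposition~\ref{prop_generic_barcode_implies_generic_merge_tree}, every $f\in \fib$ has $\MT(f)$ a generic cellular merge tree with barcode $D$, so $\fib$ is partitioned as
\[
\fib = \bigsqcup_{k=1}^{N} \mathrm{MT}^{-1}(\cellMTree_k).
\]
Applying the openness argument to each $\cellMTree_k$ shows every piece $\mathrm{MT}^{-1}(\cellMTree_k)$ is open, so the complement $\fib \setminus \fibT = \bigcup_{k\neq 1} \mathrm{MT}^{-1}(\cellMTree_k)$ is a finite union of open sets, hence open. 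Therefore $\fibT$ is closed in $\fib$, and being both open and closed, it is a union of connected components.

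The main subtle point is simply invoking Proposition~\ref{prop_small_distance_functions_implies_same_merge_trees} correctly: it requires both functions already to have the \emph{same} barcode $D$, which is automatic here since we are restricting to $\fib$. The finiteness of the partition, needed for closedness, is a genuine input from~\cite{curry2018fiber} and is not something we would want to reprove. No other obstacle arises; the argument is short once these two ingredients are in place.
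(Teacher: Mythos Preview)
Your proof is correct and follows essentially the same route as the paper: non-emptiness from Proposition~\ref{prop_fiber_merge_tree_non_empty}, and then Proposition~\ref{prop_small_distance_functions_implies_same_merge_trees} to show that $\MT$ is locally constant on $\fib$, which makes each $\fibT$ clopen. The only redundancy is your appeal to the finiteness of merge trees with barcode $D$ from~\cite{curry2018fiber}: an arbitrary union of open sets is open, so the complement argument for closedness goes through without knowing the partition is finite.
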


\begin{proof}
~$\fibT\neq \emptyset$ (Proposition~\ref{prop_fiber_merge_tree_non_empty}) and~$\MT$ is locally constant on~$\fib$ (Proposition~\ref{prop_small_distance_functions_implies_same_merge_trees}).\qedhere
\end{proof}

\begin{theorem}
\label{theorem_fiber_merge_tree_connected}
Let~$X$ be a tree not homeomorphic to the unit interval. Given a generic cellular merge tree~$\cellMTree$, the fiber~$\fibT$ is nonempty and path-connected. In particular, if~$f\in \fibT$ has barcode~$D$, then~$\fibT$ equals~$\fibf$, the path connected component of the fiber~$\fib$ containing~$f$.
\end{theorem}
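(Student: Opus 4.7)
By Theorem~\ref{theorem_homotopy_equivalence_configuration_spaces}, the fiber $\fibT$ is homotopy equivalent to $\conf(X,\cellMTree)$, so it suffices to show that $\conf(X,\cellMTree)$ is nonempty and path-connected. Nonemptiness is Proposition~\ref{prop_fiber_merge_tree_non_empty}, and the ``in particular'' clause follows from Proposition~\ref{prop_merge_tree_locally_constant}, which already asserts that $\fibT$ is a union of connected components of $\fib$: once we show it is connected, it must coincide with $\fibf$ for any $f \in \fibT$.

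I would prove path-connectedness of $\conf(X,\cellMTree)$ by induction on the number $n$ of leaves of $\cellMTree$. The base case $n = 2$ reduces to path-connectedness of the ordered two-point configuration space $X^2 \setminus \Delta$: given distinct pairs $(x_1,x_2)$ and $(y_1,y_2)$, move $x_1 \to y_1$ along a path in $X \setminus \{x_2\}$, then $x_2 \to y_2$ along a path in $X \setminus \{y_1\}$, rerouting through a branch point $p$ of $X$ whenever a direct path would cause a collision. This is where the branch-point hypothesis enters in the base case.

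For the inductive step with $n \geq 3$, pick a cherry $v \in \cellMTree$ with leaf children $\Labelleaf_i, \Labelleaf_j$, and let $\cellMTree'$ be the generic cellular merge tree with $n-1$ leaves obtained by removing $\Labelleaf_j$ and contracting $v$. By induction, $\conf(X,\cellMTree')$ is path-connected. To connect any two configurations $x^0, x^1 \in \conf(X,\cellMTree)$, I would route them through a common canonical intermediate, in which the cherry pair $(x_i, x_j)$ lies tightly clustered inside a single edge $e$ at a chosen branch point $p$: (a) deform each $x^k$ ($k=0,1$) so that its cherry pair lands inside $e$ in a prescribed order; (b) invoke the inductive hypothesis on $\conf(X,\cellMTree')$, treating the clustered pair as a surrogate single leaf, to move the remaining $n-1$ points to canonical positions, lifting the base path back to $\conf(X,\cellMTree)$ by dragging $(x_i,x_j)$ along inside $e$; (c) adjust $(x_i,x_j)$ within $e$ to reach their final canonical positions. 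Since both endpoints connect to this common intermediate, one obtains a path $x^0 \to x^1$.

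The \textbf{main obstacle} is stage (a) together with the lifting in stage (b): deforming configurations while maintaining $\conv_{x(t)}(u) \cap \conv_{x(t)}(u') = \emptyset$ for every pair of incomparable nodes $u, u'$ of $\cellMTree$. As the cherry pair migrates across $X$, the shortest path $[x_i(t), x_j(t)]$ and every enclosing convex hull must stay disjoint from the sibling convex hulls at each ancestor of $v$. The decisive use of the branch-point hypothesis on $X$ is that $p$ has three or more emanating edges, so at every instant at least one edge at $p$ is disjoint from the finitely many forbidden sibling hulls; that edge serves as a ``parking lane'' through which threatened points detour without violating the constraints. This is exactly the flexibility lacking when $X$ is an interval, and it is what ultimately powers the induction.
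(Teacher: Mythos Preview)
Your reduction is exactly the paper's: nonemptiness from Proposition~\ref{prop_fiber_merge_tree_non_empty}, the homotopy equivalence $\fibT\simeq\conf(X,\cellMTree)$ from Theorem~\ref{theorem_homotopy_equivalence_configuration_spaces}, and the ``in particular'' clause from Proposition~\ref{prop_merge_tree_locally_constant}. What differs is how you propose to show $\conf(X,\cellMTree)$ is path-connected.

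The paper does \emph{not} induct on $n$. It proves directly (Lemma~\ref{lem:gatherpoints}) that any configuration can be deformed so that all $n$ points lie in the interior of a single edge; the key construction moves one point at a time by descending the merge tree from a carefully chosen ancestor, at each internal node entering the child's hull that avoids the sibling hull. Once all points are on one edge, the induced chiral structure can be altered at will inside a degree-$3$ starlike subtree (Lemma~\ref{lem:starreconfigure}), and two edge-configurations with the same chiral structure are joined by the straight-line homotopy of Lemma~\ref{lem:linetravel}. So the paper's architecture is ``gather, then reorder'' rather than ``collapse a cherry and recurse''.

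Your inductive scheme is a natural alternative, but as stated both (a) and (b) conceal work on the order of Lemma~\ref{lem:gatherpoints}. For (b): the forgetful map $\pi\colon\conf(X,\cellMTree)\to\conf(X,\cellMTree')$ is surjective, but its fiber over $x'$ is the connected component of $x_i$ in $X\setminus\bigcup_{u\text{ incomp.\ to }v}\conv_{x'}(u)$, punctured at $x_i$---which is disconnected whenever $x_i$ is not a leaf of that subtree. So path-lifting is not automatic, and ``dragging $(x_i,x_j)$ along inside $e$'' does not parse once the surrogate leaf leaves $e$ along the base path; you would need to keep $x_j$ on a consistent side of $x_i$ through branch points and backtracks, which requires an argument you have not given. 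For (a): moving the cherry pair to the fixed edge $e$ can force you to displace other coordinates first (think of all $n$ points on one long arm of $X$, cherry at the far end, branch point at the near end), so this step is not visibly simpler than the full problem. Even your base case $n=2$ is looser than it looks: $X\setminus\{x_2\}$ is typically disconnected, so ``move $x_1\to y_1$ in $X\setminus\{x_2\}$'' already needs the parking manoeuvre spelled out.

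In short, your ``parking lane'' intuition is exactly right and is what powers the paper's argument too, but turning it into a proof requires a construction at the level of detail of Lemmas~\ref{lem:gatherpoints} and~\ref{lem:starreconfigure}; the cherry-induction does not sidestep that work.
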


The proof mainly relies on the following result.

\begin{proposition}
\label{proposition_conf_space_connected}
Let~$X$ be a tree not homeomorphic to the unit interval. Let~$\cellMTree$ be a generic cellular merge tree. Then~$\conf(X,\cellMTree)$ is path-connected. 
\end{proposition}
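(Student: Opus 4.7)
The plan is to prove the proposition by induction on the number $n$ of leaves of $\cellMTree$. The base case $n = 1$ is immediate, since $\conf(X, \cellMTree) = X$ is path-connected as $X$ is a connected tree. For the inductive step, assume $n \geq 2$ and the claim for all generic cellular merge trees with fewer leaves, and write $L, R$ for the two children of the root of $\cellMTree$ with corresponding sub-merge-trees $\cellMTree_L, \cellMTree_R$ of sizes $n_L, n_R < n$.

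The central idea is to decouple the motions of $x^L$ and $x^R$ by leveraging both the disjointness $\conv_x(L) \cap \conv_x(R) = \emptyset$ (which is built into the definition of $\conf(X, \cellMTree)$) and the branch-point hypothesis on $X$, which supplies the extra room required for maneuvering. Given $x, x' \in \conf(X, \cellMTree)$, I first pick a leaf $\ell$ of $X$ together with a small interval-shaped open neighborhood $U$ of $\ell$ disjoint from $\conv_x(L) \cup \conv_{x'}(L)$; such a leaf exists because these two proper closed subtrees cannot exhaust the finite set of leaves of $X$. I then fix a reference configuration $c^R \subseteq U$ realizing $\cellMTree_R$ (such a $c^R$ exists by Proposition~\ref{prop_fiber_merge_tree_non_empty}) and continuously deform both $x$ and $x'$ so that their $R$-parts become $c^R$, keeping their respective $L$-parts fixed. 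Since $X \setminus \overline{U}$ is just $X$ with a shortened end-edge and hence still has a branch point, the inductive hypothesis applied to $\cellMTree_L$ and $X \setminus \overline{U}$ yields a path from $x^L$ to $x'^L$ in $\conf(X \setminus \overline{U}, \cellMTree_L)$, which lifts to a path in $\conf(X, \cellMTree)$ with the $R$-part frozen at $c^R \subseteq U$, completing the construction.

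The main technical obstacle is the deformation moving $x^R$ to the reference $c^R$, which I would realize by applying the inductive hypothesis to $\cellMTree_R$ on an appropriate subtree $Y \subseteq X$ containing both $x^R$ and $c^R$ but disjoint from $\conv_x(L)$. A natural choice is the connected component of $X \setminus \conv_x(L)$ containing $x^R$ (also containing $\ell$ by the selection of $U$), but for this approach to go through one needs $Y$ itself to be a tree with a branch point. This is generically the case when $\conv_x(L)$ does not contain a branch point of $X$, but it fails in the degenerate situation where $X$ has only one branch point and $x^L$ spans several of its edges. In this degenerate case, I would first perform an auxiliary short path in $\conf(X, \cellMTree)$ that contracts $x^L$ onto a single edge of $X$, which is possible because at most one of $x^L, x^R$ can span multiple edges at the branch point; after this auxiliary move, $\conv_x(L)$ no longer contains the branch point and the main argument proceeds as described.
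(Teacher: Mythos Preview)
Your inductive strategy is a natural alternative to the paper's direct approach (which gathers all points onto a single edge via Lemma~\ref{lem:gatherpoints} and then reconfigures inside a fixed degree-$3$ star via Lemma~\ref{lem:starreconfigure}), but as written it contains a genuine gap.

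The key unsupported claim is that the connected component $Y$ of $X\setminus\conv_x(L)$ containing $x^R$ ``also contains $\ell$ by the selection of $U$''. Your selection of $U$ only guarantees $\ell\notin\conv_x(L)\cup\conv_{x'}(L)$; it does \emph{not} force $\ell$ into the same component of $X\setminus\conv_x(L)$ as $x^R$. Concretely, take $X$ the $3$-star with leaves $a_1,a_2,a_3$ and centre $c$, let $\cellMTree$ have root children $L$ (leaves $l_1,l_2$) and $R$ (leaf $l_3$), and set
\[
x=(x_1,x_2,x_3)\ \text{near}\ (a_1,a_2,a_3),\qquad x'=(x'_1,x'_2,x'_3)\ \text{near}\ (a_1,a_3,a_2).
\]
Then $\conv_x(L)$ and $\conv_{x'}(L)$ each pass through $c$, and the component of $X\setminus\conv_x(L)$ containing $x_3$ is the half-open segment towards $a_3$, while the component of $X\setminus\conv_{x'}(L)$ containing $x'_3$ is the half-open segment towards $a_2$. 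No single leaf $\ell$ lies in both of these components, so no choice of $\ell$ makes your deformation of $x^R$ to $c^R$ and of $x'^R$ to $c^R$ simultaneously take place in subtrees disjoint from the respective $L$-hulls. (The related assertion that two proper closed subtrees ``cannot exhaust the finite set of leaves of $X$'' is also false as stated when configuration points sit at leaves, though this is easily repaired by a preliminary perturbation.)

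Your proposed remedy, the auxiliary move ``contracting $x^L$ onto a single edge'', is exactly the kind of reconfiguration whose legitimacy inside $\conf(X,\cellMTree)$ is at issue: moving some $x_i$ with $l_i\preceq L$ across the branch point must avoid not only $\conv_x(R)$ but every $\conv_x(\Labelnode)$ with $\Labelnode$ a non-ancestor of $l_i$ \emph{inside} $L$, and you give no argument for this. In effect, making the induction go through forces you to prove a version of the paper's Lemma~\ref{lem:gatherpoints} anyway. The paper sidesteps the whole difficulty by proving that lemma once, directly and non-inductively, and then doing all rearrangement on a fixed starlike subtree.
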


Before proving the proposition, let us see how it leads to the theorem.

\begin{proof}[Proof of Theorem~\ref{theorem_fiber_merge_tree_connected}]
From Proposition~\ref{prop_fiber_merge_tree_non_empty},~$\fibT\neq \emptyset$, and from Theorem~\ref{theorem_homotopy_equivalence_configuration_spaces}, it is homotopy equivalent to~$\conf(\tree,\cellMTree)$, which is path-connected by Proposition~\ref{proposition_conf_space_connected}. If~$\cellMTree$ has barcode~$D$, by Proposition~\ref{prop_merge_tree_locally_constant},~$\fibT$ is a non-empty union of connected components of~$\fib$. Therefore it equals exactly one such connected component.
\end{proof}

To prove that~$\conf(X,\cellMTree)$ is path-connected, we proceed in two steps, each relying on a key lemma. First, we show how to deform a configuration of points into one whose points all lie on a common edge of~$X$. The main result to achieve this step is Lemma~\ref{lem:gatherpoints}. Then, given two configurations whose points lie on an edge, we show how to connect them using a branch point of~$X$. We achieve this step with Lemma~\ref{lemma_connected_config_space_starlike_tree}. Throughout, we fix a labelling of the cellular merge tree~$\cellMTree$.

The following two results, Lemma~\ref{lem:moveonepoint} and Lemma~\ref{lem:moveoffbranch}, will be used repeatedly during our argument.

\begin{lemma}
\label{lem:moveonepoint}
Let~$x = (x_1, \ldots,x_n)\in\conf(X,\cellMTree)$, and~$x' = (x_1,\ldots,x_{i-1},y,x_{i+1},\ldots,x_n) \in \conf_n(X)$. Fix~$P$, the image of some path from~$x_i$ to~$y$. If~$\conv_x(\Labelnode)$ does not intersect~$P$ for any~$\Labelnode$ that is not an ancestor of~$\Labelleaf_i$, then~$x'\in\conf(X,\cellMTree)$ and there is a path from~$x$ to~$x'$ in~$\conf(X,\cellMTree)$.
\end{lemma}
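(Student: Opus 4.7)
The plan is to build the candidate path $H:[0,1]\to\conf_n(X)$ by
\[
H(t):=(x_1,\ldots,x_{i-1},\gamma(t),x_{i+1},\ldots,x_n),
\]
where $\gamma:[0,1]\to P$ is a continuous parametrization from $x_i$ to $y$, and then to verify $H(t)\in\conf(X,\cellMTree)$ for every $t$. Evaluating at $t=1$ yields $x'\in\conf(X,\cellMTree)$, and $H$ itself is the desired path. Distinctness of the tuple $H(t)$ is immediate: applying the hypothesis to each leaf $\Labelleaf_j$ with $j\neq i$ gives $x_j=\conv_x(\Labelleaf_j)\notin P$, so $\gamma(t)\neq x_j$.

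The main step is the separation condition for $H(t)$. I would partition the nodes of $\cellMTree$ into \emph{marked} (ancestors of $\Labelleaf_i$) and \emph{unmarked}. For an unmarked $\Labelnode$, the leaf $\Labelleaf_i$ is not a descendant of $\Labelnode$, so $\conv_{H(t)}(\Labelnode)=\conv_x(\Labelnode)$ is independent of $t$. For a marked $\Labelnode$, the key inclusion
\[
\conv_{H(t)}(\Labelnode)\subseteq\conv_x(\Labelnode)\cup P
\]
holds because the right-hand side is a connected subset of $X$ (the two pieces share $x_i$) containing the set $\{x_j:\Labelleaf_j\preceq\Labelnode,\,j\neq i\}\cup\{\gamma(t)\}$ whose minimal spanning subtree in $X$ is $\conv_{H(t)}(\Labelnode)$.

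With the inclusion in hand, let $\Labelnode,\Labelnode'$ be any two nodes with $\conv_{H(t)}(\Labelnode)\cap\conv_{H(t)}(\Labelnode')\neq\emptyset$. If both are marked they lie on the chain of ancestors of $\Labelleaf_i$ in $\cellMTree$ and so are comparable; if both are unmarked, comparability comes directly from $x\in\conf(X,\cellMTree)$. In the mixed case, say $\Labelnode$ marked and $\Labelnode'$ unmarked, the inclusion gives
\[
\conv_{H(t)}(\Labelnode)\cap\conv_{H(t)}(\Labelnode')\subseteq\bigl(\conv_x(\Labelnode)\cap\conv_x(\Labelnode')\bigr)\cup\bigl(P\cap\conv_x(\Labelnode')\bigr),
\]
and incomparability of $\Labelnode,\Labelnode'$ would make the first piece empty via $x\in\conf(X,\cellMTree)$ and the second piece empty by hypothesis, a contradiction. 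Continuity of $H$ is inherited from $\gamma$, finishing the construction. The only nontrivial input is the inclusion for marked nodes, which I expect to be the main obstacle; it hinges on the tree structure of $X$ and on the description of convex hulls as minimal spanning subtrees.
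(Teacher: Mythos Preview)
Your proposal is correct and follows essentially the same argument as the paper: both parametrize the path by moving only the $i$th coordinate along $P$, split nodes into ancestors of $\Labelleaf_i$ versus non-ancestors, use the inclusion $\conv_{H(t)}(\Labelnode)\subseteq\conv_x(\Labelnode)\cup P$ for ancestors, and run the same three-case comparability analysis. Your explicit verification that $H(t)\in\conf_n(X)$ (via $x_j=\conv_x(\Labelleaf_j)\notin P$ for $j\neq i$) is a small point the paper leaves implicit.
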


\begin{proof}
For~$p\in P$, let~$x(p) = (x_1,\ldots, x_{i-1}, p, x_{i+1},\ldots,x_n)$. Let~$\Labelnode,\Labelnode'$ be nodes of~$T$ with neither~$\Labelnode\preceq \Labelnode'$ nor~$\Labelnode'\preceq \Labelnode$. Therefore it cannot be the case that both~$\Labelnode$ and~$\Labelnode'$ are ancestors of~$\Labelleaf_i$ as ancestors of~$\Labelleaf_i$ are totally ordered. If neither~$\Labelnode$ nor~$\Labelnode'$ are ancestors of~$\Labelleaf_i$ then
\begin{equation*}
    \conv_{x(p)}(\Labelnode)\cap \conv_{x(p)}(\Labelnode') = \conv_x(\Labelnode)\cap\conv_x(\Labelnode') = \emptyset,
\end{equation*}
with the final equality following from the fact that~$x\in\conf(X,\cellMTree)$. Lastly, consider the case where either~$\Labelnode$ or~$\Labelnode'$ is an ancestor of~$\Labelleaf_i$, but not both. Without loss of generality assume~$\Labelnode'$ is an ancestor of~$\Labelleaf_i$ and~$\Labelnode$ is not. Note that
\begin{equation*}
    \conv_{x(p)}(\Labelnode')\subseteq \conv_x(\Labelnode')\cup \ShortPath(x_i,p) \subseteq  \conv_x(\Labelnode')\cup P.
\end{equation*}
Thus, since~$\conv_x(\Labelnode)\cap\conv_x(\Labelnode') = \emptyset$, we have
\begin{equation*}
    \conv_{x(p)}(\Labelnode)\cap \conv_{x(p)}(\Labelnode') \subseteq \conv_x(\Labelnode)\cap \big(\conv_x(\Labelnode')\cup P\big) = \conv_x(\Labelnode)\cap P = \emptyset.
\end{equation*}
Thus the criteria for~$x(p)$ to be an element of~$\conf(X,\cellMTree)$ are satisfied for all~$p\in P$. The result follows.
\end{proof}

\begin{lemma}
\label{lem:moveoffbranch}
Let~$b$ be a branch point or leaf in~$X$ with incident edge~$e$, and~$x = (x_1,\ldots, x_n) \in \conf(\tree,\cellMTree)$. If~$x_i = b$ for some~$i$, then sufficiently short paths from~$x_i$ to points~$y$ in the interior of~$e$ define paths from~$x$ to~$(x_1,\ldots, x_{i-1},y,x_{i+1}, \ldots, x_n)$ in~$\conf(\tree,\cellMTree)$.
\end{lemma}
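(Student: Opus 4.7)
The plan is to invoke Lemma~\ref{lem:moveonepoint} with~$P$ taken to be the straight arc along~$e$ from~$b$ to~$y$, so the only thing I need to verify is that this arc avoids~$\conv_x(\Labelnode)$ for every node~$\Labelnode$ of~$\cellMTree$ that is not an ancestor of~$\Labelleaf_i$.

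First I would show that~$b$ itself already lies outside every such convex hull. Since~$\Labelleaf_i$ is a leaf of~$\cellMTree$, its only descendant (in the labelled tree) is itself, so~$\conv_x(\Labelleaf_i)=\{x_i\}=\{b\}$. If~$b$ belonged to~$\conv_x(\Labelnode)$ for some node~$\Labelnode$, then~$\conv_x(\Labelleaf_i)\cap\conv_x(\Labelnode)\neq\emptyset$, which by the defining property of~$\conf(X,\cellMTree)$ forces~$\Labelleaf_i\preceq\Labelnode$ or~$\Labelnode\preceq\Labelleaf_i$; the latter collapses to~$\Labelnode=\Labelleaf_i$ because a leaf has no proper descendants, so in either case~$\Labelnode$ is an ancestor of~$\Labelleaf_i$.

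Since~$\cellMTree$ has finitely many nodes and each~$\conv_x(\Labelnode)$ is a closed subset of the compact tree~$X$, the distance~$d(b,\conv_x(\Labelnode))$ is strictly positive whenever~$\Labelnode$ is not an ancestor of~$\Labelleaf_i$. Setting~$\delta$ to be the minimum of these finitely many positive distances (or~$+\infty$ if no such~$\Labelnode$ exists) gives~$\delta>0$. For any~$y$ in the interior of~$e$ with~$d(b,y)<\delta$, parameterizing~$e$ by arclength from~$b$ shows that every point of the arc~$P$ from~$b$ to~$y$ lies at distance at most~$d(b,y)<\delta$ from~$b$, and so~$P$ is disjoint from each~$\conv_x(\Labelnode)$ with~$\Labelnode$ not an ancestor of~$\Labelleaf_i$. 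Lemma~\ref{lem:moveonepoint} then produces the desired path in~$\conf(X,\cellMTree)$. The main thing to get right is the initial observation that~$b$ itself cannot lie in a non-ancestor convex hull; once that is in place, the rest is a routine compactness argument, so I do not anticipate any substantial obstacle.
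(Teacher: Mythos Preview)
Your proof is correct and follows essentially the same approach as the paper: both verify the hypothesis of Lemma~\ref{lem:moveonepoint} by arguing that $b=x_i$ cannot lie in $\conv_x(\Labelnode)$ for any $\Labelnode$ not ancestral to $\Labelleaf_i$, and then conclude that a short enough arc along $e$ avoids all such hulls. The only cosmetic difference is that the paper quantifies ``sufficiently short'' combinatorially (no other $x_j$ and no branch point on $P$, so any hull meeting $P$ would be forced to contain $b$), whereas you quantify it metrically via the positive minimum distance from $b$ to the finitely many closed hulls; both are equally valid.
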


\begin{proof}
Let~$y$ be sufficiently close to~$x_i=b$ that there is no~$x_j$, here~$j\neq i$, and no additional branch point in the shortest path from~$y$ to~$x_i$. Denote this path by~$P$ and let~$x' = (x_1,\ldots, x_{i-1},y,x_{i+1}, \ldots, x_n)$. For~$\Labelnode\in \cellMTree$ a node not ancestral to~$\Labelleaf_i$, the set~$\conv_x(v)$ does not intersect~$P$, as doing so would mean it would contain~$x_i$. The result follows by Lemma \ref{lem:moveonepoint}.
\end{proof}

\begin{lemma}
\label{lem:gatherpoints}
Fix any~$x = (x_1,\ldots,x_n)\in \conf(\tree,\cellMTree)$. There is an edge~$e$ in~$X$ such that there is a path from~$x$ to~$x' = (x'_1,\ldots,x'_n)$ in~$\conf(\tree,\cellMTree)$ where every~$x'_i$ is in the interior of~$e$.
\end{lemma}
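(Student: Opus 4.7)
The plan is to induct on $n$, the number of leaves of $\cellMTree$. The base case $n = 1$ is immediate: Lemma~\ref{lem:moveonepoint} applied to any path from $x_1$ to a target in the interior of an arbitrarily chosen edge yields the motion, since its hypothesis is vacuous in the absence of non-ancestor nodes.

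For the inductive step ($n \geq 2$), I would pick two sibling leaves $\Labelleaf_i, \Labelleaf_j$ of $\cellMTree$ sharing a common parent $\Labelnode$; such a pair exists since $\cellMTree$ is binary. First, I move $x_j$ along $\ShortPath(x_i, x_j)$ to a point $x_j' \neq x_i$ arbitrarily close to $x_i$ via Lemma~\ref{lem:moveonepoint}. To verify its hypothesis, every non-ancestor $\Labelnode'$ of $\Labelleaf_j$ in $\cellMTree$ is either $\Labelleaf_i$ itself, whose convex hull $\{x_i\}$ is avoided by stopping short, or is incomparable with $\Labelnode$: indeed, $\Labelnode' \succeq \Labelnode$ would make $\Labelnode'$ an ancestor of $\Labelleaf_j$, while $\Labelnode' \preceq \Labelnode$ forces $\Labelnode' \in \{\Labelleaf_i, \Labelleaf_j, \Labelnode\}$, and the last two are excluded. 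Incomparability yields $\conv_x(\Labelnode') \cap \conv_x(\Labelnode) = \emptyset$, and since $\ShortPath(x_i, x_j) \subseteq \conv_x(\Labelnode)$, the path avoids $\conv_x(\Labelnode')$.

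Second, I would reduce to the $(n-1)$-leaf case. Let $\cellMTree^{(-j)}$ denote the cellular merge tree obtained from $\cellMTree$ by deleting $\Labelleaf_j$ and, when $\Labelnode$ is not the root, contracting the resulting degree-two node $\Labelnode$ onto its remaining child $\Labelleaf_i$ (for $n = 2$ the reduced tree is the trivial one-leaf tree, which falls back to the base case). The reduced configuration $x^{(-j)}$, obtained by omitting $x_j$, lies in $\conf(X, \cellMTree^{(-j)})$: descendancy in $\cellMTree^{(-j)}$ is inherited from $\cellMTree$, and $\conv_{x^{(-j)}}(\Labelnode') \subseteq \conv_x(\Labelnode')$ for each node $\Labelnode'$, so the nesting condition carries over. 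The inductive hypothesis then provides a path $\gamma : [0,1] \to \conf(X, \cellMTree^{(-j)})$ from $x^{(-j)}$ to a configuration whose points all lie in the interior of some edge $e$.

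The hard part will be lifting $\gamma$ to a path in $\conf(X, \cellMTree)$ by continuously placing $x_j(t)$ close to $x_i(t)$ throughout. The key observation is that the nodes of $\cellMTree$ incomparable with $\Labelnode$ correspond bijectively to the nodes of $\cellMTree^{(-j)}$ incomparable with $\Labelleaf_i$, with matching convex hulls along $\gamma$. Hence the inductive nesting conditions force $x_i(t)$ to stay outside these (closed) forbidden sets for every $t$; compactness then yields a uniform $\epsilon > 0$ such that the $\epsilon$-ball around $x_i(t)$ is disjoint from them for all $t$. I would define $x_j(t)$ via a continuous selection inside this $\epsilon$-ball, distinct from $x_i(t)$, for instance by taking a nearby point along a fixed incident edge of $x_i(t)$ and absorbing any branch-point crossings of $x_i(t)$ using small detours justified by Lemma~\ref{lem:moveoffbranch}. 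At $t = 1$, $x_i(1)$ lies in the interior of $e$, so for $\epsilon$ small enough $x_j(1)$ also lies there, completing the induction.
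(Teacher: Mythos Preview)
Your approach differs from the paper's, which inducts on the number of coordinates already lying in a chosen edge~$e$: at each step it locates a specific coordinate~$x_i$ outside~$e$ and constructs an explicit path from~$x_i$ to the boundary of~$e$ by descending through the nodes of~$\cellMTree$ (building a chain $\Labelnode_0 \succ \Labelnode_1 \succ \cdots$), then invokes Lemma~\ref{lem:moveonepoint}. Your induction on~$n$ via a reduced merge tree~$\cellMTree^{(-j)}$ is a natural alternative idea, and the first two steps (bringing~$x_j$ next to~$x_i$, and checking that~$x^{(-j)} \in \conf(X,\cellMTree^{(-j)})$) are fine. The lifting step, however, has a genuine gap.

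The difficulty is that a continuous selection $x_j(t) \in B(x_i(t),\epsilon)\setminus\{x_i(t)\}$ need not exist along an arbitrary path $t\mapsto x_i(t)$. Concretely, suppose the inductively supplied path~$\gamma$ carries~$x_i(t)$ to a leaf~$\ell$ of~$X$, and at that moment~$x_j(t)$ sits on the~$\ell$-side of~$x_i(t)$; then~$x_j(t)$ is squeezed between~$x_i(t)$ and~$\ell$, and continuity forces $x_j(t)\to\ell = x_i(t)$, violating distinctness. Your appeal to Lemma~\ref{lem:moveoffbranch} addresses branch points, not leaves, and in any case does not produce a selection rule for~$x_j(t)$: the projection $\{(p,q)\in X^2 : 0<d(p,q)<\epsilon\}\to X$, $(p,q)\mapsto p$, is not a fibration over the leaves of~$X$, so path-lifting genuinely fails there. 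Since the inductive hypothesis gives you no control over whether~$x_i(t)$ visits leaves, you cannot rule this out.

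To rescue the argument you would have to strengthen the inductive statement---for instance, produce paths along which the distinguished coordinate stays bounded away from the leaves of~$X$, or along which a companion point can provably be carried---and this is substantive extra work your proposal does not supply. The paper's approach avoids the issue entirely by never dragging two coordinates in tandem: it moves one coordinate at a time along a path it builds explicitly inside $\conv_x(\Labelnode_0)$, so no companion selection is needed.
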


\begin{proof}
Using Lemma~\ref{lem:moveoffbranch}, we assume without loss of generality that no point of~$x$ is on a branch point or leaf of~$X$. Let~$e\in X$ be an edge with some but not all entries of~$x$ in its interior. We will build a path to an~$x'$ with one more entry in~$e$, giving us the lemma by induction. The construction of the path is illustrated in Figure \ref{fig:exbringtoedgelemma}. 

By hypothesis, there is an entry of~$x$ in at least one of the two path components of~$X$ minus the interior of~$e$. Let~$b$ denote the endpoint of~$e$ in this path component, which we will call~$C$. Let~$x_j$ denote the entry of~$x$ in the interior of~$e$ that is closest to~$b$. We will construct a path from~$b$ to some~$x_i\in C$ and show that this path lifts to a path in~$\conf(\tree,\cellMTree)$, using Lemma \ref{lem:moveonepoint}.

\begin{figure}[htbp]
\centering
\resizebox{.8\textwidth}{!}{
\includegraphics{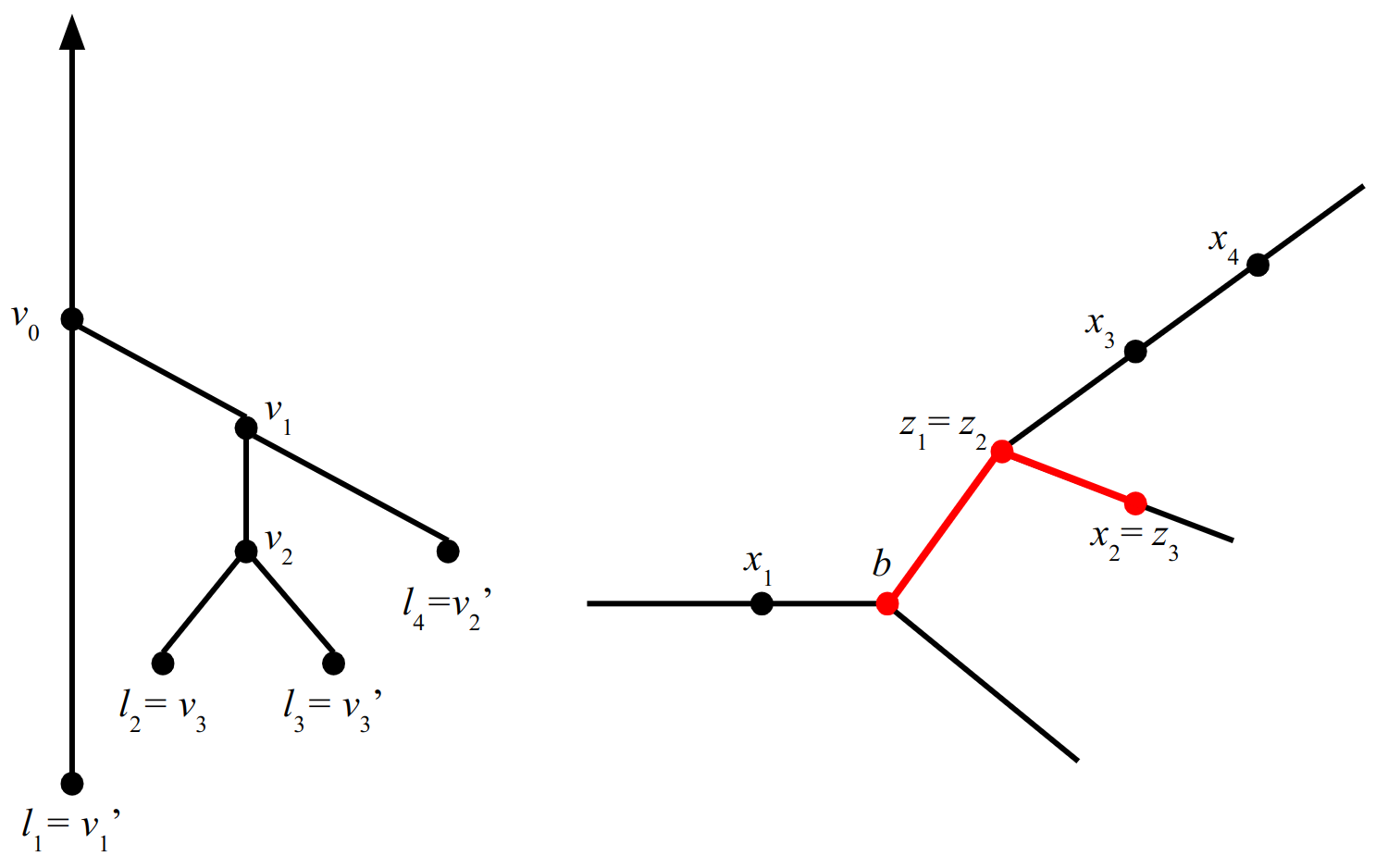}
}
    \caption{A merge tree~$\cellMTree$ (left) and an example geometric tree $X$ (right) illustrating the main construction of Lemma~\ref{lem:gatherpoints}. Highlighted in red is the path constructed from from a point~$x_i$ in~$X$ to~$b$, which lifts to a path in~$\conf(\tree,\cellMTree)$. Here,~$x_1$ plays the role of~$x_j$ in the proof. Note that in this example we could have also constructed a path from~$b$ to~$x_3$, but not to~$x_4$, because~$\conv_x(v_2)$ interrupts the path from~$b$ to~$\conv_x(v_2')$.} 
    \label{fig:exbringtoedgelemma}
\end{figure}

Let~$\Labelnode_0\in \cellMTree$ be the least ancestor of~$\Labelleaf_j$ such that~$\conv_x(\Labelnode_0)$ contains~$b$. Thus~$\Labelnode_0$ is not~$\Labelleaf_j$ itself, so~$\Labelnode_0$ has two children,~$\Labelnode_1$ and~$\Labelnode'_1$, exactly one of which is an ancestor of~$\Labelleaf_j$. Without loss of generality, suppose~$\Labelnode'_1$ is an ancestor of~$\Labelleaf_j$ and~$\Labelnode_1$ is not. Let~$P$ denote the shortest path from~$b$ to~$\conv_x(\Labelnode_1)$, and let~$z_1$ denote the endpoint of~$P$ intersecting~$\conv_x(\Labelnode_1)$. Since both~$b$ and~$z_1$ are in~$\conv_x(\Labelnode_0)$, so is~$P$. If~$b$ is in~$\conv_x(\Labelnode_1)$, then~$P$ consists of the singleton~$b$ and does not intersect~$\conv_x(\Labelnode'_1)$. Otherwise,~$b$ is in neither~$\conv_x(\Labelnode_1)$ nor~$\conv_x(\Labelnode'_1)$. Thus~$b$ lies on the shortest path between these two sets. Hence~$P$ is a subset of this shortest path, and therefore does not intersect~$\conv_x(\Labelnode'_1)$. So $P$ does not intersect $\conv_x(\Labelnode'_1)$ in either case.

We will continue to augment~$P$ until it has reached some~$x_i$. Inductively, assume we have already constructed a path~$P$ from~$b$ to a point~$z_{m-1}$ in~$\conv_x(\Labelnode_{m-1})$. If~$\Labelnode_{m-1}$ has no children, then~$\conv_{x}(\Labelnode_{m-1})$ is a singleton containing~$x_i$ for some~$i$, and we are done. Otherwise, let~$\Labelnode_m$ and~$\Labelnode'_m$ denote the children of~$\Labelnode_{m-1}$.

Either the shortest path from~$z_{m-1}$ to~$\conv_x(\Labelnode_m)$ does not intersect~$\conv_x(\Labelnode'_m)$ or the shortest path from~$z_{m-1}$ to~$\conv_x(\Labelnode'_m)$ does not intersect~$\conv_x(\Labelnode_m)$. Without loss of generality, assume we are in the first case. Since~$z_{m-1}$ is in~$\conv_x(\Labelnode_{m-1})$, so is the shortest path from~$z_{m-1}$ to~$\conv_x(\Labelnode_m)$. We augment~$P$ by this path and refer to its augmented endpoint as~$z_m$. Since~$\Labelnode_0$ has finitely many descendants, this process is guaranteed to terminate eventually, and we will obtain a path~$P$ from~$b$ to some~$x_i$. Further, since every augmented portion of~$P$ lies in~$\conv_x(\Labelnode_m)$ for some~$m$, all of~$P$ lies in~$\conv_x(\Labelnode_0)$.

Let~$\Labelnode$ bet a node of~$\cellMTree$ not ancestral to~$l_j$. If~$\conv_x(\Labelnode)$ intersects~$P$ then it intersects~$\conv_x(\Labelnode_0)$, which contains~$P$. Therefore either~$\Labelnode\preceq \Labelnode_0$ or~$\Labelnode_0\preceq \Labelnode$. It cannot be that~$\Labelnode_0\preceq \Labelnode$ since~$\Labelleaf_i$ is not a descendant of~$\Labelnode$, so~$\Labelnode\preceq \Labelnode_0$. Again since~$\Labelleaf_i$ is not a descendant of~$\Labelnode$,~$\Labelnode$ cannot be~$\Labelnode_m$ for any~$m$, so~$\Labelnode$ is a descendant of some~$\Labelnode'_m$. However, by construction,~$P$ does not intersect~$\conv_x(\Labelnode'_m)$ for any~$m$, so it cannot intersect~$\conv_x(\Labelnode)$ either. Applying Lemma \ref{lem:moveonepoint} allows us to move~$x_i$ onto~$b$. Then applying Lemma~\ref{lem:moveoffbranch}, we can further move~$x_i$ into the interior of~$e$.
\end{proof}

Now that we know we can get all points onto one edge, we want to be able to move groups of points along curves. Let~$x = (x_1,\ldots,x_n)\in\conf_n(X)$, here~$\conf_n(X)$ is the usual ordered configuration space of~$n$ points on~$X$, and suppose that there is a subset~$\subtree\subseteq X$ homeomorphic via a map~$h$ to the unit interval~$[0,1]$, such that every~$x_i$ lies in~$\subtree$. The coordinates of~$x$ thus inherit a total order via the total order of their images under~$h$. Thus for some permutation~$\sigma$ of~$\{1,\ldots,n\}$, the inherited total order has form~$x_{\sigma(1)} \leq \ldots \leq x_{\sigma(n)}$. We refer to~$\sigma$ as the \emph{$h$-permutation of~$x$}. Notice that~$\sigma$ only depends on the orientation determined by~$h$.

\begin{lemma}
Let~$x \in\conf(\tree,\cellMTree)$,~$y\in \conf_n(\tree)$ and suppose that there is a subset~$\subtree\subseteq \tree$ homeomorphic via a map~$h$ to the unit interval~$[0,1]$, such that every coordinate of~$x$ and~$y$ lies in~$\subtree$. Then there is a path from~$x$ to~$y$ in~$\conf(\tree,\cellMTree)$ if~$x$ and~$y$ have the same~$h$-permutation. 
\label{lem:linetravel}
\end{lemma}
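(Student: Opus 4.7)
The plan is to interpolate all coordinates simultaneously along $\subtree$ via straight-line motion in the pullback interval $[0,1]$. Explicitly, set $a_i := h^{-1}(x_i)$ and $b_i := h^{-1}(y_i)$ for each $i$, and define
\[
\gamma(t) := \bigl(h((1-t)a_i + t b_i)\bigr)_{i=1}^n \in X^n, \qquad t \in [0,1],
\]
so that $\gamma(0) = x$ and $\gamma(1) = y$. Since $x$ and $y$ share the same $h$-permutation, for any pair $i \neq j$ the inequalities $a_i < a_j$ and $b_i < b_j$ hold or fail simultaneously, so the convex combination $(1-t)a_i + t b_i$ preserves the strict ordering of every such pair throughout $t \in [0,1]$. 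In particular $\gamma(t)$ lies in $\conf_n(X)$ for all $t$.

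The main geometric input is that $\subtree$, being a subset of the tree $X$ homeomorphic to $[0,1]$, coincides with the unique shortest path between its endpoints in $X$; consequently the shortest path in $X$ between any two points of $\subtree$ is the corresponding sub-arc of $\subtree$. Hence, for each node $\Labelnode$ of $\cellMTree$, the convex hull $\conv_{\gamma(t)}(\Labelnode)$ is contained in $\subtree$ and, identified via $h^{-1}$ with a subset of $[0,1]$, equals the closed interval $[\min A_\Labelnode(t), \max A_\Labelnode(t)]$, where $A_\Labelnode(t) := \{(1-t)a_i + tb_i : \Labelleaf_i \preceq \Labelnode\}$.

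To conclude, let $\Labelnode, \Labelnode'$ be nodes of $\cellMTree$ with neither $\Labelnode \preceq \Labelnode'$ nor $\Labelnode' \preceq \Labelnode$. Their descendant leaf sets are disjoint, so the index sets are disjoint, and since the coordinates of $x$ are pairwise distinct, the two sets $A_\Labelnode(0)$ and $A_{\Labelnode'}(0)$ are disjoint subsets of $\R$. As $x \in \conf(X, \cellMTree)$, the corresponding intervals are disjoint, meaning one set lies strictly below the other; say $\max A_\Labelnode(0) < \min A_{\Labelnode'}(0)$. The pairwise order-preservation established above then forces the analogous strict inequality $\max A_\Labelnode(t) < \min A_{\Labelnode'}(t)$ for every $t \in [0,1]$, so $\conv_{\gamma(t)}(\Labelnode)$ and $\conv_{\gamma(t)}(\Labelnode')$ remain disjoint throughout. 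Hence $\gamma$ is a continuous path from $x$ to $y$ in $\conf(X, \cellMTree)$.

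The only delicate point is the identification of convex hulls with closed sub-intervals of $\subtree$; this relies on the tree-theoretic fact that any embedded arc in a tree realises shortest paths between its endpoints, so shortest paths between its interior points are subsegments. Once this observation is in hand, the remainder is order-preservation bookkeeping on the pulled-back real numbers.
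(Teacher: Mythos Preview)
Your proof is correct and follows essentially the same route as the paper's: linearly interpolate in the $[0,1]$-coordinate and use that the $h$-permutation, hence the interval structure of each $\conv_{\gamma(t)}(\Labelnode)$, is constant along the path; the paper phrases the final step contrapositively, but the content is identical. One notational slip: with $h:\subtree\to[0,1]$ as in the statement you want $a_i := h(x_i)$ and $\gamma(t)_i := h^{-1}\bigl((1-t)a_i + tb_i\bigr)$, not the reverse.
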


\begin{proof}
Let~$h$ and~$h^{-1}$ induce maps on~$\conf_n(Y)$ and~$\conf_n([0,1])$ by acting component-wise. Consider the path in~$\conf_n(X)$ from~$x$ to~$y$
\begin{equation*}
    \gamma(t) := h^{-1}\big[(1-t)h(x) + th(y)\big]
\end{equation*}
whose image under~$h$ linearly interpolates between~$h(x)$ and~$h(y)$. Thus the~$h$-permutation of~$\gamma(t)$ is the same as that of~$x$ for all~$t\in[0,1]$. Suppose~$\conv_{\gamma(t)}(\Labelnode)\cap\conv_{\gamma(t)}(\Labelnode')$ is nonempty. Then either there exists~$i$,~$j$, and~$k$ such that~$\Labelleaf_i$ and~$\Labelleaf_k$ are descendants of~$\Labelnode$,~$\Labelleaf_j$ is a descendent of~$\Labelnode'$ and~$\gamma(t)_i\leq \gamma(t)_j\leq \gamma(t)_k$ or the same is true with the roles of~$\Labelnode$ and~$\Labelnode'$ reversed. Without loss of generality suppose we are in the first case. Thus~$x_i\leq x_j \leq x_k$, since the~$h$-permutation is constant along~$\gamma$. So~$\conv_x(\Labelnode)\cap\conv_x(\Labelnode')$ is also nonempty and~$\Labelnode\preceq \Labelnode'$ or~$\Labelnode' \preceq \Labelnode$. Hence~$\gamma$ is a path in~$\conf(\tree,\cellMTree)$.
\end{proof}

\begin{lemma}
\label{lem:lineconfigrule}
Suppose~$x\in \conf(\tree,\cellMTree)$ consists of points on a subset~$\subtree$ of~$X$ homeomorphic to the unit interval via a map~$h$. Then there exists~$1\leq i<k \leq n$ such that~$\Labelleaf_{\sigma(i)} , \ldots, \Labelleaf_{\sigma(k)}$ are the descendants of~$\Labelnode$.
\end{lemma}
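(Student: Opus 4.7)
The plan is to argue by contradiction, exploiting the fact that convex hulls along a subset homeomorphic to an interval are themselves sub-intervals, together with the defining constraint of $\conf(\tree,\cellMTree)$ applied to the singleton node corresponding to a leaf.

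First I would record the key geometric observation: because $\subtree$ is homeomorphic to $[0,1]$ via $h$ and every coordinate $x_j$ of $x$ lies in $\subtree$, for any node $\Labelnode'\in \cellMTree$ whose leaf-descendants are $\Labelleaf_{j_1},\ldots,\Labelleaf_{j_r}$, the set $\conv_x(\Labelnode')$ is contained in $\subtree$ and in fact coincides with the $h$-preimage of the closed sub-interval $[\min_a h(x_{j_a}),\, \max_a h(x_{j_a})]\subseteq [0,1]$. In particular, $\conv_x(\Labelnode')$ contains every $x_{j'}$ whose $h$-coordinate lies between two $h$-coordinates of leaf-descendants of $\Labelnode'$.

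Next, let $J\subseteq\{1,\ldots,n\}$ denote the indices of the leaf-descendants of $\Labelnode$. I want to show that the set $\sigma^{-1}(J)$ is an interval of consecutive integers in $\{1,\ldots,n\}$, which is equivalent to the statement of the lemma. Suppose for contradiction that $\sigma^{-1}(J)$ is not consecutive: then there exist indices $i<k$ in $\sigma^{-1}(J)$ and some $i<m<k$ with $\sigma(m)\notin J$. Writing $j' := \sigma(m)$, the point $x_{j'}$ has $h(x_{j'})$ strictly between $h(x_{\sigma(i)})$ and $h(x_{\sigma(k)})$, both of which are leaf-descendants of $\Labelnode$. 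By the observation above, $x_{j'}\in\conv_x(\Labelnode)$, so $\conv_x(\Labelleaf_{j'})=\{x_{j'}\}$ intersects $\conv_x(\Labelnode)$.

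The final step will be to derive a contradiction from the configuration rule. Since $x\in\conf(\tree,\cellMTree)$, nonempty intersection of $\conv_x(\Labelleaf_{j'})$ and $\conv_x(\Labelnode)$ forces $\Labelleaf_{j'}\preceq \Labelnode$ or $\Labelnode\preceq \Labelleaf_{j'}$. The first alternative says $\Labelleaf_{j'}$ is a descendant of $\Labelnode$, contradicting $j'\notin J$. The second alternative says $\Labelnode$ is a descendant of the leaf $\Labelleaf_{j'}$, hence $\Labelnode=\Labelleaf_{j'}$, so again $\Labelleaf_{j'}$ is among the descendants of $\Labelnode$, contradicting $j'\notin J$. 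Thus $\sigma^{-1}(J)$ must be a set of consecutive integers, and taking $i,k$ to be its minimum and maximum yields the desired indices. The main (very mild) obstacle is just being careful that $\conv_x(\Labelnode)$ really does contain every point of $\subtree$ lying $h$-between two descendants, which follows immediately from the interval structure of $h(\subtree)$.
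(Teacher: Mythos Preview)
Your proposal is correct and follows essentially the same approach as the paper: both arguments show that the index set $\{j\mid \Labelleaf_{\sigma(j)}\preceq \Labelnode\}$ is an interval of consecutive integers by observing that any $x_{\sigma(m)}$ lying $h$-between two descendants of $\Labelnode$ must itself satisfy $\conv_x(\Labelleaf_{\sigma(m)})\cap\conv_x(\Labelnode)\neq\emptyset$, and then invoking the configuration rule together with the fact that $\Labelleaf_{\sigma(m)}$ is a leaf. The only cosmetic difference is that you phrase it as a contradiction while the paper proves convexity directly.
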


\begin{proof}
It suffices to show that the set~$\{j \mid \Labelleaf_{\sigma(j)}\preceq \Labelnode\}$ is convex. Consider two leaves~$\Labelleaf_{\sigma(i)}$ and~$\Labelleaf_{\sigma(k)}$, with~$i<k$, that have~$\Labelnode$ as ancestor, and let~$j\in \{i,\cdots, k\}$. Since~$x_{\sigma(i)}<x_{\sigma(j)}<x_{\sigma(k)}$, we have~$x_{\sigma(j)}\in \conv(x_{\sigma(i)},x_{\sigma(k)})\subseteq \conv_x(\Labelnode)$. Since~$\conv_x(\Labelleaf_{\sigma(j)}) = \{x_{\sigma(j)}\}$,~$\conv_x(\Labelleaf_{\sigma(j)})$ intersects~$\conv_x(\Labelnode)$. Therefore either~$\Labelleaf_{\sigma(j)} \preceq \Labelnode$ or~$\Labelnode \preceq \Labelleaf_{\sigma(j)}$. But~$\Labelleaf_{\sigma(j)}$ is a leaf so it must be the case that~$\Labelleaf_{\sigma(j)} \preceq \Labelnode$.
\end{proof}

This lemma implies that the homeomorphism~$h$ of the edge~$e$ where a configuration~$x \in \conf(\tree,\cellMTree)$ lies determines, for each internal node~$\Labelnode\in \cellMTree$, its left child~$\leftChild \Labelnode$ and right child~$\rightChild \Labelnode$. Explicitly, we can choose $\leftChild \Labelnode$ and $\rightChild 
\Labelnode$ to be such that $i<j$ whenever $\Labelleaf_{\sigma(i)}\preceq \leftChild\Labelnode$ and $\Labelleaf_{\sigma(j)} \preceq \rightChild\Labelnode$. Therefore, the configuration~$x$ induces the structure~$\cellMTree_x$ of a {\em chiral merge tree} on~$\cellMTree$, as defined in~\cite[Definition 5.3]{curry2018fiber}:
\begin{definition}
\label{definition_chiral_merge_tree}
    A {\em chiral merge tree} is a binary cellular merge tree where the two children of any internal node are labelled as either the {\em left} or {\em right} child.
\end{definition}
The next lemma will let us alter the chiral merge tree structure assigned to a configuration, when~$\tree$ is especially simple.

\begin{lemma}
\label{lem:starreconfigure}
Let~$\tree$ be a geometric starlike tree of degree 3, i.e.~$\tree$ is homeomorphic to three copies of~$[0,1]$ identified at~$0$. Let~$x\in \conf(\tree,\cellMTree)$ be a configuration lying on the interior of an edge~$e$ of~$X$. Fix a homeomorphism~$h$ of~$e$ with~$[0,1]$. Let~$\cellMTree_c$ be any chiral merge tree structure on~$\cellMTree$. Then there exists a path from~$x$ to some~$y\in \conf(\tree,\cellMTree)$ lying on the interior of~$e$, such that~$\cellMTree_y=\cellMTree_c$.
\end{lemma}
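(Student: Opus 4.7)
The plan is to realize the desired change of chiral structure as a composition of elementary \emph{flips}, each at a single internal node of $\cellMTree$. Any two chiral structures on a binary merge tree differ by a finite sequence of such flips---each swapping $\leftChild\Labelnode$ and $\rightChild\Labelnode$ at a single internal node $\Labelnode$---so by concatenating paths in $\conf(X,\cellMTree)$ it suffices to prove: for every internal node $\Labelnode$, the configuration $x$ is path-connected in $\conf(X,\cellMTree)$ to some $y$ on the interior of $e$ whose chiral structure differs from $\cellMTree_x$ only by the flip at $\Labelnode$.

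To perform such a flip, let $\sigma$ be the $h$-permutation of $x$ and set $p_l := h(x_{\sigma(l)})$. By Lemma~\ref{lem:lineconfigrule}, the descendants of $\Labelnode$ occupy a contiguous block $\{i,\ldots,k\}$ of $\sigma$-positions, split at $j$ into the $A$-block ($\leftChild\Labelnode$'s descendants at positions $i,\ldots,j$) and the $B$-block (positions $j+1,\ldots,k$). Let $b$ be the unique branch point of $X$ and let $e_2,e_3$ be the two edges at $b$ other than $e$. The target $y$ will occupy the same $h$-positions $\{p_1,\ldots,p_n\}$ on $e$ as $x$ but with the $A$- and $B$-blocks swapped, preserving each block's internal order. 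I would build the path as a concatenation of six stages of single-point moves, each governed by Lemma~\ref{lem:moveonepoint}:
\begin{enumerate}
    \item[(i)] For $l=1,\ldots,i-1$, in this order, evacuate $x_{\sigma(l)}$ from $e$ onto $e_3$, placing successive points closer to $b$ on $e_3$.
    \item[(ii)] For $l=i,\ldots,j$, in this order, evacuate the $A$-point $x_{\sigma(l)}$ from $e$ onto $e_2$, placing successive points closer to $b$ on $e_2$.
    \item[(iii)] For $l=j+1,\ldots,k$, in this order, evacuate the $B$-point $x_{\sigma(l)}$ from $e$ onto $e_3$, closer to $b$ than all left-tail points placed in stage~(i).
    \item[(iv)] For $l=j,j-1,\ldots,i$, return the $A$-point from $e_2$ to position $p_{l+(k-j)}$ on $e$.
    \item[(v)] For $l=k,k-1,\ldots,j+1$, return the $B$-point from $e_3$ to position $p_{l-(j-i+1)}$ on $e$.
    \item[(vi)] For $l=i-1,\ldots,1$, return the left-tail point from $e_3$ to its original position $p_l$ on $e$.
\end{enumerate}

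The main obstacle is verifying the hypothesis of Lemma~\ref{lem:moveonepoint} at every single move: the moving point's path must avoid $\conv_x(v)$ for every node $v$ non-ancestral to the moving leaf. The key structural observation, to be proven separately, is that any node $v$ whose descendants include leaves from both inside and outside the $\Labelnode$-block (or from both the $A$- and $B$-blocks) must be a common ancestor of $\Labelnode$-descendants and non-$\Labelnode$-descendants, hence a proper ancestor of $\Labelnode$, and therefore ancestral to every descendant of $\Labelnode$ including the moving leaf---contradicting non-ancestry. Consequently, at every intermediate stage, each $\conv_x(v)$ for non-ancestral $v$ lies entirely on one of the three edges of $X$. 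The ``later moved equals closer to $b$'' placement rule then ensures $\conv_x(v)$ always lies outside the initial segment of the relevant edge traced by the path $P$. The essential use of $X$ being starlike of degree exactly $3$ is that its three edges meeting at $b$ provide just enough ``parking space'' to store the $A$-, $B$-, and left-tail groups on distinct edges simultaneously, so that each group can be reinserted into $e$ in the swapped order without one group obstructing another.
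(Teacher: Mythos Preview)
Your proposal is correct and follows essentially the same strategy as the paper: reduce to a single flip at one internal node $\Labelnode$, use the branch point $b$ to temporarily park the $A$-block and $B$-block on different edges, and reinsert them in swapped order, with the crucial structural fact being that any node whose descendant leaves straddle the relevant blocks is forced to be an ancestor of $\Labelnode$ and hence of the moving leaf. The only notable differences are cosmetic: the paper parks the left tail together with the $A$-block on $e_2$ (handling those two moves in bulk via Lemma~\ref{lem:linetravel} on the interval $e_1\cup e_2$), whereas you park the left tail and the $B$-block on $e_3$ with a near/far distinction and verify every move individually via Lemma~\ref{lem:moveonepoint}; the paper's use of Lemma~\ref{lem:linetravel} shortens the verification slightly, but the underlying argument is the same.
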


\begin{proof}
By induction, it is sufficient to prove the case where~$\cellMTree_c$ differs from~$\cellMTree_x$ by only inverting the left child~$\leftChild \Labelnode$ and right child~$\rightChild \Labelnode$ of a given node~$\Labelnode$. 

For simplicity, assume that the permutation~$\sigma$ induced by~$x$ and~$h$ is the identity, thus~$x_1 \leq \ldots \leq x_n$. Denote by~$\Labelleaf_{i},\ldots,\Labelleaf_{k}$ the descendants of~$\Labelnode$. There is some~$i\leq j < k$ such that~$\Labelleaf_{i},\ldots,\Labelleaf_{j}$ are the descendants of~$\leftChild \Labelnode$ while~$\Labelleaf_{j+1},\ldots,\Labelleaf_{k}$ are the descendants of~$\rightChild \Labelnode$. The goal is thus to build a path from~$x$ to some~$y = (y_1,\ldots,y_n)$ in~$\conf(\tree,\cellMTree)$, where each entry of~$y$ is interior to~$e$, satisfying
\begin{equation*}
    y_{1} \leq \ldots \leq y_{i-1} \leq y_{j+1} \leq \ldots \leq y_{k} \leq y_{i}\leq \ldots \leq y_{j} \leq y_{k+1} \leq \ldots \leq y_{n}.
\end{equation*}
Let~$e_1=e$,~$e_2$, and~$e_3$ be the edges of~$X$, and~$b$ be the branch point of~$X$. Without loss of generality assume that~$h$ is such that~$h(b) = 0$. We will show that the following sequence of moves from~$x$ to~$y$ are allowable in~$\conf(\tree,\cellMTree)$:
\begin{enumerate}
    \item Move the~$1^{\text{st}}$ through~$j^{\text{th}}$ coordinates from~$e_1$ into~$e_2$, in that order. 
    \item Move the~$(j+1)^{\text{th}}$ through~$k^{\text{th}}$ coordinates from~$e_1$ into~$e_3$, in that order.
    \item Move the~$i^{\text{th}}$ through~$j^{\text{th}}$ coordinates from~$e_2$ into~$e_1$, in reverse order.
    \item Move the~$(j+1)^{\text{th}}$ through~$k^{\text{th}}$ coordinates from~$e_3$ into~$e_1$, in reverse order.
    \item Move the~$1^{\text{st}}$ through~$i^{\text{th}}$ coordinates from~$e_2$ into~$e_1$, in reverse order.
\end{enumerate}
Figure \ref{fig:reconfigure} shows a visualization of this sequence of moves in an example where there are five coordinates.

\begin{figure}[htbp]
\centering
\resizebox{\textwidth}{!}{
\includegraphics{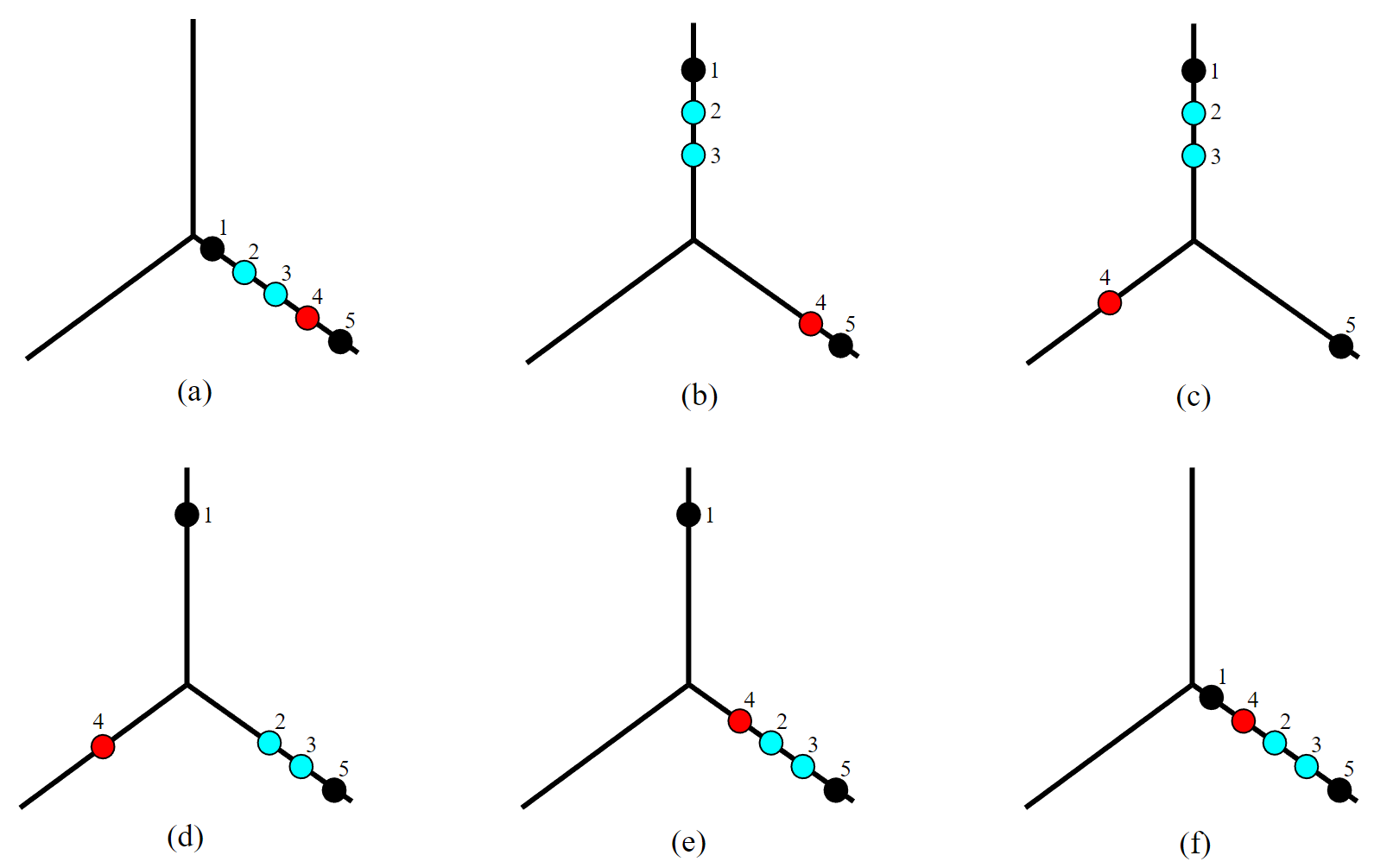}
}
    \caption{A visual representation of the proof of Lemma \ref{lem:starreconfigure}. In the depicted example,~$\cellMTree$ has five leaves, three of which are descendant from~$\Labelnode$. One of these nodes, highlighted in red, is moreover a descendant of~$\leftChild\Labelnode$. The other two descendants of~$\Labelnode$, highlighted in blue, are descendants of~$\rightChild \Labelnode$. Panels (a) through (f) show the path used to reconfigure points in the proof.}
    \label{fig:reconfigure}
\end{figure}
Moves 1 and 5 can be realized by a path in~$\conf(\tree,\cellMTree)$ via Lemma \ref{lem:linetravel} with~$\subtree = e_1 \cup e_2$. For move~2, suppose we have already moved coordinates~$j+1$ through~$m-1$ into~$e_3$ to attain a configuration~$z = (z_1,\ldots, z_n)$. Let~$P$ denote the shortest path from~$z_m$ to~$b$ and~$z' = (z_1,\ldots,z_{m-1}, b,z_{m+1}, \ldots, z_n)$. Let~$\Labelnode_0$ be a node of~$\cellMTree$ not ancestral to~$\Labelleaf_m$.

Suppose that~$\conv_z(\Labelnode_0)$ intersects~$P$. Since~$\Labelleaf_m \npreceq \Labelnode_0$,~$\conv_z(\Labelnode_0)$ can only intersect~$P$ at~$b$. This means that~$\conv_{z'}(\Labelnode_0)$ intersects the interiors of both~$e_2$ and~$e_3$, so there must be points on~$e_3$ in~$z$ already, i.e.~$m>j+1$. In particular,~$\conv_z(\Labelnode_0)$ contains both~$z_j$ and~$z_{m-1}$. Since~$\Labelleaf_j$ and~$\Labelleaf_{m-1}$ are descendants of~$\leftChild\Labelnode$ and~$\rightChild\Labelnode$ respectively,~$\Labelnode_0$ must be an ancestor of~$\Labelnode$. Since~$\Labelleaf_m$ is a descendant of~$\Labelnode$,~$\Labelnode_0$ is an ancestor of~$\Labelleaf_m$, a contradiction. Hence~$\conv_z(\Labelnode)$ does not intersect intersect~$P$, and so by Lemma \ref{lem:moveonepoint} we can move the~$m^{\mathrm{th}}$ coordinate of~$z$ to~$b$. Then applying Lemma \ref{lem:moveoffbranch} we can move the~$m^{\mathrm{th}}$ coordinate of~$z$ into the interior of~$e_3$. Induction on~$m$ then allows us to complete move 2.

The cases of moves 3 and 4 are handled similarly.
\end{proof}

\begin{lemma}
\label{lemma_connected_config_space_starlike_tree}
Let~$X$ be a geometric starlike tree of degree 3. Then~$\conf(\tree,\cellMTree)$ is path-connected.
\end{lemma}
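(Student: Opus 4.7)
The plan is to show that every configuration in $\conf(X,\cellMTree)$ is path-connected within $\conf(X,\cellMTree)$ to a canonical one that depends only on $\cellMTree$, by combining the preparatory Lemmas~\ref{lem:gatherpoints}, \ref{lem:linetravel}, and \ref{lem:starreconfigure}. Fix once and for all a reference edge $e$ of $X$, a homeomorphism $h: e \to [0,1]$, and an arbitrary chiral merge tree structure $\cellMTree_c$ on $\cellMTree$.

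Given $x \in \conf(X,\cellMTree)$, first apply Lemma~\ref{lem:gatherpoints} to deform $x$ along a path in $\conf(X,\cellMTree)$ into a configuration $x'$ supported on the interior of some edge $e_i$ of $X$. Since $X$ has degree~$3$ at its unique branch point, the union $e_i \cup e$ of two edges is homeomorphic to a closed interval; invoking Lemma~\ref{lem:linetravel} on this interval then lets me translate $x'$ across the branch point to a configuration $x''$ whose coordinates all lie in the interior of $e$, with a prescribed ordering matching the $h$-permutation of $x'$.

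By Lemma~\ref{lem:lineconfigrule} the configuration $x''$ induces a chiral merge tree structure on $\cellMTree$; applying Lemma~\ref{lem:starreconfigure} produces a path in $\conf(X,\cellMTree)$ from $x''$ to some $\hat{x}$ still supported in the interior of $e$ and whose chiral structure equals $\cellMTree_c$. Once the chiral structure is $\cellMTree_c$ and $e$ is oriented by $h$, the $h$-permutation of any such configuration is uniquely determined as the depth-first leaf listing of $\cellMTree_c$, because at each internal node the descendants of the left child must precede those of the right child in the permutation (combining the contiguity of Lemma~\ref{lem:lineconfigrule} with the left/right labeling convention). Consequently any two reduced configurations share the same $h$-permutation and are joined by Lemma~\ref{lem:linetravel}, giving path-connectedness of $\conf(X,\cellMTree)$, which is nonempty by Proposition~\ref{prop_fiber_merge_tree_non_empty}.

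The essential difficulty has already been absorbed into Lemma~\ref{lem:starreconfigure}, which is precisely where the degree-$3$ hypothesis enters: with only one edge available there would be no room to swap the children of an single internal node. The remaining steps above are a fairly routine choreography of the preparatory lemmas, the only mild bookkeeping being that the use of Lemma~\ref{lem:linetravel} on $e_i \cup e$ is consistent with the subsequent use on $e$ alone, which follows by choosing the homeomorphisms compatibly across the branch point.
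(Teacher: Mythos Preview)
Your proposal is correct and follows essentially the same route as the paper: use Lemma~\ref{lem:gatherpoints} and Lemma~\ref{lem:linetravel} to push any configuration onto the interior of a fixed edge~$e$, then invoke Lemma~\ref{lem:starreconfigure} to normalise the induced chiral structure, and finally apply Lemma~\ref{lem:linetravel} once more since equal chiral structures force equal $h$-permutations. The only cosmetic difference is that the paper matches the chiral structure of one given configuration to that of another, whereas you match every configuration to a fixed reference $\cellMTree_c$; these are equivalent formulations of the same argument.
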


\begin{proof}
Fix an edge~$e\in \tree$ with an orientation~$h$. By Proposition~\ref{prop_fiber_merge_tree_non_empty},~$\conf(\tree,\cellMTree)\neq \emptyset$. Let~$x,y\in \conf(\tree,\cellMTree)$. Up to applying Lemma~\ref{lem:gatherpoints} and Lemma~\ref{lem:linetravel}, we can assume that~$x$ lies in the interior of~$e$. Similarly we may assume for~$y$ lies in the interior of~$e$. 

By Lemma~\ref{lem:starreconfigure}, we can connect~$x$ to a configuration~$x'\in \conf(\tree,\cellMTree)$ lying on~$e$ and such that~$\cellMTree_{x'}=\cellMTree_{y}$. In particular~$x'$ and~$y$ induce the same~$h$-permutation, and therefore, thanks to Lemma~\ref{lem:linetravel}, there is a path between them in~$\conf(\tree,\cellMTree)$. 
\end{proof}

Finally we can prove the central proposition of the section, restated below for convenience.

\begin{repproposition}{proposition_conf_space_connected}
Let~$X$ be a tree not homeomorphic to the unit interval. Let~$\cellMTree$ be a generic cellular merge tree. Then~$\conf(X,\cellMTree)$ is path-connected. 
\end{repproposition}

\begin{proof}[Proof of Proposition~\ref{proposition_conf_space_connected}]
Let~$l$ be a leaf of~$X$, and~$e$ be the edge incident to~$l$. Since~$\tree$ is connected and not homeomorphic to the unit interval, the other endpoint~$b$ of~$e$ must be a branch point. Hence, there is a subtree~$\subtree \subseteq \tree$ which is a geometric starlike tree of degree 3 containing~$e$. By Proposition~\ref{prop_fiber_merge_tree_non_empty},~$\conf(\subtree,\cellMTree)\neq \emptyset$. Let~$y\in\conf(\subtree,\cellMTree)\subseteq \conf(\tree,\cellMTree)$ be a fixed, target configuration on~$\tree$.

Let~$x = (x_1,\ldots,x_n)\in \conf(\tree,\cellMTree)$. Applying Lemma \ref{lem:gatherpoints} and then Lemma~\ref{lem:linetravel}, we find a path in~$ \conf(\tree,\cellMTree)$ from~$x$ to a configuration~$x'$ whose points lie in the interior of~$e$. Viewing~$x'$ as a configuration in~$\conf(\subtree,\cellMTree)\subseteq \conf(\tree,\cellMTree)$, by~Lemma~\ref{lemma_connected_config_space_starlike_tree} it can be joined to~$y$ via a path in~$\conf(\subtree,\cellMTree)$, which also defines a path in~$ \conf(\tree,\cellMTree)$. 
\end{proof}

\begin{corollary}
\label{corollary_distance_connected_comp}
Let~$X$ be a tree not homeomorphic to the unit interval. Let~$\delta_L$ be the minimum distance between pairs of non-equal interval left endpoints in~$D$ and~$\delta_R$ be the minimum distance between pairs of non-equal right endpoints. Then the path connected components of the fiber~$\fib$ are at distance at least~$\min(\delta_L,\delta_R)$ from each other. In particular, the path connected components of~$\fib$ are the connected components of~$\fib$.
\end{corollary}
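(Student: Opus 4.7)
The plan is to combine Theorem~\ref{theorem_fiber_merge_tree_connected} with Proposition~\ref{prop_small_distance_functions_implies_same_merge_trees} in a straightforward way. Since $D$ is generic, Proposition~\ref{prop_generic_barcode_implies_generic_merge_tree} guarantees that every $f \in \fib$ has a generic cellular merge tree $\MT(f)$. Theorem~\ref{theorem_fiber_merge_tree_connected} then identifies the path component $\fibf$ with the entire set $\fibT$ for $\cellMTree = \MT(f)$. Consequently two functions in $\fib$ lie in the same path component if and only if their merge trees are isomorphic as cellular merge trees.

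Given this, I would take $f, g \in \fib$ lying in distinct path components of $\fib$, so that $\MT(f) \not\cong \MT(g)$. The contrapositive of Proposition~\ref{prop_small_distance_functions_implies_same_merge_trees} (whose hypotheses are met: $X$ is compact and connected, and $f, g$ share the finite barcode $D$) immediately yields $\|f - g\|_\infty \geq \min(\delta_L, \delta_R)$. This is the distance bound.

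For the ``in particular'' clause, the plan is to observe that the bound just proved makes each path component $\fibf$ clopen in $\fib$. Explicitly, if $g \in \fib$ satisfies $\|f - g\|_\infty < \min(\delta_L, \delta_R)$, then Proposition~\ref{prop_small_distance_functions_implies_same_merge_trees} gives $\MT(g) \cong \MT(f)$, so $g \in \fibf$; hence $\fibf$ is open, and its complement is a union of other path components and therefore also open. Since a clopen path connected subset of $\fib$ is a connected component, path components and connected components of $\fib$ coincide.

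The argument has no real obstacle: everything reduces to chaining the structural result of Theorem~\ref{theorem_fiber_merge_tree_connected} (path components correspond to isomorphism classes of merge trees) with the quantitative rigidity of Proposition~\ref{prop_small_distance_functions_implies_same_merge_trees} (sufficiently close functions share a merge tree). The only point worth being careful about is verifying that the genericity assumption on $D$ propagates to all merge trees $\MT(f)$ for $f \in \fib$, which is exactly what Proposition~\ref{prop_generic_barcode_implies_generic_merge_tree} provides.
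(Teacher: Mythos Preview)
Your proposal is correct and follows exactly the paper's approach: distinct path components force non-isomorphic merge trees by Theorem~\ref{theorem_fiber_merge_tree_connected}, and then Proposition~\ref{prop_small_distance_functions_implies_same_merge_trees} in contrapositive form gives the distance bound. Your explicit verification via Proposition~\ref{prop_generic_barcode_implies_generic_merge_tree} and your clopen argument for the ``in particular'' clause are more detailed than the paper's terse proof, but the substance is identical.
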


\begin{proof}
Let~$f,g:X\to \R$ be functions in distinct path connected components of the fiber~$\fib$. Then by Theorem~\ref{theorem_fiber_merge_tree_connected}, their merge trees~$\MT(f)$ and~$\MT(g)$ are non-isomorphic, and therefore by Proposition~\ref{prop_small_distance_functions_implies_same_merge_trees}, we have that~$\|f-g\|_\infty\geq\min(\delta_L,\delta_R)$.
\end{proof}

\begin{corollary}
\label{corollary_counting_functions_in_the_fiber}
Let~$X$ be a tree not homeomorphic to the unit interval. The fiber~$\fib$ has a finite number of connected components given by:
\[\# \pi_0(\fib)=\prod_{[b,d)\in D} \# \big \{ [b',d') \in D \mid  [b,d) \subset [b',d')\big\}.\]
\end{corollary}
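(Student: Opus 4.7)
The plan is to reduce the counting of connected components of $\fib$ to the counting of non-isomorphic generic cellular merge trees with barcode $D$, and then to invoke the existing enumeration result from \cite[Theorem 4.8]{curry2018fiber}.

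First I would set up the bijection between $\pi_0(\fib)$ and the isomorphism classes of generic cellular merge trees whose barcode is $D$. By Proposition~\ref{prop_generic_barcode_implies_generic_merge_tree}, every $f\in\fib$ satisfies that $\MT(f)$ is a generic cellular merge tree with barcode $D$. Hence we obtain a partition
\[
\fib=\bigsqcup_{[\cellMTree]}\fibT,
\]
indexed over isomorphism classes $[\cellMTree]$ of generic cellular merge trees with barcode $D$, and each term in the union is non-empty by Proposition~\ref{prop_fiber_merge_tree_non_empty}. Since $X$ is not homeomorphic to the unit interval, Theorem~\ref{theorem_fiber_merge_tree_connected} ensures that each $\fibT$ is path-connected. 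Moreover, Proposition~\ref{prop_merge_tree_locally_constant} asserts that each $\fibT$ is a union of connected components of $\fib$, so in particular distinct isomorphism classes $[\cellMTree]\neq[\cellMTree']$ yield disjoint path-connected components $\fibT$ and $\mathrm{MT}^{-1}(\cellMTree')$ of $\fib$. This establishes a bijection between $\pi_0(\fib)$ and the set of isomorphism classes of generic cellular merge trees with barcode $D$.

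Next I would invoke the enumeration of merge trees compatible with a generic barcode. By~\cite[Theorem~4.8]{curry2018fiber}, the number of isomorphism classes of generic cellular merge trees with barcode $D$ equals
\[
\prod_{[b,d)\in D}\#\big\{[b',d')\in D\mid [b,d)\subset [b',d')\big\}.
\]
Combining this with the bijection established above yields the claimed formula. Finiteness of $\pi_0(\fib)$ follows immediately since the product is finite (as $D$ is finite by genericity).

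I do not expect a real obstacle here: the heavy lifting was already performed in Theorem~\ref{theorem_fiber_merge_tree_connected} (connectedness of $\fibT$) and in Proposition~\ref{prop_merge_tree_locally_constant} (which together with Proposition~\ref{prop_small_distance_functions_implies_same_merge_trees} guarantees that distinct merge trees sit in distinct connected components). The only subtlety is making sure the partition is both by connected components and by isomorphism classes of merge trees simultaneously; that is precisely what those two results give us. The remainder is a direct application of the cited combinatorial count.
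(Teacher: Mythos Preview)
Your proposal is correct and follows essentially the same approach as the paper: establish a bijection between $\pi_0(\fib)$ and isomorphism classes of generic cellular merge trees with barcode $D$ via Theorem~\ref{theorem_fiber_merge_tree_connected} and Proposition~\ref{prop_merge_tree_locally_constant}, then invoke \cite[Theorem~4.8]{curry2018fiber} for the count. Your write-up is in fact more explicit than the paper's terse version, spelling out the partition and the non-emptiness step via Proposition~\ref{prop_fiber_merge_tree_non_empty}; the paper instead appeals to Corollary~\ref{corollary_distance_connected_comp} to identify path components with connected components, but the logical content is the same.
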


\begin{proof}
This is the number of distinct cellular merge trees with barcode~$D$, see~\cite[Theorem 4.8]{curry2018fiber}. From Theorem~\ref{theorem_fiber_merge_tree_connected}, such merge trees are in bijection with path connected components of~$\fib$, which equal connected components of~$\fib$ by Corollary~\ref{corollary_counting_functions_in_the_fiber}.
\end{proof}

\subsection{Topology of connected components in the fiber}
\label{subsection_topology_connected_components}

When there are very few leaves in the cellular merge tree~$\cellMTree$, and hence very few points in~$\conf(X,\cellMTree)$, we are able to deduce the homotopy type of the connected components of~$\MT^{-1}(\cellMTree)$, and hence~$\fib$ for simple barcodes~$D$. The simplest case is when~$\cellMTree$ has only one leaf.

\begin{corollary}
\label{corollary_fiber_barcode_1_interval}
Let~$X$ be the geometric realization of a tree,~$\cellMTree$ be a merge tree with one leaf and~$D$ be the barcode associated to~$\cellMTree$. Then~$\MT^{-1}(\cellMTree) = \fib$ and both are contractible.
\end{corollary}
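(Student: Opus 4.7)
The plan is to reduce the problem to the observation that the configuration space attached to a one-leaf merge tree is $X$ itself, and hence contractible.

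First I would verify the set-theoretic identity $\fibT = \fib$. The inclusion $\fibT \subseteq \fib$ is immediate from the factorization of $\PH$ through $\MT$. For the reverse inclusion, let $f \in \fib$. The barcode $D$ is trivially generic (a single interval has no coinciding endpoints), so Proposition~\ref{prop_generic_barcode_implies_generic_merge_tree} ensures $\MT(f)$ is a generic cellular merge tree with barcode $D$. Since leaves of a cellular merge tree correspond to left endpoints of intervals in its barcode and internal nodes correspond to finite right endpoints, $\MT(f)$ must have exactly one leaf (of value $b$) and no internal nodes. Therefore $\MT(f) \cong \cellMTree$, and $f \in \fibT$.

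Next I would apply Theorem~\ref{theorem_homotopy_equivalence_configuration_spaces} to obtain the homotopy equivalence $\fibT \simeq \conf(X, \cellMTree)$. Here $\cellMTree$ has a single node, its lone leaf $\Labelleaf_1$, so the only pair of nodes to test in the defining condition of $\conf(X, \cellMTree)$ is $(\Labelleaf_1, \Labelleaf_1)$, which is tautologically comparable. Thus $\conf(X, \cellMTree) = \conf_1(X) = X$. A geometric tree is contractible, so $\fib = \fibT$ is contractible as well.

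I do not foresee any serious obstacle. The only points requiring care are (i) checking that the one-leaf $\cellMTree$ satisfies the genericity hypothesis of Theorem~\ref{theorem_homotopy_equivalence_configuration_spaces}, which it does vacuously as it has no internal nodes, and (ii) recognizing that the intersection constraint defining $\conf(X, \cellMTree)$ collapses to nothing when $\cellMTree$ has a single node.
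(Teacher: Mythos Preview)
Your proposal is correct and follows essentially the same route as the paper: identify $\fib$ with $\fibT$, then apply Theorem~\ref{theorem_homotopy_equivalence_configuration_spaces} to reduce to $\conf(X,\cellMTree)=\conf_1(X)=X$. The only minor difference is that the paper invokes \cite[Theorem~4.8]{curry2018fiber} to conclude that $\cellMTree$ is the unique merge tree with barcode $D$, whereas you argue this directly via Proposition~\ref{prop_generic_barcode_implies_generic_merge_tree} and a leaf/node count; both are valid.
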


\begin{proof}
The barcode~$D$ consists of one infinte interval~$[a,\infty )$, where~$a$ is the value assigned to the one leaf of~$\cellMTree$. The only merge tree that can give rise to~$D$ is~$\cellMTree$, by \cite[Theorem 4.8]{curry2018fiber}. Hence~$\MT^{-1}(\cellMTree) = \fib$. By Theorem \ref{theorem_homotopy_equivalence_configuration_spaces},
\begin{equation*}
    \MT^{-1}(\cellMTree) \simeq \conf(X,\cellMTree) = \conf_1(X) = X,
\end{equation*}
and~$X$ is contractible, so~$\MT^{-1}(\cellMTree)$ is contractible.
\end{proof}

If there are only two points in~$\conf(X,\cellMTree)$, its structure is still fairly simple, and has already been computed up to homotopy. We derive the following as an immediate consequence.

\begin{corollary}
\label{corollary_fiber_barcode_2_intervals}
Let~$X$ be the geometric realization of a tree with at least one vertex of degree~$\geq 3$ and~$\cellMTree$ be a cellular merge tree with exactly two leaves~$l_1$ and~$l_2$. Suppose~$\pi(l_1) \neq \pi(l_2)$. Then~$\MT^{-1}(\cellMTree)$ is homotopy equivalent to the wedge sum of
\begin{equation*}
    -1 + \sum_{v\in N(X)} (\eta(v) - 1)(\eta(v) - 2)
\end{equation*}
circles, where~$N(X)$ denotes the nodes in any cellular decomposition of~$X$ and~$\eta(v)$ denotes the degree of node~$v$ in~$X$. Moreover, denoting by~$D$ the barcode arising from the merge tree~$\cellMTree$, we have~$\fib = \MT^{-1}(\cellMTree)$.
\end{corollary}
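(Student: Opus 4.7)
The plan is to combine Theorem~\ref{theorem_homotopy_equivalence_configuration_spaces} with the classical computation of the ordered configuration space of two points on a finite graph. First I would unpack the structure of $\cellMTree$: with only two leaves of distinct heights, it has just three nodes, namely the leaves $l_1, l_2$ and a unique internal (and root) node $v$, with $l_1, l_2 \preceq v$ as the only comparabilities. Consequently the sole incomparable pair of nodes is $(l_1, l_2)$, and since $\conv_x(l_i) = \{x_i\}$, the defining constraint of $\conf(X, \cellMTree)$ collapses to $x_1 \neq x_2$. This identifies $\conf(X, \cellMTree)$ with the usual ordered configuration space $\conf_2(X)$, so Theorem~\ref{theorem_homotopy_equivalence_configuration_spaces} yields $\MT^{-1}(\cellMTree) \simeq \conf_2(X)$.

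Next I would invoke the standard description of $\conf_2(X)$ for $X$ a finite graph, passing through Abrams' discretised configuration space $D_2(X)$ (or, equivalently, through Farley--Sabalka discrete Morse theory). After sufficient subdivision of $X$, $D_2(X)$ is an at-most-$2$-dimensional CAT($0$) cube complex that is a deformation retract of $\conf_2(X)$, and for a tree $X$ with at least one branch point it further collapses onto a connected $1$-dimensional subcomplex. Consequently $\conf_2(X)$ is homotopy equivalent to a wedge of $1 - \chi(D_2(X))$ circles. A direct cell count of ordered pairs of disjoint vertex--vertex, vertex--edge and edge--edge cells yields
\begin{equation*}
\chi(D_2(X)) = V(V-1) - 2E(V-2) + E(E-1) - \sum_{v \in N(X)} \eta(v)(\eta(v)-1),
\end{equation*}
which, after substituting the tree relations $V = E + 1$ and $\sum_v \eta(v) = 2E$, simplifies to $2 - \sum_v (\eta(v) - 1)(\eta(v) - 2)$. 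Hence the number of circles is $-1 + \sum_v (\eta(v) - 1)(\eta(v) - 2)$ as claimed; the expression is independent of the chosen cellular decomposition, since each degree-$2$ vertex contributes $0$ to the sum.

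The equality $\fib = \MT^{-1}(\cellMTree)$ then follows from the uniqueness of the merge tree with barcode $D$. Since $\pi(l_1) \neq \pi(l_2)$ and the only unbounded interval of $D$ contains all others, $D$ has the form $\{[b_1, \infty), [b_2, d_2)\}$ with $b_1 < b_2 < d_2$. Any cellular merge tree with this barcode must consist of two leaves at heights $b_1, b_2$ merging at a single internal node at height $d_2$, which is combinatorially unique; equivalently, Corollary~\ref{corollary_counting_functions_in_the_fiber} gives $\#\pi_0(\fib) = 1$. The main obstacle is not any step above but the background fact that $\conf_2$ of a tree has the homotopy type of a graph and so a wedge of circles; this relies on the Abrams/Farley--Sabalka collapse of $D_2(X)$ rather than on material developed earlier in the paper. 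Every other step --- the reduction of $\conf(X,\cellMTree)$ to $\conf_2(X)$, the Euler-characteristic simplification, and the uniqueness of $\cellMTree$ --- is a short combinatorial check.
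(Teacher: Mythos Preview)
Your proof is correct and follows the same overall strategy as the paper: reduce $\MT^{-1}(\cellMTree)$ to $\conf(X,\cellMTree)$ via Theorem~\ref{theorem_homotopy_equivalence_configuration_spaces}, identify the latter with $\conf_2(X)$, compute its homotopy type, and then argue uniqueness of the merge tree over the two-interval barcode.

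The one substantive difference is in how the homotopy type of $\conf_2(X)$ is obtained. The paper simply cites \cite[Theorem~11.1]{farber2018configuration} for the count of circles, whereas you work it out by hand via the Abrams discretised model $D_2(X)$ and an Euler-characteristic computation. Your cell count and the simplification using $V=E+1$ and $\sum_v \eta(v)=2E$ are correct and recover exactly the formula in the statement. This buys a self-contained argument at the cost of importing the Abrams/Farley--Sabalka collapse (that $D_2(X)$ deformation retracts to a $1$-complex for a tree with a branch point), which, as you note, is external to the paper. The paper's citation to Farber is more economical but equally external. For the final equality $\fib=\MT^{-1}(\cellMTree)$, the paper appeals to \cite[Theorem~4.8]{curry2018fiber}; your direct combinatorial check that a two-interval generic barcode admits a unique cellular merge tree is equivalent and just as clean.
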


\begin{proof}
Theorem~\ref{theorem_homotopy_equivalence_configuration_spaces} tells us that~$\MT^{-1}(\cellMTree)$ is homotopy equivalent to~$\conf(\tree,\cellMTree)$, here~$\conf(\tree,\cellMTree)=\conf_2(X)$ is the configuration space of two points on~$X$, whose homotopy type is computed in~\cite[Theorem 11.1]{farber2018configuration}. The last statement follows from~\cite[Theorem 4.8]{curry2018fiber}:~$D$ is the barcode with two intervals, one finite contained by the other, infinite interval, and therefore~$\cellMTree$ is the only merge tree giving rise to the barcode~$D$.
\end{proof}

\begin{remark}
Let~$X$ be the star-like tree made of~$n$ edges joined at one vertex, and let~$D$ be a barcode as in Corollary~\ref{corollary_fiber_barcode_2_intervals}. Then the corollary tells us that~$\fib$ is homotopy equivalent to a wedge of~$n^2 -3n +1$ circles.

In \cite{mischaikow2021persistent}, the authors consider the same barcode~$D$ and the discrete tree~$Y$ obtained from~$X$ by inserting an additional vertex in the middle of each edge. Consider the space of functions~$f$ on the vertices and edges of~$Y$, where~$f(e) = \max(f(v),f(w))$ for any edge~$e = (v,w)$. In this scenario, the authors find that~$\fib$ is also homotopy equivalent to a wedge of~$n^2-3n+1$ circles \cite[Theorem~8.1]{mischaikow2021persistent}. This suggests that there may be a general relationship between the fiber of persistent homology of geometric trees and discrete trees with sufficiently fine triangulations.
\end{remark}

\subsection*{Acknowledgements}
DB is a member of the Centre for Topological Data Analysis, funded in part by EPSRC EP/R018472/1. JL was supported by an LMS Early Career Fellowship and in the last stage of the project by St John's College for his research assistantship role with Heather Harrington.

\bibliographystyle{abbrv}
\bibliography{refs}

\end{document}